\newtheorem{theorem}{Theorem}[section]
\newtheorem{corollary}[theorem]{Corollary}
\newtheorem{proposition}[theorem]{Proposition}
\newtheorem{lemma}[theorem]{Lemma}
\theoremstyle{definition}
\newtheorem{definition}{Definition}[section]
\newtheorem{remark}[theorem]{Remark}
\renewcommand{\qed}{\hfill $\blacksquare$}
\newcommand{\myemph}[1]{\emph{#1}}
\newcommand{\g}{\mathfrak{g}}
\newcommand{\h}{\mathfrak{h}}
\renewcommand{\a}{\mathfrak{a}}
\newcommand{\pp}{\mathfrak{p}}
\newcommand{\ad}{\operatorname{ad}}
\renewcommand{\sl}{\mathfrak{sl}}
\renewcommand{\t}{\mathfrak{t}}
\newcommand{\R}{\mathbb{R}}
\newcommand{\mf}{\mathfrak}
\newcommand{\tr}{\operatorname{tr}}
\newcommand{\C}{\mathbb{C}}
\begin{document}

\title{Distinguished Sets of Semi-simple Lie algebras}
\date{}
\maketitle

\vspace{-2cm}
\begin{center}
{\small {Xudong Chen\footnote[1]{Corresponding author. X. Chen is with the ECEE Department, University of Colorado, Boulder. {\em Email: xudong.chen@colorado.edu}.} 
\qquad \quad 
Bahman Gharesifard\footnote[2]{B. Gharesifard is with Department of Mathematics and Statistics, Queen's University, Canada. {\em Email: bahman.gharesifard@queensu.ca}.}}}
\end{center}

\begin{abstract}                
We call a finite, spanning set of a semi-simple real Lie algebra a {\em distinguished set} if it satisfies the following property: The Lie bracket of any two elements out of the set is, up to some constant, another element in the set; conversely, for any element in the set, there are two elements out of the set whose Lie bracket is, up to some constant, the given element. We show in the paper that every semi-simple real Lie algebra has a distinguished set.  
\end{abstract}

\noindent
{\bf Keywords:} {Simple Lie algebras; Real forms; Structure theory; Root systems.
}

\section{Introduction and Main Result}

Let~$\mf{g}$ be a semi-simple complex Lie algebra, and $\g_0$ be a real form of $\g$. We start with the following definition:

\begin{definition}\label{def:distinguishedset}
A finite, spanning subset $S=\{X_1,\ldots,X_m\}$ of $\g_0$ is {\bf distinguished} if for any pair $(X_i,X_j) \in S\times S$, there exists an $X_k\in S$ and a constant $\lambda\in \R$ such that 
\begin{equation}\label{eq:relation2}
[X_i,X_j] = \lambda X_k,
\end{equation} 
and conversely, for any $X_k\in A$, there exists a pair $(X_i,X_j) \in S \times S $ and a {\em nonzero} constant  $\lambda \in \R$ such that~\eqref{eq:relation2} holds. 
\end{definition}

Distinguished sets find applications in control and estimation of continuum ensembles of nonholonomic systems (see, for example,~\cite{chen2019structure,li2009ensemble}).  
We establish in the paper the following result: 

\begin{theorem}\label{thm:basisliealg}
Every semi-simple real Lie algebra has a distinguished set. 
\end{theorem}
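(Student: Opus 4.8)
The plan is to reduce the problem to the case of a real form in a good position with respect to a root-space decomposition, and then exhibit a distinguished set built out of (normalized) root vectors together with the coroots. Concretely, I would first recall that every semi-simple real Lie algebra $\g_0$ is a real form of a semi-simple complex Lie algebra $\g$, and that $\g$ decomposes as $\h \oplus \bigoplus_{\alpha \in \Phi} \g_\alpha$ for a Cartan subalgebra $\h$ and root system $\Phi$. The structure constants satisfy $[\g_\alpha,\g_\beta] \subseteq \g_{\alpha+\beta}$ (zero if $\alpha+\beta \notin \Phi \cup \{0\}$), $[E_\alpha, E_{-\alpha}]$ is a multiple of the coroot $H_\alpha \in \h$, and $[H, E_\alpha] = \alpha(H) E_\alpha$. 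This is exactly the kind of ``bracket of two basis elements is, up to a constant, a basis element'' behavior that Definition~\ref{def:distinguishedset} demands, so a Chevalley-type basis $\{H_{\alpha_i}\} \cup \{E_\alpha : \alpha \in \Phi\}$ is the natural first candidate over $\complex$.

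The main work is then twofold. \emph{First}, one must descend from $\complex$ to the given real form $\g_0$. Using the classification of real forms via Vogan diagrams / the theory of $\theta$-stable Cartan subalgebras, I would pick a Cartan involution $\theta$ and a maximally compact (or, when convenient, maximally split) $\theta$-stable Cartan subalgebra, so that complex conjugation $\sigma$ defining $\g_0$ interacts predictably with the root-space decomposition: $\sigma$ permutes the root spaces, and one can choose root vectors so that the real span of suitable combinations $E_\alpha \pm E_{\sigma\alpha}$ (and $i(E_\alpha \mp E_{\sigma\alpha})$), together with the real/imaginary parts of the coroots, lands in $\g_0$ and still has the property that the bracket of any two is a real multiple of a third. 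This is bookkeeping with $\sigma$-orbits on $\Phi$; the delicate point is that mixing real and imaginary combinations can introduce factors of $i$, so one has to be careful that all resulting structure constants stay real — this is where I expect the real obstacle to be. \emph{Second}, one must verify the converse ``surjectivity'' condition: every element of the candidate set $S$ is (up to a nonzero constant) a bracket of two elements of $S$. For the root vectors $E_\alpha$ this is automatic when one can write $\alpha = \beta + \gamma$ with $\beta,\gamma,\alpha \in \Phi$; the problematic elements are the Cartan (coroot) elements and those root vectors $E_\alpha$ for which $\alpha$ is not a sum of two roots (e.g. simple roots in rank considerations, or the highest root is fine but simple roots need care). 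For a simple root $\alpha_i$, one uses $[E_{\alpha_i - \alpha_j}, E_{\alpha_j}] = c\, E_{\alpha_i}$ whenever $\alpha_i - \alpha_j$ is a root; when no such $\alpha_j$ exists one falls back on $[E_{-\beta}, E_{\alpha_i + \beta}]$. The coroot $H_{\alpha}$ is recovered as $[E_\alpha, E_{-\alpha}]$ up to a nonzero scalar, which is in $S$ since both $\pm\alpha \in \Phi$.

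So the outline of the argument I would write is: (1) reduce to a real form $\g_0 \subseteq \g$ with a fixed $\theta$-stable Cartan subalgebra and compatible root-space decomposition; (2) define $S$ as the union of a real basis of $\h \cap \g_0$ consisting of (normalized) coroots and their $\sigma$-symmetrizations, together with the real combinations of root vectors $E_\alpha$ adapted to the $\sigma$-action, rescaled so that all structure constants are real; (3) check the forward condition \eqref{eq:relation2} by running through the cases $[H,H']$, $[H,E]$, $[E_\alpha,E_\beta]$ according to whether $\alpha+\beta$ is a root, $0$, or neither, keeping track of how $\sigma$-symmetrization affects each case; (4) check the converse by exhibiting, for each generator of $S$, an explicit pair whose bracket is a nonzero multiple of it, handling the coroots via $[E_\alpha,E_{-\alpha}]$ and the ``non-decomposable'' root vectors via a short case analysis on the root system. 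The hardest step is (2)–(3): guaranteeing that a single choice of normalization of the root vectors makes \emph{every} bracket — including the ones crossing between different $\sigma$-orbits and between real and imaginary parts — come out to a \emph{real} scalar multiple of an element of $S$, rather than merely a complex one. I would expect to handle this by working orbit-by-orbit on $\Phi/\sigma$ and invoking the standard normalization in which the Chevalley structure constants $N_{\alpha,\beta}$ are integers, then tracking the (finitely many) sign and $i$ factors that the symmetrization introduces.
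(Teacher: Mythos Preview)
Your outline correctly identifies the starting point (Chevalley basis, $\theta$-stable Cartan, symmetrized root vectors $Y_\alpha, Z_\alpha$ lying in $\g_0$), and your concern about reality of structure constants is legitimate but not the main obstacle. The real gap is in step~(3): you assume that once the $E_\alpha$ are symmetrized into elements of $\g_0$, the bracket of two such elements is still a \emph{scalar multiple of a single} element of the candidate set. In general it is not. With $Y_\alpha := X_\alpha - \theta X_{-\alpha}$ and $Z_\alpha := \mathrm{i}(X_\alpha + \theta X_{-\alpha})$, one computes (for $\beta \notin \{\pm\alpha,\pm\theta\alpha\}$)
\[
[Y_\alpha, Y_\beta] \;=\; c_{\alpha,\beta}\,Y_{\alpha+\beta}\;-\;\sigma_\beta\,c_{\alpha,-\theta\beta}\,Y_{\alpha-\theta\beta},
\]
which is a genuine two-term linear combination whenever \emph{both} $\alpha+\beta$ and $\alpha-\theta\beta$ are roots. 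Such ``matched pairs'' $(\alpha,\beta)$ exist in $B_n$, $C_n$, $F_4$, $G_2$ (with only imaginary roots) and in $A_{2n-1}$, $D_n$, $E_6$ (with complex roots), so the naive set $S_0 = \bigcup_\alpha \{H_\alpha - H_{\theta\alpha},\, \mathrm{i}(H_\alpha+H_{\theta\alpha}),\, Y_\alpha,\, Z_\alpha\}$ fails the forward condition there. No rescaling of root vectors can fix this, since the two terms live in distinct root spaces.

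The paper's remedy is to \emph{enlarge} the set: one adjoins the combinations $Y_{\alpha+\beta}\pm Y_{\alpha-\theta\beta}$, $Z_{\alpha+\beta}\pm Z_{\alpha-\theta\beta}$ coming from matched pairs, obtaining $S_1 := [S_0,S_0]$, and then checks whether $S_1$ is closed. This already requires a nontrivial structural analysis (the induced orthogonal pairs $(\alpha+\beta,\alpha-\theta\beta)$ are always strongly orthogonal, and one must control how their sums interact). Even $S_1$ is not always enough: for $F_4$ and for $E_6$ with complex roots one must pass to $S_2$, and for $G_2$ a bespoke set $S^*$ is needed. The substance of the proof is the case-by-case verification that this iteration terminates, not the normalization issue you flagged.
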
 

\subsection{Distinguished sets for complex Lie algebras}
 Denote by $\ad_X(\cdot) := [X,\cdot]$ the adjoint action, and $B(X, Y) := \tr(\ad_X\ad_Y)$ the Killing form. Let $\h$ be a Cartan subalgebra of~$\g$, and $\Delta$ 
 be the set of roots. For each $\alpha\in \Delta$, we let $h_\alpha\in \h$ be such that 
$\alpha(H) =  B(h_\alpha, H)$. Denote by $\langle \alpha, \beta\rangle := B(h_\alpha, h_\beta)$, which is an inner-product defined over the $\R$-span of $\Delta$. Define $|\alpha|:= \sqrt{\langle \alpha, \alpha \rangle}$ and $H_\alpha :=  \nicefrac{2h_\alpha}{ |\alpha|^2 }$. 
For a root $\alpha\in \Delta$, let $\g_\alpha$ be the corresponding root space over $\C$.

Suppose, for the moment, that one aims to obtain a distinguished set for the complex Lie algebra $\mf{g}$; then, such a set can easily be obtained from the Chevalley basis~\cite[Ch. VII]{humphreys2012introduction}, which we recall below: 

\begin{lemma}\label{theorem:rootspacedecomp}
 There are $X_\alpha\in \g_\alpha$, for $\alpha\in \Delta$, such that the followings hold:
 \begin{enumerate}
 \item For any $\alpha\in \Delta$, we have $[X_\alpha, X_{-\alpha}] = H_\alpha$ with $B(X_\alpha, X_{-\alpha}) = \nicefrac{2}{|\alpha|^2}$.
 \item For any two non-proportional roots $\alpha, \beta$, we let $\beta + n\alpha$, with $-q \le n \le p$, be the $\alpha$-string containing $\beta$. Then,  
 $$ 
 [X_\alpha, X_\beta] = \left\{
 \begin{array}{ll}
 c_{\alpha, \beta} X_{\alpha  + \beta} & \mbox{if } \alpha + \beta \in \Delta, \\
 0 & \mbox{otherwise}.
 \end{array}
 \right.
 $$
 where  $c_{\alpha, \beta}\in \mathbb{Z}$ with $c_{\alpha,\beta} = -c_{-\alpha, -\beta}$ and $c^2_{\alpha,\beta} = (q + 1)^2$. 
 \end{enumerate}\,
\end{lemma}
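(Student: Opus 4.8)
The statement is the classical theorem on the existence of a Chevalley basis, so the plan is to follow the standard route, which I sketch here. Part~1 is nearly automatic: the Killing form restricts to a perfect pairing $\g_\alpha\times\g_{-\alpha}\to\C$, so, since $\dim\g_{\pm\alpha}=1$, one may pick any nonzero $X_\alpha\in\g_\alpha$ and then the unique $X_{-\alpha}\in\g_{-\alpha}$ with $B(X_\alpha,X_{-\alpha})=\nicefrac{2}{|\alpha|^2}$; the standard identity $[X,Y]=B(X,Y)\,h_\alpha$ for $X\in\g_\alpha$ and $Y\in\g_{-\alpha}$ (a consequence of invariance of $B$ and nondegeneracy of $B|_\h$) then gives $[X_\alpha,X_{-\alpha}]=\nicefrac{2h_\alpha}{|\alpha|^2}=H_\alpha$. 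All the content is in arranging the remaining brackets $[X_\alpha,X_\beta]$ to have the prescribed integer structure constants.

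For that I would use the \emph{Chevalley involution}. Fix $\mathfrak{sl}_2$-triples $(e_i,f_i,h_i)$ attached to the simple roots, with $e_i\in\g_{\alpha_i}$, $f_i\in\g_{-\alpha_i}$, $[e_i,f_i]=h_i$, and define $\theta$ on these generators by $\theta(e_i)=-f_i$, $\theta(f_i)=-e_i$, $\theta(h_i)=-h_i$. One checks that $\theta$ respects the Serre relations and squares to the identity on generators, so it extends to an involutive automorphism of $\g$ with $\theta|_\h=-\mathrm{id}$, hence $\theta(\g_\alpha)=\g_{-\alpha}$ for every $\alpha$. Writing $\theta(X_\alpha)=\lambda_\alpha X_{-\alpha}$ with $\lambda_\alpha\neq0$, I would then rescale $X_\alpha\mapsto t_\alpha X_\alpha$ and $X_{-\alpha}\mapsto t_\alpha^{-1}X_{-\alpha}$ on each pair $\{\alpha,-\alpha\}$ with $t_\alpha^2=-\lambda_\alpha^{-1}$; over $\C$ this is possible, it leaves part~1 untouched, and it makes $\theta(X_\alpha)=-X_{-\alpha}$ for all $\alpha$ (the relation $\theta(X_{-\alpha})=-X_\alpha$ then being forced by $\theta^2=\mathrm{id}$). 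Writing $[X_\alpha,X_\beta]=c_{\alpha,\beta}X_{\alpha+\beta}$ when $\alpha+\beta\in\Delta$, two facts are then immediate: $[X_\alpha,X_\beta]=0$ when $\alpha+\beta\notin\Delta$ (since $[\g_\alpha,\g_\beta]\subseteq\g_{\alpha+\beta}$ and $\alpha+\beta\neq0$ for non-proportional roots), and, applying $\theta$ to $[X_\alpha,X_\beta]$, $c_{-\alpha,-\beta}=-c_{\alpha,\beta}$.

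The crux is to show $c_{\alpha,\beta}^2=(q+1)^2$, for which I would combine three ingredients. (i) For fixed non-proportional $\alpha,\beta$ with $\alpha+\beta\in\Delta$, the $\alpha$-string module $M=\bigoplus_{n=-q}^{p}\g_{\beta+n\alpha}$ is a module over $\mathfrak{sl}_2=\spann\{X_\alpha,X_{-\alpha},H_\alpha\}$; its weight spaces are one-dimensional with $\ad H_\alpha$-weights $(q-p)+2n$ running through $-(p+q),\dots,p+q$ in steps of two, so $M$ is the irreducible module of highest weight $p+q$, and reading off $\ad X_{\pm\alpha}$ in a standard basis yields $c_{\alpha,\beta}\,c_{-\alpha,\alpha+\beta}=p(q+1)$. (ii) The Jacobi identity applied to a triple of roots summing to zero gives the cyclic relation $c_{\mu,\nu}/|\rho|^2 = c_{\nu,\rho}/|\mu|^2 = c_{\rho,\mu}/|\nu|^2$ whenever $\mu+\nu+\rho=0$ (using $[X_\delta,X_{-\delta}]=\nicefrac{2h_\delta}{|\delta|^2}$ and linearity of $\delta\mapsto h_\delta$); applying this to $(-\alpha,\alpha+\beta,-\beta)$ and combining with $c_{-\alpha,-\beta}=-c_{\alpha,\beta}$ and antisymmetry of the bracket gives $c_{-\alpha,\alpha+\beta}=c_{\alpha,\beta}\,\nicefrac{|\beta|^2}{|\alpha+\beta|^2}$, hence $c_{\alpha,\beta}^2=p(q+1)\,\nicefrac{|\alpha+\beta|^2}{|\beta|^2}$. (iii) The elementary root-system identity $p\,|\alpha+\beta|^2=(q+1)\,|\beta|^2$, verified by inspecting the rank-two subsystem ($A_1\times A_1$, $A_2$, $B_2$ or $G_2$) generated by $\alpha,\beta$, then collapses this to $c_{\alpha,\beta}^2=(q+1)^2$. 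Integrality is now automatic: $c_{\alpha,\beta}\in\C$ with $c_{\alpha,\beta}^2=(q+1)^2$ and $q\in\mathbb{Z}_{\ge0}$ forces $c_{\alpha,\beta}=\pm(q+1)\in\mathbb{Z}$.

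I expect step~(iii) to be the main obstacle, together with getting the normalizations in (i)--(iii) to match: the Killing-form scaling imposed in part~1 has to be chosen precisely so that the root-length factors cancel and leave exactly $(q+1)^2$, which requires the $\mathfrak{sl}_2$-module computation, the Jacobi cyclic identity, and the root-string length identity to align. A secondary, largely bookkeeping, difficulty is establishing the Chevalley involution $\theta$ with the stated properties and carrying out the rescaling consistently over all pairs $\{\alpha,-\alpha\}$ without disturbing part~1.
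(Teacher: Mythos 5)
Your outline is correct: it is the standard Chevalley-basis argument (the $\mathfrak{sl}_2$-module computation on the $\alpha$-string giving $c_{\alpha,\beta}\,c_{-\alpha,\alpha+\beta}=p(q+1)$, the cyclic Jacobi identity for three roots summing to zero, and the rank-two string-length/root-length relation, together with a Chevalley involution and a rescaling that preserves the Killing-form normalization of part~1), and these ingredients do fit together as you claim. The paper itself offers no proof of this lemma—it simply recalls the Chevalley basis from Humphreys, Ch.~VII—so your sketch is precisely the classical proof the paper is invoking.
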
 


For any $\alpha, \beta\in \Delta$, $[H_\alpha, X_\beta] = \nicefrac{2\langle \alpha, \beta\rangle}{|\alpha |^2} X_\beta$ and $\nicefrac{2\langle \alpha, \beta\rangle}{|\alpha |^2}\in \mathbb{Z}$. 
It follows from Lemma~\ref{theorem:rootspacedecomp} that $S:= \{ H_\alpha, X_\alpha, X_{-\alpha} \mid \alpha\in\Delta \}$ is a distinguished set of~$\g$.  

The complex Lie algebra $\g$ can also be viewed as a Lie algebra over $\R$, which we denote by $\g^{\R}$. 
We call such a real Lie algebra {\em complex}. Since $\nicefrac{2\langle \alpha, \beta\rangle}{|\alpha|^2}$ and $c_{\alpha, \beta}$ are integers, the set $S\cup \mathrm{i} S$ is a distinguished set of $\g^{\R}$.   
Also, note that if $\g_0$ is a {\em split real form} of $\g$ (i.e.,  the $\R$-span of $S$), then $S$ is a distinguished set of $\g_0$.   
Thus, for the remainder of the paper, we will consider only the case where $\g_0$ is simple,  noncomplex, and is not a split real form of~$\g$. 

 


\subsection{Preliminary results from structure theory}


Let $\theta$ be a Cartan involution of~$\g_0$. Let $\g_0=\mathfrak{k}_0\oplus \pp_0 $ where $\mathfrak{k}_0$ (resp. $\pp_0$) is the $+1$-eigenspace (resp. $-1$-eigenspace) of $\theta$. Let $\mathfrak{k}$ (resp.~$\pp$) be the complexification of $\mathfrak{k}_0$ (resp. $\pp_0$). 
Let $\h_0$ be a $\theta$-stable Cartan subalgebra of~$\g_0$. Decompose $ \h_0=\a_0\oplus \t_0 $, where $ \a_0 $ and $ \t_0 $ are subspaces of $ \pp_0 $ and $ \mathfrak{k}_0 $, respectively.  
All roots of $\Delta$ take real value on $\a_0\oplus i\t_0 $.  
A root is said to be \emph{imaginary} (resp. \emph{real}) if it takes imaginary (resp. real) value on~$\h_0$, and hence vanishes over $\a_0$ (resp. $\t_0$). If a root is neither real nor imaginary, then it is said to be {\em complex}.
Further, $\h_0$ is said to be {\em maximally compact} if there is no real root in $\Delta$. One can obtain such $\h_0$ by Cayley transformation~\cite[Sec.~VI-7]{AWK:02}. We assume in the sequel that~$\h_0$ is maximally compact.

 
The Cartan involution permutes roots: for any root $\alpha$, let $(\theta\alpha)(H) := \alpha (\theta H)$ for any $H\in \h$. Note that $(\theta\alpha) (H) = B(H_\alpha, \theta H) = B(\theta H_\alpha,  H)$ and, hence, $H_{\theta \alpha} = \theta H_\alpha$.    
Also, note that if $\alpha$ is imaginary, then $\theta \alpha = \alpha$. Thus, $\g_\alpha$ is $\theta$-stable. Since $\g_\alpha$ is one dimensional, it is contained in either $\mathfrak{k}$ or $\pp$. An \emph{imaginary root} $ \alpha$ is said to be \myemph{compact} (resp. {\em non-compact}) if $ \g_{\alpha} \subseteq \mathfrak{k} $ (resp. $ \g_{\alpha} \subseteq\pp $).  In particular, if  $\alpha$  is compact (resp. non-compact), then $\theta X_\alpha = X_\alpha$ (resp. $\theta X_\alpha = -X_{\alpha}$). Furthermore, we can assume that  
$\theta X_\alpha = X_{\theta \alpha}$ for any complex root~$\alpha$~\cite[Thm 6.88]{AWK:02}. 
Now, for any root $\alpha$, we define $\sigma_\alpha\in \{1,-1\}$ such that $\theta X_\alpha = \sigma_\alpha X_{\theta \alpha}$. Then,  
 \begin{equation}\label{eq:cond2}
 \sigma_\alpha = \left\{
 \begin{array}{ll}
 1  & \mbox{ if } \alpha \mbox{ is  complex or compact imaginary}, \\ 
-1 & \mbox{ if } \alpha \mbox{ is  non-compact imaginary}. 
 \end{array}
 \right. 
  \end{equation} 
Note that $\sigma_\alpha = \sigma_{-\alpha} = \sigma_{\theta \alpha} = \sigma_{-\theta\alpha}$ for any root $\alpha$. Also, note that 
$$
c_{\theta\alpha, \theta\beta} X_{\theta (\alpha + \beta)} = [X_{\theta\alpha}, X_{\theta\beta}] = \sigma_\alpha\sigma_\beta \theta[X_\alpha, X_\beta] =  \sigma_\alpha\sigma_\beta c_{\alpha, \beta} \theta X_{\alpha + \beta},
$$
and, hence, 
$
c_{\theta\alpha,\theta \beta } = \sigma_\alpha\sigma_\beta \sigma_{\alpha + \beta} c_{\alpha, \beta}$. 

We provide below a few preliminary results that are necessary for constructing the distinguished sets of semi-simple real Lie algebras. We first have the following fact:

\begin{lemma}\label{prop:1}
 There is a re-scaling of the $X_\alpha\in \g_\alpha$, for $\alpha\in \Delta$, by complex numbers such that the following elements: 
$$Y_\alpha:= X_{\alpha} - \theta X_{-\alpha}, \quad Z_\alpha:= {\rm  i} (X_\alpha + \theta X_{- \alpha})$$  belong to $\g_0$ for all $\alpha \in \Delta$. 
  \end{lemma}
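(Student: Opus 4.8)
The plan is to detect membership in $\g_0$ through the conjugation $\tau$ of $\g$ with respect to the real form $\g_0$ (so that $W\in\g_0$ iff $\tau W=W$) and to play it against $\theta$. Since $\h_0$ is $\theta$-stable, $\theta$ (extended $\C$-linearly to $\g$) commutes with $\tau$, so $\tau_0:=\tau\theta=\theta\tau$ is again a conjugate-linear involution of $\g$; its fixed-point set is $\mathfrak{u}_0:=\mathfrak{k}_0\oplus\i\pp_0$, on which $B$ is negative definite, so $\mathfrak{u}_0$ is a compact real form and $\tau_0$ its associated conjugation. Because $\h_0=\a_0\oplus\t_0$ is $\theta$-stable and all roots are real on $\a_0\oplus\i\t_0$, the space $\t_0\oplus\i\a_0=\mathfrak{u}_0\cap\h$ is a maximal torus of $\mathfrak{u}_0$, and the standard compatibility of a compact real form with one of its maximal tori gives $\tau_0\g_\alpha=\g_{-\alpha}$ for every $\alpha\in\Delta$. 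I would first assemble these facts from the structure theory in~\cite[Ch.~VI]{AWK:02}.

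Granting them, the lemma reduces to the normalization $\tau_0X_\alpha=-X_{-\alpha}$ for all $\alpha$. Indeed, once this holds, then since $\tau=\theta\tau_0$, $\theta\tau_0=\tau_0\theta$ and $\theta^2=\mathrm{id}$ one computes $\tau Y_\alpha=\theta\tau_0(X_\alpha-\theta X_{-\alpha})=\theta(-X_{-\alpha}+\theta X_\alpha)=X_\alpha-\theta X_{-\alpha}=Y_\alpha$, and likewise $\tau Z_\alpha=Z_\alpha$ (using $\tau_0(\i W)=-\i\tau_0 W$), so $Y_\alpha,Z_\alpha\in\g_0$.

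To secure the normalization, write $\tau_0X_\alpha=a_\alpha X_{-\alpha}$ with $a_\alpha\in\C^\times$. The Hermitian form $X\mapsto -B(X,\tau_0X)$ attached to the compact real form $\mathfrak{u}_0$ is positive definite, and evaluating it at $X_\alpha$ together with $B(X_\alpha,X_{-\alpha})=\nicefrac{2}{|\alpha|^2}>0$ from Lemma~\ref{theorem:rootspacedecomp} forces $a_\alpha$ real and negative; applying $\tau_0$ twice gives $a_\alpha a_{-\alpha}=1$. Then $\tilde X_\alpha:=|a_\alpha|^{-1/2}X_\alpha$ satisfies $\tau_0\tilde X_\alpha=|a_\alpha|^{-1/2}a_\alpha X_{-\alpha}=-|a_\alpha|^{1/2}X_{-\alpha}=-|a_{-\alpha}|^{-1/2}X_{-\alpha}=-\tilde X_{-\alpha}$, which is the required rescaling. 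One checks in passing that it leaves intact $[\tilde X_\alpha,\tilde X_{-\alpha}]=H_\alpha$, $B(\tilde X_\alpha,\tilde X_{-\alpha})=\nicefrac{2}{|\alpha|^2}$, the integrality of the $c_{\alpha,\beta}$, and $\theta\tilde X_\alpha=\sigma_\alpha\tilde X_{\theta\alpha}$, because $\alpha\mapsto|a_\alpha|^{-1/2}$ is multiplicative and $\theta$-invariant — consequences of $a_{\alpha+\beta}=-a_\alpha a_\beta$ (apply $\tau_0$ to $[X_\alpha,X_\beta]=c_{\alpha,\beta}X_{\alpha+\beta}$) and $a_{\theta\alpha}=a_\alpha$ (apply $\tau_0\theta=\theta\tau_0$ to $\theta X_\alpha=\sigma_\alpha X_{\theta\alpha}$) — so no later use of the basis is disturbed.

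The one delicate step is the first: verifying that $\theta$ and $\tau$ commute, that $\mathfrak{u}_0=\mathfrak{k}_0\oplus\i\pp_0$ genuinely is a compact real form with conjugation $\tau_0=\theta\tau$, and — the crucial point — that $\tau_0$ carries $\g_\alpha$ onto $\g_{-\alpha}$, which is exactly where the assumptions that $\h_0$ is $\theta$-stable and maximally compact are used. Everything after that is routine.
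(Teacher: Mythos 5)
Your argument is correct and is essentially the paper's own proof in different packaging: your relation $\tau_0 X_\alpha = a_\alpha X_{-\alpha}$ with $a_\alpha<0$ is exactly the paper's $\theta\overline{X_\alpha} = -r_\alpha X_{-\alpha}$ (with $a_\alpha=-r_\alpha$), your positive Hermitian form $-B(X,\tau_0 X)$ is its $B_\theta$, the rescaling by $|a_\alpha|^{-1/2}$ is its $\sqrt{r_\alpha}$, and your compatibility checks $a_{\theta\alpha}=a_\alpha$ and $a_{\alpha+\beta}=-a_\alpha a_\beta$ mirror its verifications $r_{\theta\alpha}=r_\alpha$ and $r_\alpha r_\beta=r_{\alpha+\beta}$. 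One small correction of attribution: $\theta$ commutes with $\tau$ simply because $\theta$ is the $\C$-linear extension of an involution of $\g_0$, not because $\h_0$ is $\theta$-stable (the $\theta$-stability of $\h_0$ and the reality of roots on $\a_0\oplus\mathrm{i}\t_0$ are what you need for $\tau_0\g_\alpha=\g_{-\alpha}$, just as in the paper's direct computation).
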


 \begin{proof}
Let $X_\alpha$, for $\alpha\in\Delta$, satisfy items of Lemma~\ref{theorem:rootspacedecomp}. 
We show that $\theta \overline{X_\alpha}\in \g_{-\alpha}$. 
For any $H\in \h$, we have $[H, X_{\alpha}] = \alpha(H) X_{\alpha}$. Taking complex conjugate and applying $\theta$, we obtain that  
$
[\theta \overline{H}, \theta \overline{X_{\alpha}}] = \overline{\alpha(H)} \theta \overline{X_{\alpha}}
$. Since $\alpha$ takes real value on $\a_0 + \mathrm{i}\t_0$, it follows that $\overline{\alpha(H)} = -\alpha(\theta \overline{H})$ and, hence, 
$
[\theta \overline{H}, \theta \overline{X_{\alpha}}] = -\alpha(\theta \overline{H}) \theta \overline{X_{\alpha}}
$. Because $H\mapsto \theta \overline{H}$ is an automorphism of $\h$, 
$[H, \theta \overline{X_{\alpha}}] = -\alpha(H) \theta\overline{X_{\alpha}}$ for all $H\in \h$. 

For any $X, Y\in \g_0$, we define $B_\theta(X, Y):= -B(X, \theta Y)$, which is an inner-product on $\g_0$. We further extend $B_\theta$ to $\g$ by  $B_\theta(X, Y):= -B(X, \theta \overline{Y})$ for $X, Y\in \g$. In particular, $B(X_{\alpha}, \theta \overline{X_\alpha}) < 0$. 
On the other hand,  from the first item of Lemma~\ref{theorem:rootspacedecomp}, 
we have $B(X_\alpha, X_{-\alpha}) = \nicefrac{2}{|\alpha |^2} > 0$. Since $\g_{-\alpha}$ is one-dimensional (over $\C$) and both $\theta\overline{X_{\alpha}}$ and $X_{-\alpha}$ belong to $\g_{-\alpha}$,  it follows that $\theta\overline{X_{\alpha}} = -r_\alpha X_{-\alpha}$ for some  $r_\alpha > 0$. 
Let  
$X'_{\alpha} := \sqrt{r_\alpha}^{-1} X_{\alpha}$ 
and $X'_{-\alpha} := \sqrt{r_\alpha} X_{-\alpha}$. Thus, $\theta \overline{X'_{\alpha}} = -X'_{-\alpha}$. 
We next note that 
$$
-\frac{2r_\alpha}{|\alpha |^2} = B(X_\alpha, \theta \overline{X_\alpha}) = B(X_\alpha, \overline{X_{\theta\alpha}}) = B(\theta X_\alpha, \theta \overline{X_{\theta\alpha}}) = -\frac{2r_{\theta\alpha}}{|\theta\alpha |^2}. 
$$ 
It follows that $r_\alpha = r_{\theta\alpha}$ and, hence, $\theta X'_\alpha = \sqrt{\nicefrac{r_\alpha}{r_{\theta\alpha}}}\sigma_\alpha X'_{\theta \alpha}= \sigma_\alpha X'_{\theta \alpha}$.   

Further, note that the $X'_\alpha$'s satisfy the items of Lemma~\ref{theorem:rootspacedecomp}.  
To see this, it suffices to show that $\nicefrac{r_\alpha r_\beta}{r_{\alpha + \beta}} = 1$ for $\alpha, \beta, \alpha + \beta$ roots in $\Delta$.  
We have 
\begin{multline*}
\sqrt{\frac{r_\alpha r_\beta}{r_{\alpha + \beta}}} c_{\alpha, \beta}X'_{\alpha + \beta} = [X'_{\alpha}, X'_\beta] = [-\theta\overline{X'_{-\alpha}}, -\theta\overline{X'_{-\beta}}]  =\\
\theta\overline{[X'_{-\alpha}, X'_{-\beta}]}  = - \sqrt{\frac{r_{\alpha + \beta}}{r_\alpha r_\beta}}\, \overline{c_{-\alpha, -\beta}}\, \theta \overline{X'_{-\alpha-\beta}} =- \sqrt{\frac{r_{\alpha + \beta}}{r_\alpha r_\beta}}\, \overline{c_{-\alpha, -\beta}}\, X'_{\alpha +\beta}.
\end{multline*}
Since $c_{\alpha, \beta}$ is real and $c_{\alpha,\beta} = - c_{-\alpha,-\beta}$,  it follows that $r_\alpha r_\beta /r_{\alpha + \beta} = 1$. Thus,   
the $Y_\alpha$ and $Z_\alpha$ (with $X_\alpha$ replace with $X'_\alpha$) belong to $\g_0$.  
\end{proof}

We next have the following fact which follows from~\cite[Ch.~VI]{AWK:02}:


\begin{lemma}\label{lem:introH}
Let $\g_0$ be simple and noncomplex. Suppose that $\g_0$ is not a split real form of $\g$ and that 
$\Delta$ contains a complex root; then, the underlying root system can only be $A_n$ for $n$ odd, $D_n$, or $E_6$.  
\end{lemma}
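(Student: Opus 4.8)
The plan is to recall the standard correspondence between the real forms and the "Vogan diagram" / involution data on the Dynkin diagram, and to extract from it precisely when a complex root can occur. Since $\h_0$ is maximally compact (no real roots), the Cartan involution $\theta$ acts on the root system $\Delta$ as an automorphism preserving a choice of positive system, and hence induces an automorphism $\tau$ of the Dynkin diagram of the underlying irreducible root system. A root $\alpha$ fails to be imaginary exactly when $\theta\alpha\neq\alpha$; by a connectedness/support argument, $\Delta$ contains a complex root if and only if $\tau$ is \emph{nontrivial}. So the lemma reduces to the purely combinatorial fact: among the connected Dynkin diagrams, the only ones admitting a nontrivial diagram automorphism are $A_n$ ($n\ge 2$), $D_n$, $E_6$, (and the triality automorphisms of $D_4$, which are a special case of $D_n$). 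This already rules out $B_n, C_n, E_7, E_8, F_4, G_2$, and $A_1$.

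The next step is to eliminate $A_n$ for $n$ even. First I would argue at the level of diagram automorphisms: for $A_n$ the nontrivial automorphism $\tau$ is the flip $e_i\mapsto e_{n+1-i}$, which for $n$ even has no fixed simple root but fixes the midpoint edge. I would then invoke the structure theory of real forms (Vogan diagrams) for $\mathfrak{sl}$: the real forms of $A_n$ coming from a nontrivial diagram involution are $\mathfrak{su}(p,q)$ with $p+q=n+1$ and the ones of $\mathfrak{sl}$-type; a case check (using the classification in~\cite[Ch.~VI]{AWK:02}) shows that when $n$ is even the only simple noncomplex real forms of $A_n$ are $\mathfrak{sl}(n+1,\mathbb{R})$ (the split form, excluded by hypothesis), $\mathfrak{su}(p,q)$, and $\mathfrak{su}^\ast$ / quaternionic forms, and among these the ones whose $\theta$ genuinely acts by the diagram flip force $n$ to be odd, while for $n$ even the relevant $\mathfrak{su}(p,q)$ has $\theta$ acting trivially on the diagram (so no complex roots, only imaginary ones). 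Hence a complex root in type $A_n$ forces $n$ odd.

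The cleanest way to organize steps two and three is probably to avoid the full real-form classification and instead argue directly: suppose $\tau$ is a nontrivial diagram automorphism of order $2$ with no fixed vertex (which is the $A_{2k}$ flip, up to relabelling), and derive a contradiction with $\h_0$ being maximally compact — i.e., show that in that situation there must be a real root, contradicting maximal compactness. Concretely, if $\tau$ (equivalently $\theta$ on $\Delta$) has no fixed \emph{simple} root, then one can produce a root $\alpha$ with $\theta\alpha=-\alpha$ by taking a suitable sum along a $\tau$-orbit joined through the "center" of the diagram, and such an $\alpha$ is real, the contradiction we want. This is the step I expect to be the main obstacle: pinning down exactly which $\tau$-orbit sum yields a root fixed by $-\theta$ requires a short case analysis (it is transparent for $A_2$, where $\alpha_1+\alpha_2$ is $\theta$-anti-invariant, and one bootstraps to $A_{2k}$), and one must be careful that in types $D_n$ and $E_6$ the automorphism \emph{does} fix a simple root, so no contradiction arises and those types survive — consistent with the statement. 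Once the combinatorics of "nontrivial involution with no fixed node $\Rightarrow$ real root exists" is nailed down, the lemma follows: complex root $\Rightarrow$ $\tau\neq\mathrm{id}$ $\Rightarrow$ diagram is $A_n$, $D_n$, or $E_6$; and if $A_n$ then $\tau$ must fix a node (else maximal compactness fails), which happens only for $n$ odd.
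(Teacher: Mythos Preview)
Your first two paragraphs are essentially the paper's argument: a complex root forces the diagram automorphism induced by $\theta$ to be nontrivial, hence the type is $A_n$, $D_n$, or $E_6$; for $A_n$ with $n$ even one then invokes the classification in \cite[Ch.~VI]{AWK:02} to see that the only real form with nontrivial diagram automorphism is $\sl(n+1,\R)$, which is split and therefore excluded by hypothesis. Your bookkeeping in the second paragraph is slightly off, though: $\su(p,q)$ always has \emph{trivial} diagram automorphism (it is encoded purely by painted vertices), while the flip appears for $\sl(n+1,\R)$ (all $n\ge2$) and for $\su^*(n+1)=\sl\bigl((n+1)/2,\bH\bigr)$ (which exists only for $n$ odd). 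So for $n$ even the flip already pins down $\sl(n+1,\R)$ directly, exactly as the paper states in one line.

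The alternative route in your third paragraph does not work. In the Vogan-diagram setup $\theta$ preserves the chosen positive system, so for every positive root $\alpha$ one has $\theta\alpha>0$; thus $\theta\alpha=-\alpha$ is impossible and there are \emph{never} real roots in this picture, irrespective of whether $\tau$ fixes a simple root (indeed, ``no real roots'' is precisely what maximal compactness means). Your $A_2$ test case already exhibits the slip: $\theta(\alpha_1+\alpha_2)=\alpha_2+\alpha_1$, so the highest root is $\theta$-\emph{invariant} (imaginary), not $\theta$-anti-invariant. Excluding $A_{2k}$ genuinely requires identifying the resulting real form as split, and that identification is exactly what the classification provides; there is no shortcut via ``produce a real root and contradict maximal compactness.''
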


\begin{proof}
The only connected Dynkin diagrams that admit nontrivial automorphisms are $A_n$, $D_n$ and $E_6$. But, if it is $A_n$ for $n$ even, then $\g_0$ has to be
$\sl(n + 1, \R)$ (see~\cite[Ch.~VI]{AWK:02}), which is a split real form of~$\g = \sl(n + 1, \C)$.  
\end{proof}

A consequence of Lemma~\ref{lem:introH} is then the following: 

\begin{proposition}\label{lem:introH1}
If $\alpha$ is complex, then $\alpha$ and $\theta\alpha$ are strongly orthogonal. 	
\end{proposition}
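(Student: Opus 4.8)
Recall that two roots $\alpha,\beta$ are strongly orthogonal if neither $\alpha+\beta$ nor $\alpha-\beta$ is a root (in particular they are orthogonal). So I must show that for a complex root $\alpha$, none of $\alpha+\theta\alpha$, $\alpha-\theta\alpha$ is a root. My plan is to run through the possibilities dictated by Lemma~\ref{lem:introH}: since $\Delta$ contains a complex root, the root system is $A_n$ ($n$ odd), $D_n$, or $E_6$, all of which are simply laced. In a simply laced system every root has the same length, so for two distinct non-proportional roots $\gamma,\delta$ the angle between them is $60^\circ$, $90^\circ$, or $120^\circ$, and $\langle\gamma,\delta\rangle/|\gamma|^2 \in\{1,0,-1\}$ accordingly; moreover $\gamma+\delta$ is a root iff the angle is $120^\circ$ and $\gamma-\delta$ is a root iff the angle is $60^\circ$.

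First I would observe that $\theta$ is an isometry of the root space (since $H_{\theta\alpha}=\theta H_\alpha$ and $\theta$ preserves the Killing form, so $\langle\theta\alpha,\theta\beta\rangle=\langle\alpha,\beta\rangle$), hence $|\theta\alpha|=|\alpha|$. Since $\alpha$ is complex, $\theta\alpha\neq\pm\alpha$, so $\alpha$ and $\theta\alpha$ are genuinely non-proportional and the trichotomy above applies. It remains to exclude the $60^\circ$ and $120^\circ$ cases. The key input is the strong constraint coming from the automorphism: the Cartan involution $\theta$ acts on $\Delta$ as a diagram automorphism (up to a Weyl-group element), and for the three diagrams in question this automorphism is an involution with the special feature that a node and its image are never adjacent in the Dynkin diagram. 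I would make this precise: for $A_n$ ($n$ odd), $D_n$ and $E_6$, the nontrivial diagram automorphism $\tau$ satisfies: $\tau$ fixes no simple root or, more to the point, $\langle\alpha_i,\tau\alpha_i\rangle\ge 0$ for every simple root, i.e. adjacent simple roots are never swapped by $\tau$. This is a finite check on the three diagrams.

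From the behaviour on simple roots I would propagate to all roots. The cleanest route: suppose for contradiction that $\alpha+\theta\alpha$ is a root (the $120^\circ$ case). Then $\langle\alpha,\theta\alpha\rangle = -|\alpha|^2/2 < 0$. Apply $\theta$: since $\theta$ is an involutive isometry, $\langle\theta\alpha,\alpha\rangle$ is unchanged, so this is self-consistent and does not immediately contradict. Instead I would use the element $\beta:=\alpha+\theta\alpha$, which is then $\theta$-fixed, hence an imaginary root (it vanishes on $\a_0$). Now compute $\langle\beta,\beta\rangle = 2|\alpha|^2 + 2\langle\alpha,\theta\alpha\rangle = 2|\alpha|^2 - |\alpha|^2 = |\alpha|^2$, so $\beta$ has the same length as $\alpha$, consistent in a simply laced system — again no contradiction from lengths alone. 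The real obstruction must come from the structure of $\theta$ as a diagram automorphism: in $A_n$ ($n$ odd), $D_n$, $E_6$, one checks directly that no root $\alpha$ satisfies $\langle\alpha,\theta\alpha\rangle\neq 0$ while being complex. Concretely, writing $\alpha$ in terms of an orthonormal-type coordinate model ($\varepsilon_i - \varepsilon_j$ for $A_n$; $\pm\varepsilon_i\pm\varepsilon_j$ for $D_n$) and $\theta$ as the explicit permutation/sign-change realizing the diagram automorphism, a short case analysis shows $\langle\alpha,\theta\alpha\rangle=0$ for every complex $\alpha$. The symmetric $\alpha-\theta\alpha$ case (the $60^\circ$ case, giving $\langle\alpha,\theta\alpha\rangle = +|\alpha|^2/2$) is handled identically, since $\theta\alpha-\alpha$ would then be a root and $\alpha-\theta\alpha$ is $\theta$-anti-fixed.

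**Main obstacle.** The conceptual content is entirely in pinning down how $\theta$ acts on $\Delta$: once one knows $\theta$ is (conjugate to) the specific diagram automorphism $\tau$ for $A_n$/$D_n$/$E_6$ and that $\tau$ never maps a root to one making a $60^\circ$ or $120^\circ$ angle with it, strong orthogonality is immediate. So the hard part is justifying the reduction "$\theta$ acts as $\tau$" with enough precision — in particular dealing with the Weyl-group ambiguity (one may need to conjugate $\h_0$, or equivalently replace $\theta$ by $w\circ\tau$ and check the conclusion is $w$-invariant, which it is since $w$ is an isometry preserving $\Delta$). I would lean on \cite[Ch.~VI]{AWK:02} for the identification and then carry out the three finite diagram checks explicitly.
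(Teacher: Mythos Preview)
Your approach is correct in outline but takes a different route from the paper. Both arguments begin with the same observation: by Lemma~\ref{lem:introH} the root system is simply laced, so orthogonality implies strong orthogonality, and for a \emph{simple} complex root $\alpha$ the node $\theta\alpha$ is never adjacent to $\alpha$ in the Dynkin (Vogan) diagram, hence $\langle\alpha,\theta\alpha\rangle=0$. Where you diverge is in extending from simple roots to arbitrary roots. You propose an explicit coordinate check in each of the three root systems, which works (and is essentially what the paper does later in Section~\ref{ssec:matchpairs} when listing the Cartan involutions), but is case-dependent and a bit laborious for $E_6$. The paper instead runs an induction on the height of $\alpha$: writing $\alpha=\beta+\gamma$ with $\beta,\gamma$ positive and $\langle\beta,\gamma\rangle=-1$, one of $\beta,\gamma$ must be complex (say $\beta$), so by induction $\langle\beta,\theta\beta\rangle=0$; a short computation then forces $\langle\alpha,\theta\alpha\rangle$ to be either $0$ or $2\langle\beta,\theta\gamma\rangle$, and in the latter case integrality of $\langle\beta,\theta\gamma\rangle$ rules out the values $\pm 1$ that a non-orthogonal pair would require. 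This inductive argument is uniform across $A_n$, $D_n$, $E_6$ and avoids any coordinate model.

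One point to tighten in your write-up: the phrase ``$\theta$ acts as a diagram automorphism up to a Weyl element'' is imprecise and, as you note, the Weyl ambiguity would genuinely spoil the conclusion if left unresolved. The clean fix is to choose a $\theta$-stable positive system (possible since $\h_0$ is maximally compact), so that $\theta$ permutes the simple roots and is literally the diagram automorphism---this is what underlies the Vogan-diagram picture the paper invokes at the base step.
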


\begin{proof}
By Lemma~\ref{lem:introH}, all roots in $\Delta$ share the same length. We normalize the common length to be~$\sqrt{2}$. If $\alpha$ and $\theta \alpha$ are orthogonal, then they have to be strongly orthogonal. 
Without loss of generality, we assume that $\alpha$ is positive. Let $\alpha = \sum_{\alpha_i\in \Delta^+} n_i \alpha_i$, where the $\alpha_i$'s are simple roots. The proof is carried out by induction on $n := \sum_{i} n_i$. 

For $\alpha$ a simple root, we note that there does not exist an edge connecting $\alpha$ and $\theta\alpha$ in any Vogan diagram (the root system $A_n$ for $n$ even has been ruled out). Thus, $\alpha$ and $\theta\alpha$ are orthogonal.  
For the inductive step, we write $\alpha = \beta + \gamma$ with $\beta$ and $\gamma$ positive roots. Then, $\langle\beta, \gamma \rangle = -1$. Also, note that $\beta$ and $\gamma$ cannot be imaginary at the same time. We assume, without loss of generality, that $\beta$ is complex and, hence, $\langle \beta, \theta\beta\rangle = 0$. There are two cases:  
 If $\gamma$ is imaginary, then $\langle \alpha, \theta \alpha \rangle = 2\langle \beta, \gamma  \rangle + |\gamma|^2 = 0$.
If $\gamma$ is complex, then $\langle \gamma, \theta \gamma \rangle = 0$ and, hence, 
$\langle \alpha, \theta\alpha \rangle = 2\langle \beta, \theta \gamma \rangle$. 
Since $\alpha$ is complex, $\alpha \neq \pm \theta\alpha$. If $\langle\alpha, \theta\alpha\rangle\neq 0$, then $2\langle \beta, \theta \gamma \rangle  = \langle\alpha, \theta\alpha\rangle = \pm 1$, which is a contradiction because $\langle \beta, \theta \gamma \rangle$ has to be an integer. 
\end{proof}

The next result is a corollary of Prop.~\ref{lem:introH1} and follows from computation: 

\begin{corollary}
	If $\alpha$ is complex, then $[Y_\alpha,Z_\alpha] =0$. Moreover, for any $\alpha\in \Delta$,    
\begin{equation}\label{eq:introHHHH}
\left\{
\begin{array}{lllll}
\left [Y_\alpha, Y_{-\alpha} \right ] & = & -[Z_\alpha, Z_{-\alpha}] & = &  H_\alpha - H_{\theta \alpha}, \\
\left [Y_\alpha, Z_{-\alpha} \right ] & = & -\left [Y_{-\alpha}, Z_{\alpha} \right ] & = & \mathrm{i} (H_\alpha + H_{\theta \alpha} ).
\end{array}
\right.\,
\end{equation}
\end{corollary}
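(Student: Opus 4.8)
The plan is to prove all of the identities by direct expansion, substituting the definitions $Y_\alpha = X_\alpha - \theta X_{-\alpha}$ and $Z_\alpha = \mathrm{i}(X_\alpha + \theta X_{-\alpha})$ and reducing every bracket to brackets among the Chevalley generators $X_{\pm\alpha},X_{\pm\theta\alpha}$ by using that $\theta$ is a Lie algebra automorphism ($\theta[U,V]=[\theta U,\theta V]$), that $\theta H_\alpha = H_{\theta\alpha}$, and that $\theta X_\alpha = \sigma_\alpha X_{\theta\alpha}$ with $\sigma_\alpha^2 = 1$. The only genuine input beyond Lemma~\ref{theorem:rootspacedecomp} is a single vanishing fact, which I would isolate first.

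\textbf{Step 1 (the vanishing fact).} I claim $[X_\alpha, \theta X_\alpha] = 0$ and $[X_{-\alpha}, \theta X_{-\alpha}] = 0$ for every $\alpha\in\Delta$. Since $\h_0$ is maximally compact, $\Delta$ has no real root, so $\alpha$ is either imaginary or complex. If $\alpha$ is imaginary then $\theta\alpha=\alpha$ and these are $\sigma_\alpha[X_{\pm\alpha},X_{\pm\alpha}]=0$ trivially. If $\alpha$ is complex then $\theta\alpha\neq\pm\alpha$, so $\alpha$ and $\theta\alpha$ are non-proportional and item~2 of Lemma~\ref{theorem:rootspacedecomp} applies; but by Proposition~\ref{lem:introH1}, $\alpha$ and $\theta\alpha$ are strongly orthogonal, hence $\alpha\pm\theta\alpha\notin\Delta$ and both structure constants are $0$. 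This is the only place where any structure-theoretic content enters (through Prop.~\ref{lem:introH1}, and through it Lemma~\ref{lem:introH}), so it is the part to get right; the rest is sign bookkeeping.

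\textbf{Step 2 (the first identity).} For $\alpha$ complex I would expand $[Y_\alpha, Z_\alpha] = \mathrm{i}\,[\,X_\alpha - \sigma_\alpha X_{-\theta\alpha},\ X_\alpha + \sigma_\alpha X_{-\theta\alpha}\,]$ (using $\theta X_{-\alpha}=\sigma_\alpha X_{-\theta\alpha}$). The self-brackets $[X_\alpha,X_\alpha]$ and $[X_{-\theta\alpha},X_{-\theta\alpha}]$ vanish and the two cross terms combine to $2\mathrm{i}\sigma_\alpha[X_\alpha, X_{-\theta\alpha}]$. Since $\alpha$ and $-\theta\alpha$ are non-proportional and $\alpha-\theta\alpha\notin\Delta$ by strong orthogonality, this bracket is $0$, so $[Y_\alpha, Z_\alpha]=0$.

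\textbf{Step 3 (the four remaining identities).} I would expand each of $[Y_\alpha,Y_{-\alpha}]$, $[Z_\alpha,Z_{-\alpha}]$, $[Y_\alpha,Z_{-\alpha}]$, $[Y_{-\alpha},Z_\alpha]$ into its four constituent brackets. In each case exactly two of the four are, up to sign and the scalar $\sigma_\alpha$, of the form $[X_{\pm\alpha},\theta X_{\pm\alpha}]$ and vanish by Step~1; the two survivors are $[X_\alpha,X_{-\alpha}]=H_\alpha$ and $[\theta X_{-\alpha},\theta X_\alpha]=\theta[X_{-\alpha},X_\alpha]=-H_{\theta\alpha}$ (with the analogues $[X_{-\alpha},X_\alpha]=-H_\alpha$ and $[\theta X_\alpha,\theta X_{-\alpha}]=H_{\theta\alpha}$). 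Collecting the coefficients $1$ and $\mathrm{i}$ coming from the definitions of $Y_\bullet$ and $Z_\bullet$ then yields $[Y_\alpha,Y_{-\alpha}]=H_\alpha-H_{\theta\alpha}=-[Z_\alpha,Z_{-\alpha}]$ and $[Y_\alpha,Z_{-\alpha}]=\mathrm{i}(H_\alpha+H_{\theta\alpha})=-[Y_{-\alpha},Z_\alpha]$, which is~\eqref{eq:introHHHH}. The same computation is valid for imaginary $\alpha$, where $H_{\theta\alpha}=H_\alpha$ and the formulas collapse to $0$, $0$, and $2\mathrm{i}H_\alpha$, consistent with $Y_{-\alpha},Z_{-\alpha}$ then being scalar multiples of $Y_\alpha,Z_\alpha$. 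The only obstacle is Step~1; Steps~2 and~3 are routine verifications.
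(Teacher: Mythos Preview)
Your proposal is correct and matches the paper's approach: the paper gives no detailed argument, stating only that the corollary ``is a corollary of Prop.~\ref{lem:introH1} and follows from computation,'' and you have carried out precisely that computation, correctly isolating strong orthogonality of $\alpha$ and $\theta\alpha$ (Prop.~\ref{lem:introH1}) as the sole non-formal input.
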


\subsection{Distinguished sets for real Lie algebras}   
Define subsets $\Phi_\alpha$, for $\alpha\in \Delta$,  of $\g_0$ as follows:
\begin{equation}\label{eq:defphi}
\Phi_\alpha:= \{H_\alpha - H_{\theta \alpha},\,\, \mathrm{i}(H_\alpha + H_{\theta \alpha}), \,\, Y_\alpha, \,\, Z_\alpha\}.
\end{equation}
where $Y_\alpha$ and $Z_\alpha$ are elements of $\g_0$ defined in Lemma~\ref{prop:1}. 

For two arbitrary subsets $S$ and $S'$ of $\g_0$, we let $[S, S']:= \{[X, X'] \mid X\in S, X'\in S'\}$. Such a notation is different from the convention that $[S, S']$ is a linear span of $[X, X']$ for $X\in S$ and $X'\in S'$. 
We now define 
$$S_0 := \cup_{\alpha\in \Delta}\Phi_\alpha \quad \mbox{and} \quad S_k := [S_{k - 1}, S_{k-1}], \quad  \forall k\ge 1.$$ 
Note that $S_0$ spans $\g_0$. Since $\g_0$ is simple, $S_k$ spans $\g_0$ for all $k\ge 0$.

We will now have the following result.

\begin{theorem}\label{thm:allcases}
	Let $\g_0$ be a simple real Lie algebra. Suppose that $\g_0$ is noncomplex and is not a split real form of~$\g$; then, the following hold: 
	\begin{enumerate}
		\item If $\Delta$ contains only imaginary roots and the underlying root system is $A_{n}$, $D_n$, or $E_m$ for $m = 6,7,8$, then $S_0$ is distinguished. 
		\item If $\Delta$ contains only imaginary roots and the underlying root system is $B_n$ or $C_n$, or, if $\Delta$ contains complex roots and  the root system is $A_{2n-1}$ or $D_n$, then $S_1$ is distinguished.   
		\item If $\Delta$ contains only imaginary roots and the underlying root system is $F_4$, or, if $\Delta$ contains complex roots and  the root system is $E_6$, then $S_2$ is distinguished. 
		\item If the underlying root system is $G_2$, then the following set: 
		\begin{equation}\label{eq:defsstar}
			S^*:=\bigcup_{\alpha\in \Delta_{\rm long}} \Phi_\alpha\cup \bigcup_{\small
			\begin{array}{l}
			\alpha, \beta\in \Delta_{\rm short} \vspace{1pt}\\
			\mbox{and } \alpha \neq \pm \beta
			\end{array}}
			\{Y_{\alpha + \beta} \pm Y_{\alpha - \beta}, \,\, Z_{\alpha + \beta} \pm Z_{\alpha - \beta}\}
		\end{equation}
 is distinguished, where $\Delta_{\rm long}$ and $\Delta_{\rm short}$ comprise long and short roots, respectively.     
	\end{enumerate}
\end{theorem}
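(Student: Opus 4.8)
The plan is to treat the four cases separately, since in each the governing root system dictates both how many Lie-bracket iterations are needed and what the resulting set looks like. The overall strategy for cases 1--3 is a "closure under bracketing" argument: we start with $S_0 = \cup_\alpha \Phi_\alpha$, which spans $\g_0$, and show that after a fixed number $k$ of iterations $S_k = [S_{k-1},S_{k-1}]$ becomes \emph{stable} in the sense required by Definition~\ref{def:distinguishedset}, namely that (i) the bracket of any two of its elements is a scalar multiple of another element of $S_k$, and (ii) every element of $S_k$ arises as such a bracket with a nonzero scalar. The key computational inputs are the Chevalley-type relations of Lemma~\ref{theorem:rootspacedecomp}, the explicit brackets of the $Y_\alpha,Z_\alpha$ recorded in the Corollary (equation~\eqref{eq:introHHHH}), the sign rule~\eqref{eq:cond2} for $\sigma_\alpha$, and Proposition~\ref{lem:introH1} (strong orthogonality of $\alpha$ and $\theta\alpha$ when $\alpha$ is complex), which kills many cross terms.

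For case~1 (only imaginary roots, simply-laced type $A_n$, $D_n$, $E_{6,7,8}$), since $\theta\alpha=\alpha$ for every root, the elements $H_\alpha - H_{\theta\alpha}$ vanish, $\mathrm i(H_\alpha+H_{\theta\alpha}) = 2\mathrm i H_\alpha$, and $Y_\alpha = X_\alpha - \sigma_\alpha X_{-\alpha}$, $Z_\alpha = \mathrm i(X_\alpha + \sigma_\alpha X_{-\alpha})$. I would compute $[Y_\alpha,Y_\beta]$, $[Y_\alpha,Z_\beta]$, $[Z_\alpha,Z_\beta]$, $[\mathrm i H_\alpha, Y_\beta]$, etc., using that in a simply-laced system $\alpha+\beta$ can only be a root when $\langle\alpha,\beta\rangle = -1$, in which case the structure constants $c_{\alpha,\beta}$ are $\pm1$; the Cartan integers $2\langle\alpha,\beta\rangle/|\alpha|^2$ are $0,\pm1,\pm2$. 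One checks that each bracket lands (up to a real scalar, with the integer structure constants and $\sigma$-signs) back inside $S_0$, and that each generator $\mathrm i H_\alpha$, $Y_\alpha$, $Z_\alpha$ is hit nonzero by some bracket — e.g. $H_\alpha$-type elements come from $[Y_\alpha,Y_{-\alpha}]$ via~\eqref{eq:introHHHH} (specialized to $\theta\alpha=\alpha$), and $Y_{\alpha+\beta}$-type elements from $[Y_\alpha,Y_\beta]$. This closedness of $S_0$ is exactly what fails for the non-simply-laced and complex-root cases, which is why those need $S_1$ or $S_2$.

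For case~2, the obstruction in $S_0$ is that a bracket produces something like $Y_{\alpha+\beta}\pm c\, Y_{\alpha-\beta}$ (when both $\alpha+\beta$ and $\alpha-\beta$ are roots, as happens in $B_n$, $C_n$, and in the $\theta$-twisted brackets for complex $A_{2n-1}$, $D_n$) which is not in $S_0$ itself; passing to $S_1 = [S_0,S_0]$ adjoins precisely these combinations, and then one must verify that $S_1$ is closed and that every element of $S_1$ is a nonzero bracket of two elements of $S_1$ — this re-generation step is the delicate bookkeeping, since $S_1$ is larger and one has to track how the new "sum/difference" vectors bracket against each other and against the surviving $\Phi_\alpha$-elements, again leaning on length considerations and Proposition~\ref{lem:introH1}. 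Case~3 ($F_4$, or complex $E_6$) is the same idea carried one step further: $S_1$ is still not closed because of iterated length-ratio effects, but $S_2$ is. Case~4 ($G_2$) is special — a naive iteration never closes because of the long/short interaction, so instead one writes down the bespoke set $S^*$ in~\eqref{eq:defsstar} by hand, keeping the full $\Phi_\alpha$ only for long roots and using the symmetric/antisymmetric combinations $Y_{\alpha+\beta}\pm Y_{\alpha-\beta}$, $Z_{\alpha+\beta}\pm Z_{\alpha-\beta}$ over pairs of short roots, and then directly verifies the two defining properties.

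The main obstacle I expect is case~2 (and its extension, case~3): proving that the enlarged set $S_1$ (resp.\ $S_2$) is genuinely closed under bracketing \emph{and} that no element is "orphaned" (every element must be recoverable as a bracket with nonzero coefficient). This requires a careful case analysis of all pairs $\alpha,\beta$ of roots according to whether $\alpha\pm\beta$ are roots, whether they are real/imaginary/complex, and the resulting $\sigma$-signs and $c_{\alpha,\beta}$ values; the algebra is routine per case but the number of cases — together with the need to confirm the set does not keep growing, i.e. $S_2 = [S_1,S_1]$ or $S_3 = [S_2,S_2]$ stabilizes — is the real work. The verification that $S_0$ spans $\g_0$ is immediate since the $H_\alpha-H_{\theta\alpha}$, $\mathrm i(H_\alpha+H_{\theta\alpha})$ span $\h_0$ and the $Y_\alpha,Z_\alpha$ span the sum of root spaces intersected with $\g_0$, and spanning is preserved throughout because $S_0\subseteq S_k$ is maintained (a point worth remarking, since "distinguished" requires the set to span).
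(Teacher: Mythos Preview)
Your plan is correct and is essentially the paper's approach: iterate $S_k=[S_{k-1},S_{k-1}]$ and show it stabilizes at $k=0,1,2$ (or use the bespoke $S^*$ for $G_2$), with the simply-laced imaginary case closing immediately because $\alpha+\beta$ and $\alpha-\beta$ cannot both be roots. The one ingredient you have not articulated, and which the paper uses to make the ``delicate bookkeeping'' in cases~2--3 tractable, is an organizing dichotomy: a root $\alpha$ is called \emph{single} if no $\beta$ makes both $\alpha+\beta$ and $\alpha-\theta\beta$ roots, otherwise $(\alpha,\beta)$ is a \emph{matched pair}; the paper classifies these (single $=$ long, resp.\ imaginary) and shows that the new elements adjoined at each stage are exactly the $\Psi_{x,y}=\{Y_x\pm Y_y,\,Z_x\pm Z_y\}$ indexed by the strongly orthogonal pairs $(x,y)=(\alpha+\beta,\alpha-\theta\beta)$ coming from matched pairs, together with (for $F_4$/$E_6$) a further finite family $\Sigma_i$ --- this bookkeeping device, plus isometries identifying $\Delta_{\mathrm{sin}}$ for $B_n\!\leftrightarrow\!D_{n+1}$, $C_n\!\leftrightarrow\!A_{2n-1}$, $F_4\!\leftrightarrow\!E_6$, is what lets the case analysis terminate cleanly.
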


Theorem~\ref{thm:allcases} is summarized in Table~\ref{tab:table}: 

\begin{table}[h]
\centering
\begin{tabular}{c|c|c}
		Root systems & with only imaginary roots & with complex roots \\
		\hline \hline 
	           $A_{n} $ or $ D_n $ & $S_0$& $S_1$ ($n$ is odd for $A_n$)\\
		\hline
	           $E_6$ & $S_0$& $S_2$\\		
		\hline		
	           $E_7$ or $ E_8 $ & $S_0$ & --- \\				
		\hline		
	           $B_n$ or $ C_n $ & $S_1$ & --- \\				
		\hline		
	           $F_4$  & $S_2$ & ---\\				
		\hline		
	           $G_2$ & $S^*$  & ---\\				
		\hline		
	\end{tabular}
	\caption{Distinguished sets of simple real Lie algebras $\g_0$, where $\g_0$ is noncomplex and is not a split real form of $\g$.}
	\label{tab:table}
\end{table}




\section{Single Roots and Matched Pairs}\label{sec:sm}  
\subsection{Bipartitions of root sets}\label{ssec:matchpairs}

We start with the following definition:  

\begin{definition}
A pair of roots $(\alpha, \beta)$ is {\bf matched} if both $\alpha + \beta$ and $\alpha - \theta\beta$ are roots. A root  $\alpha$ is {\bf single} if there does not exist a root $\beta$ such that $(\alpha, \beta)$ is matched.  
\end{definition}

Matches pairs and single roots will be of great use in computation of distinguished sets. We have the following result:

\begin{theorem}\label{prop:matchedpairs}
The following hold for single roots:
\begin{enumerate}
\item If $\Delta$ has only imaginary roots and the underlying root system is $A_{n}$, $D_n$, or $E_m$ for $m = 6,7,8$, then all roots are single.  
\item If $\Delta$ has only imaginary roots and the underlying root system is $B_n$, $C_n$, $F_4$, or $G_2$, then a root is single if and only if it is long.
\item If $\Delta$ has complex roots, then a root is single if and only if it is imaginary. 
\end{enumerate}
\end{theorem}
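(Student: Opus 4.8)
The plan is to analyze the defining conditions of a matched pair directly in terms of root geometry, exploiting the fact that in every case the relevant root system is either simply-laced (where only $A_n$ even was excluded) or has a controlled two-length structure. The key observation is that the condition ``$(\alpha,\beta)$ is matched'' requires $\alpha+\beta\in\Delta$ \emph{and} $\alpha-\theta\beta\in\Delta$ simultaneously. I would first dispose of the easy implications and then hunt for witnesses $\beta$ in the remaining directions.

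First I would handle item (1). Here $\Delta$ has only imaginary roots, so $\theta\alpha=\alpha$ for every root, and the matched condition collapses to: both $\alpha+\beta$ and $\alpha-\beta$ are roots. In a simply-laced system, if $\alpha+\beta$ and $\alpha-\beta$ are both roots then (normalizing $|\alpha|^2=2$) one computes $|\alpha+\beta|^2+|\alpha-\beta|^2 = 2|\alpha|^2+2|\beta|^2 = 8$, forcing $|\alpha+\beta|^2=|\alpha-\beta|^2=2$, hence $\langle\alpha,\beta\rangle=0$; but then $\alpha,\beta,\alpha\pm\beta$ span a rank-2 subsystem with all four of $\pm\alpha,\pm\beta,\pm(\alpha\pm\beta)$ of equal length and pairwise configurations that would require a $B_2$ subsystem — impossible in an $A$-$D$-$E$ root system since $B_2$ does not embed isometrically. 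So no root admits a matching $\beta$, i.e. all roots are single. (I would double-check the $B_2$-non-embedding claim via the classification of rank-2 subsystems of $ADE$.)

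For item (2), again $\theta=\mathrm{id}$ on roots, so matched means $\alpha\pm\beta\in\Delta$. If $\alpha$ is long, the same length identity $|\alpha+\beta|^2+|\alpha-\beta|^2 = 2|\alpha|^2+2|\beta|^2$ together with $|\alpha\pm\beta|^2 \le |\alpha|^2$ (long is maximal length) forces $|\beta|^2\le |\alpha|^2/... $ — more carefully, $2|\beta|^2 = |\alpha+\beta|^2+|\alpha-\beta|^2 - 2|\alpha|^2 \le 2\cdot(\text{long}^2) - 2|\alpha|^2$, and since a long root cannot be written as sum of two roots one of which makes $\alpha\pm\beta$ both roots unless $\beta$ is short and orthogonal-ish; I would just check case by case in $B_2$ (the only rank-2 subsystem with two lengths here, plus $G_2$) that a long root is never simultaneously $\alpha+\beta$-generating and $\alpha-\beta$-generating, whereas a short root $\alpha$ always is: e.g. in $B_2$ with short $\alpha=e_1$, take $\beta=e_2$, then $\alpha\pm\beta = e_1\pm e_2$ are both (long) roots, so short roots are matched. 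The general statement follows because every short root lies in some $B_2$ (or $G_2$) subsystem realizing this configuration, and every long root's potential partners all live in such subsystems where the obstruction is visible. For $G_2$ the verification is a short finite computation on the $12$ roots.

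Item (3) is where the Cartan involution genuinely enters. Suppose $\Delta$ has complex roots; by Lemma~\ref{lem:introH} the system is $A_{2n-1}$, $D_n$, or $E_6$, hence simply-laced, and by Proposition~\ref{lem:introH1} every complex $\alpha$ is strongly orthogonal to $\theta\alpha$. If $\alpha$ is imaginary, then $\theta\alpha=\alpha$ and matched would need $\alpha+\beta,\alpha-\theta\beta\in\Delta$; but in the simply-laced setting $\alpha+\beta$ and $\alpha-\theta\beta$ both being roots, combined with $\langle\alpha,\beta\rangle=-1$ and $\langle\alpha,\theta\beta\rangle = \langle\theta\alpha,\theta\beta\rangle\cdot(\ldots)$ — I would show this leads to a forbidden $B_2$ or to $\beta=\pm\theta\beta$ type degeneracies, concluding imaginary roots are single. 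Conversely, if $\alpha$ is complex I must \emph{produce} a matching $\beta$: the natural candidate is $\beta$ with $\langle\alpha,\beta\rangle=-1$ chosen so that also $\langle\alpha,\theta\beta\rangle=+1$, using strong orthogonality of $\alpha,\theta\alpha$ and a dimension/counting argument inside the rank-$\le 4$ subsystem generated by $\{\alpha,\theta\alpha,\beta,\theta\beta\}$; one then verifies $\alpha+\beta$ and $\alpha-\theta\beta$ are roots. Establishing the existence of such $\beta$ for \emph{every} complex $\alpha$ — uniformly across $A_{2n-1}$, $D_n$, $E_6$ and all admissible Vogan diagrams — is the main obstacle; I expect to handle it by reducing to the rank-2 or rank-3 subsystem spanned by $\alpha$ and a well-chosen simple-root neighbor, where the involution acts by the relevant diagram automorphism, and then checking the finitely many local configurations. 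The $E_6$ case, having the triality-free but order-2 automorphism, will need the most care.
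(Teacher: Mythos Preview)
Your approach differs from the paper's, which proves items~2 and~3 by pure enumeration: it writes each root system in explicit coordinates (together with the action of $\theta$ in the complex-root cases (e)--(g)) and simply lists which pairs $(\alpha,\beta)$ are matched. Your geometric arguments do handle the \emph{non-existence} directions once corrected. For item~1 the parallelogram identity already yields a contradiction: if $\alpha\pm\beta$ are both roots in a simply-laced system with $|\alpha|^2=2$, then each has squared length~$2$, so the left side of $|\alpha+\beta|^2+|\alpha-\beta|^2 = 2|\alpha|^2+2|\beta|^2$ equals~$4$ while the right equals~$8$; your inference ``hence $\langle\alpha,\beta\rangle=0$'' is erroneous and the $B_2$-embedding step is unnecessary. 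The same device shows long roots are single in item~2. For item~3, if $\alpha$ is imaginary then $\theta\alpha=\alpha$ and, since $\theta$ is an isometry, $\langle\alpha,\theta\beta\rangle=\langle\theta\alpha,\beta\rangle=\langle\alpha,\beta\rangle$; but matchedness forces $\langle\alpha,\beta\rangle=-1$ and $\langle\alpha,\theta\beta\rangle=+1$, an immediate contradiction that sharpens your sketch.

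The genuine gap is the \emph{existence} direction: for every short root (item~2) and every complex root (item~3) you must actually produce a witness~$\beta$. Your plan to localize to a rank-$\le 3$ subsystem containing $\alpha$ and ``a well-chosen simple-root neighbor'' is not a proof without execution. For item~3 the constraint on $\beta$ reads $\langle\alpha,\beta\rangle=-1$ and $\langle\theta\alpha,\beta\rangle=+1$ with $\alpha,\theta\alpha$ strongly orthogonal (Proposition~\ref{lem:introH1}); this is a codimension-two condition in the root lattice with no a~priori reason to be satisfied, and your subsystem reduction does not supply one uniformly across the admissible Vogan diagrams for $A_{2n-1}$, $D_n$, $E_6$. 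The paper's coordinate realizations make existence transparent and, as a byproduct, yield the complete description of matched pairs used downstream in Theorem~\ref{prop:orthogonalpairs} and Proposition~\ref{prop:f4e6q}; your approach, even if completed, would still need something comparable for those later results.
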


For a given $\Delta$, we let $\Delta_{\rm sh}$ and $\Delta_{\rm long}$ be the collections of short and long roots, respectively, and let $\Delta_{\rm im}$ and $\Delta_{\rm comp}$ be the collections of imaginary and complex roots, respectively.    
 Let $e_1,\ldots, e_n$ be the standard basis of $\R^n$. 
 For an arbitrary set $S$ in $\R^n$, we let $-S$ be its negative and $\pm S:= S \cup - S$.  

\begin{proof}
For item~1, we note that all roots share the same length. If $(\alpha, \beta)$ is a matched pair, then $\|\alpha \pm \beta\|^2 = 2\|\alpha\|^2$ which is a contradiction. 


For items 2 and 3, we reproduce below root systems, together with the Cartan involutions (if $\Delta$ contains complex roots). If $\Delta$ has only imaginary roots, then there are four cases:    
\begin{enumerate}
\item[(a)] Root system is $B_n$. Let $\Delta = \Delta_{\rm sh}
\cup \Delta_{\rm long}$ be defined as follows:  
$$
\left\{
\begin{array}{lll}
\Delta_{\rm sh} & := & \{ \pm e_i\mid 1\le i\le n \}, \\ 
\Delta_{\rm long} & := & \{\pm e_i \pm  e_j \mid 1\le i< j\le n \}.
\end{array}
\right. $$ 
Then, $(\alpha, \beta)$ is matched if and only if $\alpha, \beta\in \Delta_{\rm sh}$ and $\alpha \neq \pm\beta$. 

\item[(b)] Root system is $C_n$. Let $\Delta = \Delta_{\rm sh}
\cup \Delta_{\rm long}$ be defined as follows:  
$$
\left\{
\begin{array}{lll}
\Delta_{\rm sh} & := & \{\pm \nicefrac{\sqrt{2}}{2} \, e_i \pm \nicefrac{\sqrt{2}}{2}\,  e_j \mid 1\le i < j \le n \}, \\ 
\Delta_{\rm long} & := & \{ \pm \sqrt{2} e_i \mid 1\le i\le n \}.
\end{array}
\right.
$$ 
Then, 
$(\alpha, \beta)$ is matched if and only if $\pm\{\alpha, \beta\} = \{\pm \nicefrac{\sqrt{2}}{2} \, e_i \pm \nicefrac{\sqrt{2}}{2}\,  e_j \}$.  
 
\item[(c)] Root system is $F_4$. Let $\Delta = \Delta_{\rm sh}
\cup \Delta_{\rm long}$ be defined as follows: 
\begin{equation}\label{eq:F4longroots}
\Delta_{\rm long} := \{\pm e_i \pm e_j \mid 1\le i < j \le 4\},
\end{equation}
and $\Delta_{\rm sh}= \cup^3_{i = 1}\Delta_{{\rm sh}_i}$ where 
\begin{equation}\label{eq:F4shortroots}
\left\{
\begin{array}{lll}
\Delta_{{\rm sh}_1} & := & \left\{\pm e_i \mid 1\le i \le 4\right\}, \vspace{2pt}\\
 \Delta_{{\rm sh}_2} & := & \{\nicefrac{1}{2}\sum^4_{i = 1} \epsilon_i e_i \mid \epsilon^2_i = 1 \mbox{ and }  \epsilon_1\epsilon_2\epsilon_3\epsilon_4 = 1 \}, \vspace{2pt}\\
 \Delta_{{\rm sh}_3} & := & \{\nicefrac{1}{2}\sum^4_{i = 1} \epsilon_i e_i \mid \epsilon^2_i = 1 \mbox{ and }  \epsilon_1\epsilon_2\epsilon_3\epsilon_4 = -1 \}.
\end{array}
\right.
\end{equation}
Then, $(\alpha, \beta)$ is matched if and only if $\alpha, \beta$ belong to the same $\Delta_{{\rm sh}_i}$ for some $i = 1,2,3$ and $\alpha \neq \pm\beta$. 

\item[(d)] Root system is $G_2$. Let $\Delta = \Delta_{\rm sh}
\cup \Delta_{\rm long}$ be defined as follows: 
$$
\left\{
\begin{array}{lll}
\Delta_{\rm sh} & := & \pm  \{e_i - e_j \mid 1\le i < j \le 3 \}, \\ 
\Delta_{\rm long} & := & \pm \{e_i +  e_j - 2e_k \mid \{i,j,k\} = \{1,2,3\} \}.
\end{array}
\right.
$$ 
Then, $(\alpha, \beta)$ is matched if and only if $\alpha,\beta\in \Delta_{\rm sh}$ and  $\alpha \neq \pm\beta$.
\end{enumerate}
If $\Delta$ has complex roots, then there are three cases:
\begin{enumerate}
\item[(e)]  Root system is $A_{2n -1}$. Let $\Delta = \Delta_{\rm im}\cup \Delta_{\rm comp}$ be defined as follows: 
$$
\left\{
\begin{array}{lll}
\Delta_{\rm im} & := & \{e_i - e_{2n + 1 - i} \mid 1\le i \le 2n \}, \\
\Delta_{\rm comp} & := & \{e_i - e_j \mid i\neq j,i + j\neq 2n+1\}.
\end{array}
\right.
$$  
The Cartan involution $\theta$ acts on $\Delta$ as follows:  
$\theta: e_i - e_j \mapsto e_{2n + 1 - j} -e_{2n + 1 - i}$. 
Then, $(\alpha, \beta)$ is matched if and only if $(\alpha, \beta) = (e_i - e_j, e_j - e_{2n + 1 - i})$ for $j \neq 2n + 1 - i$.   

\item[(f)] Root system is $D_n$. Let $\Delta = \Delta_{\rm im}\cup \Delta_{\rm comp}$ be defined as follows: 
$$
\left\{
\begin{array}{lll}
 \Delta_{\rm im} & := & \{\pm e_i \pm e_j \mid 1 \le i < j \le n - 1\}, \\ 
 \Delta_{\rm comp} & := & \{\pm e_i \pm e_n \mid i\neq n\}.
 \end{array}
 \right. 
$$
 The Cartan involution $\theta$ acts on $\Delta$ by negating the sign of $e_n$, i.e., $\theta: e_i + e_n \mapsto e_i - e_n$. 
 Then,  $(\alpha, \beta)$ is matched if and only if 
$(\alpha, \beta) = (\epsilon_i e_i + \epsilon_n e_n, \epsilon_j e_j - \epsilon_n e_n)$ for $i\neq j$ and $\epsilon_i^2 = \epsilon_j^2 = \epsilon_n^2 = 1$. 

\item[(g)] Root system is $E_6$. First, we define $a_i,b_i\in \R^3$, for $1\le i \le 3$, as follows:    
\begin{equation}\label{eq:defaibi}
a_i := e_j - e_k \quad  \mbox{and} \quad b_i := (e_j + e_k - 2e_i)/3,
\end{equation}
 where $(i,j,k)$ is a cyclic rotation of $(1,2,3)$. 
 We then let $\Delta_{\rm im}$ and $\Delta_{\rm comp}$ be sets of vectors in $\R^9$ defined as follows: 
\begin{equation}\label{eq:E6imroots}
\Delta_{\rm im} := \pm \{(a_i, 0, 0) \mid 1 \le i \le 3\} \cup  
 \pm \{ (b_i,b_j,b_j) \mid 1\le i,j \le 3\}, 
\end{equation}
and $\Delta_{\rm comp} = \cup^3_{i = 1}\Delta_{{\rm comp}_i}$ where each $\Delta_{{\rm comp}_i}$ is given by
\begin{multline}\label{eq:E6comproots}
\Delta_{{\rm comp}_i} :=  
\pm \{(0, a_i, 0), (0,0, a_i)\} \,\, \cup \\
\pm \{ (b_{l}, b_{j}, b_{k})\mid  1\le l\le 3 \mbox{ and } \{j,k\} = \{1,2,3\} \backslash \{i\} \}. 
\end{multline}
The Cartan involution $\theta$ acts on $\Delta$ as follows:   
 $\theta: (r, s, t) \mapsto (r, t, s)$. 
 Then, $(\alpha, \beta)$ is matched if and only if  $\alpha,\beta$  belong to the same $\Delta_{{\rm comp}_i}$ for some $i = 1,2,3$ with $\beta\not\in \{\pm \alpha, \pm \theta\alpha\}$ and $\langle \alpha, \beta \rangle < 0$.
 \end{enumerate}
 This completes the proof.
\end{proof}

The following result is a corollary of Theorem~\ref{prop:matchedpairs}:

\begin{corollary}\label{cor:orthogonalpair}
	If $(\alpha, \beta)$ is a matched pair, then $\alpha  + \beta$ and $\alpha - \theta \beta$ are strongly orthogonal. 
\end{corollary}

\begin{proof}
If $\Delta$ contains only imaginary roots, then, by Theorem~\ref{prop:matchedpairs}, $\alpha,\beta\in \Delta_{\rm sh}$. It follows that $\langle \alpha + \beta, \alpha - \beta \rangle = |\alpha|^2 - |\beta|^2 = 0$. Since $2\alpha$ and $2\beta$ are not roots, $\alpha + \beta$ and $\alpha - \beta$ are strongly orthogonal. 
If $\Delta$ contains complex roots, then all roots share the same length. We normalize the length to be $\sqrt{2}$. Since $\alpha + \beta$, $\alpha - \theta \beta$ are roots, $\langle\alpha, \beta  \rangle = -1$  and $\langle\alpha, \theta\beta  \rangle = 1$. 
By Theorem~\ref{prop:matchedpairs}, $\alpha, \beta\in \Delta_{\rm comp}$. Then, by Prop.~\ref{lem:introH1}, $\langle \beta, \theta \beta\rangle = 0$, so  
$\langle \alpha + \beta, \alpha - \theta\beta \rangle =  2 + \langle \alpha, \beta\rangle + \langle \alpha, -\theta\beta\rangle = 0$. 	
Since $(\alpha - \theta\beta)$ is imaginary, $\alpha -\theta \beta = \theta\alpha - \beta$ and, hence, $(\alpha + \beta) + (\alpha - \theta\beta) = \alpha + \theta\alpha$ and $(\alpha + \beta) - (\alpha - \theta\beta) = \beta + \theta\beta$. By Prop.~\ref{lem:introH1}, $\alpha + \theta\alpha$ and $\beta + \theta\beta$ are not roots. Thus, $\alpha + \beta$ and $\alpha - \theta\beta$ are strongly orthogonal. 
\end{proof}

\subsection{Induced orthogonal pairs}

Following Corollary~\ref{cor:orthogonalpair}, we introduce the following definition:

\begin{definition}
We call a pair of roots $(x,y)$ an {\bf induced orthogonal pair (\textit{\textbf{iop}}}) if there is a matched pair $(\alpha, \beta)$ such that $x = \alpha + \beta$ and $y  = \alpha - \theta\beta$. 
\end{definition}

For an {\em iop} $(x,y)$, we let $R_{x, y} := \pm \{x, y\}$. For two {\em iop}s $(x,y)$ and $(x',y')$, we write $(x, y)\sim (x',y')$ if $R_{x, y} = R_{x',y'}$. The following result can be established using  the root systems reproduced in the proof of Theorem~\ref{prop:matchedpairs}.

\begin{theorem}\label{prop:orthogonalpairs}
Let $(x, y)$ be an {\em iop}. If $\Delta$ has only imaginary roots, then there are four cases: 
	\begin{enumerate}
	\item[(a)] Root system is $B_n$ and $(x, y) \sim (e_i+e_j, e_i - e_j)$ for $i\neq j$.
	\item[(b)] Root system is $C_n$ and $(x, y) \sim (\sqrt{2} e_i, \sqrt{2} e_j)$ for $i\neq j$. 
	\item[(c)] Root system is $F_4$ and $x, y\in \Delta_{\rm long}$ with $\langle x, y\rangle = 0$.
	\item[(d)] Root system is $G_2$ and $\langle x, y\rangle = 0$.	
	\end{enumerate}
	If $\Delta$ has complex roots, then there are three cases:
	\begin{enumerate}
   \item[(e)] Root system is $A_{2n-1}$ and $(x, y) \sim (e_i - e_{2n + 1 - i}, e_j - e_{2n + 1 - j})$ for $i\neq j$ and $i + j\neq 2n + 1$.	
   \item[(f)] Root system is $D_n$ and  $(x, y) \sim (e_i + e_j, e_i - e_j)$ for $i\neq j$, $i\neq n$, and $j\neq n$.
   \item[(g)] Root system is $E_6$ and $x,y\in \Delta_{\rm im}$ with $\langle x, y\rangle = 0$.
   \end{enumerate} 
\end{theorem}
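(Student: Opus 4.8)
The plan is to derive the classification directly from the explicit description of matched pairs in Theorem~\ref{prop:matchedpairs}, using the coordinate realizations of the root systems and the Cartan involutions recorded in its proof. For each root system one runs through the list of matched pairs $(\alpha,\beta)$, substitutes into $x=\alpha+\beta$ and $y=\alpha-\theta\beta$, simplifies, and reads off $R_{x,y}=\pm\{x,y\}$; Corollary~\ref{cor:orthogonalpair} already guarantees $\langle x,y\rangle=0$, so the only content is identifying which orthogonal pairs actually occur.

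When $\Delta$ has only imaginary roots (cases (a)--(d)), $\theta$ fixes every root, so $y=\alpha-\beta$ with $\alpha,\beta$ short and $\alpha\neq\pm\beta$; the parallelogram identity $\|\alpha+\beta\|^2+\|\alpha-\beta\|^2=2(\|\alpha\|^2+\|\beta\|^2)$, together with the fact that $\alpha\pm\beta$ are both roots, forces $\langle\alpha,\beta\rangle=0$, so $\|x\|^2=\|y\|^2=2\|\alpha\|^2$. For $B_n$ this gives $\{x,y\}=\{\epsilon e_i+\delta e_j,\ \epsilon e_i-\delta e_j\}$, i.e. $(x,y)\sim(e_i+e_j,e_i-e_j)$; the $C_n$ computation is identical after rescaling; for $F_4$ the identity forces $\|x\|^2=\|y\|^2$ equal to the long-root length, so $x,y\in\Delta_{\rm long}$; for $G_2$ it forces $\{\|x\|^2,\|y\|^2\}$ to consist of one short and one long length. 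For $F_4$ and $G_2$ I would also check the converse inclusion by putting $\alpha=\tfrac12(x+y)$, $\beta=\tfrac12(x-y)$ and verifying from the coordinate lists in the proof of Theorem~\ref{prop:matchedpairs} that these are short roots lying in a common $\Delta_{{\rm sh}_i}$ (for $F_4$) or a pair of short roots with $\alpha\neq\pm\beta$ (for $G_2$); this exhibits the matched pair inducing $(x,y)$.

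When $\Delta$ has complex roots (cases (e)--(g)), the same substitution is carried out with the stated $\theta$. As in the proof of Corollary~\ref{cor:orthogonalpair}, both $x$ and $y$ are then imaginary roots. For $A_{2n-1}$ a matched pair telescopes so that $x=e_i-e_{2n+1-i}\in\Delta_{\rm im}$, and applying $\theta$ to the second summand yields $y\in\pm\Delta_{\rm im}$, giving $(x,y)\sim(e_i-e_{2n+1-i},\,e_j-e_{2n+1-j})$; for $D_n$ the matched pair $(\epsilon_i e_i+\epsilon_n e_n,\,\epsilon_j e_j-\epsilon_n e_n)$ gives $x=\epsilon_i e_i+\epsilon_j e_j$ and $y=\epsilon_i e_i-\epsilon_j e_j$, i.e. $(x,y)\sim(e_i+e_j,e_i-e_j)$; for $E_6$ one checks from \eqref{eq:defaibi}--\eqref{eq:E6comproots} that $x,y\in\Delta_{\rm im}$, which with Corollary~\ref{cor:orthogonalpair} gives the stated form, and the converse is obtained by completing each orthogonal pair in $\Delta_{\rm im}$ to a matched pair inside a common $\Delta_{{\rm comp}_i}$ with $\langle\alpha,\beta\rangle<0$.

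The substitutions that land in a single equivalence class ($B_n$, $C_n$, $A_{2n-1}$, $D_n$) are mechanical, essentially dictated by the matched-pair list. The real work is the converse direction in the cases phrased as a bare orthogonality condition --- $F_4$, $G_2$, and especially $E_6$. For $F_4$ and $G_2$ the completion is still forced by short root lengths together with the explicit coordinates, but $E_6$ is the main obstacle: $\Delta_{\rm im}$ sits inside $\R^9$ through the substitution \eqref{eq:defaibi}, so one cannot produce the completing pair $\alpha,\beta$ by a clean length/angle argument and must instead locate it by hand in the lists \eqref{eq:E6imroots}--\eqref{eq:E6comproots}, which is where the bulk of the verification lies.
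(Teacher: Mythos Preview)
Your approach is correct and is essentially what the paper does: substitute the matched pairs listed in the proof of Theorem~\ref{prop:matchedpairs} into $x=\alpha+\beta$, $y=\alpha-\theta\beta$, and for the cases (c), (d), (g) phrased as a bare orthogonality condition, check surjectivity of the map $(\alpha,\beta)\mapsto(\alpha+\beta,\alpha-\theta\beta)$ by explicitly completing each orthogonal pair to a matched pair. The paper's proof is extremely terse (``follows from computation using the root systems reproduced''), and you have correctly identified that the only nontrivial verification is the $E_6$ converse. One minor point: your parallelogram argument has the logic slightly inverted---you should first use the root-length constraints to force $\|x\|=\|y\|$ (for $B_n,C_n,F_4$) and then \emph{deduce} $\langle\alpha,\beta\rangle=0$, not the other way around; and this step genuinely fails for $G_2$, where $\langle\alpha,\beta\rangle\neq 0$ and $\|x\|\neq\|y\|$, as you note separately.
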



\begin{remark}
Note that if the root system is not $G_2$ and $(x,y)$ is an {\em iop}, then $x$ and $y$ are single (and, hence, imaginary).
But, if the root system is $G_2$, then $(x,y)$ is an {\em iop} if and only if $(x, y) = (e_i - e_j, e_i + e_j - 2e_k)$ for $(i,j,k)$ a cyclic rotation of $(1,2,3)$, so one of the two roots $\{x, y\}$ is short and the other is long.   	
\end{remark}


We now assume that $\Delta$ contains at least a single root. 
If a root system is named ``$\star$'' ($\star = A_n, B_n$, etc.), then we let  $\Delta^\star_{\rm sin}$ be the collection of single roots in $\Delta$. 
There is no ambiguity of using such a notation: By Theorem~\ref{prop:matchedpairs}, if the underlying root system is $B_{n}$, $C_n$, $F_4$, or $G_2$, then $\Delta^\star_{\rm sin} = \Delta_{\rm long}$. If the underlying root system is $A_{2n - 1}$, $D_n$, or $E_6$, then $\Delta^\star_{\rm sin} = \Delta_{\rm im}$.     
Let $V^\star_{\rm sin}$ be the vector space spanned by $\Delta^\star_{\rm sin}$ equipped with the standard inner-product. We have the following fact: 

\begin{proposition}\label{prop:isometry}
Let $(\star, \star')$ be either $(B_n, D_{n + 1})$, $(C_n, A_{2n - 1})$, or $(F_4, E_6)$. Then, in each case, there is a linear isometry $\pi_{\star \to \star'}: V^\star_{\rm sin} \to V^{\star'}_{\rm sin}$ 
such that it preserves single roots, i.e., $\pi_{\star \to \star'}(\Delta^\star_{\rm sin}) = \Delta^{\star'}_{\rm sin}$. Moreover, the map preserves iops, i.e., if $(x, y)$ is an {\em iop}, then so is $(\pi_{\star \to \star'}(x), \pi_{\star \to \star'}(y))$. 
\end{proposition}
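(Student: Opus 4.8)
The plan is to construct the three isometries $\pi_{\star\to\star'}$ explicitly, using the coordinate descriptions of the single-root sets already assembled in the proof of Theorem~\ref{prop:matchedpairs} and Theorem~\ref{prop:orthogonalpairs}. For each pair $(\star,\star')$ I first identify $\Delta^\star_{\rm sin}$ and $\Delta^{\star'}_{\rm sin}$ with concrete subsets of Euclidean space: by the remark preceding the statement, $\Delta^{B_n}_{\rm sin}=\Delta^{B_n}_{\rm long}=\{\pm e_i\pm e_j\}$, $\Delta^{C_n}_{\rm sin}=\Delta^{C_n}_{\rm long}=\{\pm\sqrt2\,e_i\}$, $\Delta^{F_4}_{\rm sin}=\Delta^{F_4}_{\rm long}=\{\pm e_i\pm e_j:1\le i<j\le 4\}$, while on the $\star'$ side, $\Delta^{D_{n+1}}_{\rm sin}=\Delta^{D_{n+1}}_{\rm im}=\{\pm e_i\pm e_j:1\le i<j\le n\}$, $\Delta^{A_{2n-1}}_{\rm sin}=\Delta^{A_{2n-1}}_{\rm im}=\{e_i-e_{2n+1-i}:1\le i\le 2n\}$, and $\Delta^{E_6}_{\rm sin}=\Delta^{E_6}_{\rm im}$ as in~\eqref{eq:E6imroots}.

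Next I exhibit the maps. For $(B_n,D_{n+1})$ the identity on $\R^n$ already works: $\Delta^{B_n}_{\rm long}$ and $\Delta^{D_{n+1}}_{\rm im}$ are literally the same set $\{\pm e_i\pm e_j:1\le i<j\le n\}$, so $\pi_{B_n\to D_{n+1}}=\mathrm{id}$ is trivially an isometry preserving single roots. For $(C_n,A_{2n-1})$ I send $\sqrt2\,e_i\mapsto e_i-e_{2n+1-i}$ (for $1\le i\le n$) and extend linearly; since $\{e_i-e_{2n+1-i}:1\le i\le n\}$ is an orthogonal set of vectors each of norm $\sqrt2$, this matches the orthogonal set $\{\sqrt2\,e_i\}$ of norm $\sqrt2$, so the map is an isometry, and it clearly carries $\Delta^{C_n}_{\rm sin}$ onto $\Delta^{A_{2n-1}}_{\rm sin}$. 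For $(F_4,E_6)$ I use the coordinates~\eqref{eq:defaibi}: the six vectors $\pm(a_i,0,0)$ span a plane, the vectors $\pm(b_i,b_j,b_j)$ fill out the rest, and a direct check (using $\langle a_i,a_j\rangle$, $\langle b_i,b_j\rangle$ from the $E_6$ data) shows that $\Delta^{F_4}_{\rm long}=\{\pm e_i\pm e_j:1\le i<j\le 4\}$ and $\Delta^{E_6}_{\rm im}$ have the same Gram data up to a permutation, which defines the desired isometry $\pi_{F_4\to E_6}$; here I would pin down the correspondence by matching a base (four mutually ``compatible'' roots spanning each space) and checking inner products pairwise.

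Finally, once the map is known to be a linear isometry with $\pi_{\star\to\star'}(\Delta^\star_{\rm sin})=\Delta^{\star'}_{\rm sin}$, the iop statement follows from Theorem~\ref{prop:orthogonalpairs}: in cases (a)--(c) and (e)--(g) an iop is characterized purely by the inner-product condition $\langle x,y\rangle=0$ together with membership in $\Delta^\star_{\rm sin}$ (for $B_n$ and $D_{n+1}$ it is $(x,y)\sim(e_i+e_j,e_i-e_j)$, which is again just orthogonality within the long/imaginary roots). Since a linear isometry preserves both membership in the single-root set and orthogonality, it carries iops to iops. I would verify case by case that the list in Theorem~\ref{prop:orthogonalpairs} is indeed invariant under the specific $\pi_{\star\to\star'}$; for $(F_4,E_6)$ this needs the observation that two long roots of $F_4$ are orthogonal iff their images in $\Delta^{E_6}_{\rm im}$ are orthogonal, which is immediate from the isometry property.

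The main obstacle I anticipate is the $(F_4,E_6)$ case: unlike the other two, there is no ``on the nose'' identification, so I must actually produce the permutation of roots realizing the isometry and confirm it respects the $\Delta_{{\rm sh}_i}$/$\Delta_{{\rm comp}_i}$ refinement implicitly (so that orthogonal long roots in $F_4$ match orthogonal imaginary roots in $E_6$). This is a finite but somewhat delicate calculation with the $9$-dimensional coordinates of~\eqref{eq:E6imroots}; everything else is routine linear algebra.
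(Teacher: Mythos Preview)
Your approach is essentially the paper's: construct each $\pi_{\star\to\star'}$ explicitly on a basis and then verify. The paper pins down $\pi_{F_4\to E_6}$ by sending the basis $e_1+e_2,\ e_1-e_2,\ e_3+e_4,\ e_3-e_4$ to $(a_1,0,0),\ (b_1,b_1,b_1),\ (b_1,b_2,b_2),\ (b_1,b_3,b_3)$ respectively, which is the concrete choice you would eventually have to make in your ``match a base and check inner products'' step.

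One assertion to correct: your claim that in cases (a) and (f) an iop is ``again just orthogonality within the long/imaginary roots'' is false once $n\ge 4$. In $B_4$, the long roots $e_1+e_2$ and $e_3+e_4$ are orthogonal, yet $(e_1+e_2,\,e_3+e_4)$ is \emph{not} an iop: Theorem~\ref{prop:orthogonalpairs}(a) requires $R_{x,y}=\{\pm e_i\pm e_j\}$ for a single index pair $\{i,j\}$. So the slick ``isometry preserves orthogonality, hence iops'' argument is valid only for $(C_n,A_{2n-1})$ and $(F_4,E_6)$. This does not actually damage your proof, since for $(B_n,D_{n+1})$ your map is the identity and the iop descriptions in (a) and (f) are literally the same list $(e_i+e_j,\,e_i-e_j)$ with $1\le i<j\le n$; just cite that coincidence directly rather than the orthogonality characterization.
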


\begin{proof}
We construct the isometries explicitly. Each $\pi_{\star \to \star'}$ is defined by specifying its value on a basis of $V^\star_{\rm sin}$. For the first two maps, we let  
$$
\begin{array}{lllll}
	\pi_{B_n \to D_{n + 1}}( e_i \pm e_n ) & := & e_i \pm  e_n, & \quad & \forall \,\, 1\le i <  n, \vspace{3pt}\\
	\pi_{C_n\to A_{2n - 1}}(\sqrt{2} e_i) & := & e_i - e_{2n + 1 - i}, & \quad & \forall \,\, 1\le i \le n. \vspace{3pt} \\
\end{array}
$$
For the last map, we recall that $a_i, b_i\in \R^3$ are defined in~\eqref{eq:defaibi} and let  
$$
\left\{
\begin{array}{ll}
	\pi_{F_4\to E_6}(e_1 + e_2) := (a_1, 0, 0), & \pi_{F_4\to E_6}(e_1 - e_2) := (b_1, b_1, b_1), \vspace{3pt} \\
	\pi_{F_4\to E_6}(e_3 + e_4) := (b_1, b_2, b_2), & \pi_{F_4\to E_6}(e_3 - e_4) := (b_1, b_3, b_3).
\end{array}
\right.
$$
Computation shows that the above maps satisfy the required properties. 	
\end{proof}

The above result implies that if $(\star, \star')$ is one of the three pairs, then $\Delta^\star_{\rm sin}$ and $\Delta^{\star'}_{\rm sin}$ are essentially the same. We now take a closer look at long roots  in $F_4$ (and imaginary roots in $E_6$). 
We decompose $\Delta_{\rm sin} = \cup^3_{i = 1} \Delta_{{\rm sin}_i}$  as follows:  
\begin{equation}\label{eq:singleroots}
\left\{
\begin{array}{l}
\Delta_{{\rm sin}_1}:= \{\pm e_1 \pm e_2, \,\, \pm e_3 \pm e_4\},\\ 
\Delta_{{\rm sin}_2}:= \{\pm e_1 \pm e_4, \,\, \pm e_2 \pm e_3 \}, \\
\Delta_{{\rm sin}_3}:= \{\pm e_1 \pm e_3, \,\, \pm e_2 \pm e_4 \}.
\end{array}
\right.
\end{equation}
The decomposition is made such that two roots belong to the same $\Delta_{{\rm sin}_i}$ if and only if they are proportional or orthogonal. By item~{ (c)} of Theorem~\ref{prop:orthogonalpairs}, if $(x, y)$ is an {\em iop}, then $x, y$ belong to the same $\Delta_{{\rm sin}_i}$ for some~$i$. The same decomposition can be applied to imaginary roots in $E_6$ under the map $\pi_{F_4 \to E_6}$. With a slight abuse of notation, we will use $\Delta_{{\rm sin}_i}$ to denote either $\Delta_{{\rm sin}_i}$ for $F_4$ or $\pi_{F_4 \to E_6}(\Delta_{{\rm sin}_i})$ for $E_6$.

\subsection{Addition of induced orthogonal pairs}
We assume in the section that the underlying root system is {\em not} $G_2$. 
For two {\em iop}s 
$(x, y)$ and $(x',y')$, let  
$
Q:= R_{x,y} + R_{x',y'}
$. 
We investigate the case  where $Q$ contains either zero or at least a root. 
We first have the following result:


\begin{proposition}\label{prop:zeroQ}
	If $Q$ contains zero, then $R_{x, y}$ intersects $R_{x',y'}$. Moreover,
	 any two nonproportional roots out of $R_{x, y} \cup R_{x',y'}$ are strongly orthogonal.
	\end{proposition}

\begin{proof}
The result is trivial if $R_{x, y} = R_{x',y'}$. We thus assume that  
$R_{x, y} \cap R_{x',y'} = \{\pm z\}$
for some root~$z$. By Theorem~\ref{prop:orthogonalpairs} and the linear isometries (Prop.~\ref{prop:isometry}), it suffices to consider root systems $C_n$ and $F_4$. If root system is $C_n$, then $R_{x, y}\cup R_{x',y'} = \pm\{\sqrt{2} e_i, \sqrt{2} e_j, \sqrt{2} e_k\}$ for $i,j,k$ pairwise distinct. 
	If root system is $F_4$, then by~\eqref{eq:singleroots}, $R_{x,y}$ and $R_{x',y'}$  belong to the same $\Delta_{{\rm sin}_i}$ for some $i = 1,2,3$. 
	 and, hence, any two nonproportional roots out of $R_{x, y} \cup R_{x',y'}$ are orthogonal to each other. Since all roots in $R_{x, y} \cup R_{x',y'}$ are single, orthogonality implies strong orthogonality.  	
\end{proof}

We next consider the case where $Q$ contains at least a root. For the given roots $x$, $y$, $x'$, and $y'$, we define a product of their inner-products as follows: 
\begin{equation}\label{eq:defxi}
\xi:= \langle x, x'\rangle \langle y, y'\rangle \langle x, y'\rangle \langle y, x'\rangle.
\end{equation}
Note that $\xi$ is invariant if ones replaces any root $x,y,x',y'$ with its negative. Thus, $\xi$ depends only on $R_{x, y}$ and $R_{x',y'}$. By Theorem~\ref{prop:orthogonalpairs}, there are only two cases (up to the isometry~$\pi_{\star\to\star'}$) in which $Q$ has at least a root: 
\begin{enumerate}
\item Root system is $B_n$ (or $D_{n + 1}$) and $R_{x, y} = \{\pm e_i \pm e_j\}$, $R_{x',y'} = \{\pm e_j \pm e_k\}$ where $i,j,k$ are pairwise distinct.  The set $Q$ has four roots $R_{x'',y''}$ where $(x'',y'')$ is an {\em iop} with $(x'',y'') \sim (e_i + e_k, e_i - e_k)$. In this case,  $\xi > 0$. 

\item Root system is $F_4$ (or $E_6$) and $R_{x, y} \subset \Delta_{{\rm sin}_i}$, $R_{x',y'}\subset \Delta_{{\rm sin}_j}$ with $i\neq j$. In this case, all roots of $Q$ belong to $\Delta_{{\rm sin}_k}$ where $\{k\}:= \{1,2,3\} \backslash \{i,j\}$. Because two roots belong to the same $\Delta_{{\rm sin}_l}$ if and only if  they are proportional or orthogonal, we have that $\xi\neq 0$. However, $\xi$ can be either positive or negative (in contrast to the previous case where $\xi$ is always positive). Examination of the root system leads to the following fact:     
   $\xi > 0$ if and only if $Q$ contains four roots $R_{x'',y''}$ for $(x'',y'')$ an {\em iop}; $\xi < 0$ if and only if $Q$ contains eight roots $\Delta_{{\rm sin}_k}$.  
\end{enumerate}
The next proposition then follows directly from the above arguments:  

\begin{proposition}\label{prop:f4e6q}
Suppose that $Q$ has at least a root; then, there are two cases: 
\begin{enumerate}
	\item If $\xi > 0$, then $Q$ contains four roots $R_{x'',y''}$ where $(x'',y'')$ is an iop.
	\item If $\xi < 0$, then the underlying root system can only be $F_4$ or $E_6$. Let $i,j\in \{1,2,3\}$ be such that $R_{x, y}\subset \Delta_{{\rm sin}_i}$ and $R_{x',y'}\subset \Delta_{{\rm sin}_j}$. Then, $i \neq j$ and, moreover, $Q$ contains eight roots $\Delta_{{\rm sin}_k}$ where $\{k\} := \{1,2,3\}\backslash \{i,j\}$.   
\end{enumerate}
\end{proposition}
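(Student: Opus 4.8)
The plan is to prove Proposition~\ref{prop:f4e6q} as an essentially bookkeeping consequence of the case analysis already laid out in the two enumerated cases preceding its statement, so the main work is to organize that analysis cleanly rather than to discover anything new. First I would recall that by Theorem~\ref{prop:orthogonalpairs} and the isometries of Proposition~\ref{prop:isometry}, an {\em iop} $(x,y)$ with $R_{x,y} = \pm\{x,y\}$ always falls, up to $\pi_{\star\to\star'}$, into one of the normal forms there; in particular $x,y$ are orthogonal single roots. Combining this with the fact that $\xi$ in~\eqref{eq:defxi} is manifestly invariant under replacing any of $x,y,x',y'$ by its negative, $\xi$ is a function of the unordered pair $(R_{x,y},R_{x',y'})$ alone, so it is legitimate to compute it on representatives.

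The second step is to split on whether the root system is of type $B_n$/$D_{n+1}$ or of type $F_4$/$E_6$ (types $A_{2n-1}$, $C_n$ being covered by the isometries to $D_n$, $A_{2n-1}$ respectively, and $G_2$ excluded by hypothesis). In the $B_n$/$D_{n+1}$ case I would take $R_{x,y} = \{\pm e_i \pm e_j\}$ and, for $Q = R_{x,y} + R_{x',y'}$ to contain a root, note that $R_{x',y'}$ must share exactly one index with $\{i,j\}$, say $R_{x',y'} = \{\pm e_j \pm e_k\}$ with $i,j,k$ pairwise distinct; a direct check shows every root of $Q$ then has the form $\pm e_i \pm e_k$, so $Q$ contains the four roots $R_{x'',y''}$ with $(x'',y'') \sim (e_i+e_k, e_i-e_k)$, which is an {\em iop} by Theorem~\ref{prop:orthogonalpairs}(f), and evaluating~\eqref{eq:defxi} on these representatives gives a product of four $\pm 1$'s that multiplies out to $+1 > 0$. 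This places the $B_n$/$D_{n+1}$ case squarely in conclusion~1.

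The third step handles $F_4$/$E_6$. Here I would use the decomposition $\Delta_{\rm sin} = \cup_{i=1}^3 \Delta_{{\rm sin}_i}$ from~\eqref{eq:singleroots}, together with the stated characterization that two single roots lie in the same $\Delta_{{\rm sin}_l}$ iff they are proportional or orthogonal. If $R_{x,y}$ and $R_{x',y'}$ lie in the same $\Delta_{{\rm sin}_i}$ then all pairwise inner products of nonproportional roots among them vanish, so $Q \subseteq \Delta_{{\rm sin}_i}$ contains only $0$ and differences of orthogonal pairs — no new roots — so this subcase is vacuous for the hypothesis. Hence $R_{x,y}\subset \Delta_{{\rm sin}_i}$, $R_{x',y'}\subset \Delta_{{\rm sin}_j}$ with $i\neq j$, and since any $x\in\Delta_{{\rm sin}_i}$, $x'\in\Delta_{{\rm sin}_j}$ are neither proportional nor orthogonal we get every factor in~\eqref{eq:defxi} nonzero, so $\xi\neq 0$. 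The remaining content is: reading off the $F_4$ root coordinates, every sum $x\pm x'$ with $x\in\Delta_{{\rm sin}_i}$, $x'\in\Delta_{{\rm sin}_j}$ lands in $\Delta_{{\rm sin}_k \cup \{0\}}$ where $\{k\} = \{1,2,3\}\setminus\{i,j\}$, and the finer dichotomy — whether $Q$ meets $\Delta_{{\rm sin}_k}$ in exactly four roots (an {\em iop} $R_{x'',y''}$) or in all eight roots $\Delta_{{\rm sin}_k}$ — is governed precisely by the sign of $\xi$. I would verify the sign correspondence on one representative pair from each of the finitely many configurations in~\eqref{eq:singleroots} (e.g. $R_{x,y} = \{\pm e_1\pm e_2,\pm e_3\pm e_4\}$ against $R_{x',y'} = \{\pm e_1\pm e_4,\pm e_2\pm e_3\}$), and transport to $E_6$ via $\pi_{F_4\to E_6}$, which preserves single roots and {\em iop}s.

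The one genuine obstacle is the very last point: the claim that the sign of $\xi$ exactly distinguishes the ``four roots'' from the ``eight roots'' outcome in $F_4$/$E_6$ is a concrete but slightly delicate combinatorial verification, since $\xi$ is a product of four inner products each of which can be $\pm 1/2$ depending on how the $\pm$ signs in the representatives of $R_{x,y}$ and $R_{x',y'}$ correlate, and one has to check this sign is actually well defined on $(R_{x,y}, R_{x',y'})$ (it is, by the sign-invariance noted above) and that it tracks the geometry as asserted. Everything else is Theorem~\ref{prop:orthogonalpairs}, Proposition~\ref{prop:isometry}, and direct inspection of the displayed root-system coordinates, so the proof reduces to: (i) invariance of $\xi$; (ii) the $B_n/D_{n+1}$ computation giving $\xi>0$ and an {\em iop}; (iii) ruling out the same-$\Delta_{{\rm sin}_i}$ subcase in $F_4/E_6$; and (iv) the sign-of-$\xi$ verification against the two geometric alternatives in $F_4/E_6$.
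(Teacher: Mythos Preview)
Your proposal is correct and follows essentially the same route as the paper: the proposition is read off from the two-case analysis (classical $B_n/D_{n+1}$ versus exceptional $F_4/E_6$) using the explicit {\em iop} classification of Theorem~\ref{prop:orthogonalpairs}, the isometries of Proposition~\ref{prop:isometry}, and the sign-invariance of $\xi$, with the $F_4$ sign dichotomy left to a finite check on representatives. Two small corrections: the relevant inner products in $F_4$ are $\pm 1$, not $\pm 1/2$ (long roots there have length $\sqrt{2}$), and the $C_n$/$A_{2n-1}$ cases are in fact vacuous for the hypothesis (sums of their single roots are never roots), rather than being reduced to $D_n$ by an isometry.
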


\section{Computation of Distinguished Sets}\label{sec:classicalim}

We establish here Theorem~\ref{thm:allcases}. 
We assume in Sections~\ref{ssec:computes0} -- \ref{ssec:computes3} that the underlying root system is {\em not} $G_2$ and establish items~(1) -- (3) of the theorem. We deal with the case $G_2$ in Section~\ref{ssec:caseG2} and establish item~(4). 

\subsection{Computation of $S_1$}\label{ssec:computes0}
For two arbitrary subsets $S$ and $S'$ of $\g_0$, we write $S \sqsubseteq S'$ if for any $X\in S$, there is an $X'\in S'$ such that $X = \lambda X'$ for some $\lambda\in \R$. We write $S \equiv S'$ if $S\sqsubseteq S'$ and $S' \sqsubseteq S$. 

  Recall that $\Phi_\alpha = \{H_\alpha - H_{\theta \alpha}, \mathrm{i}(H_\alpha + H_{\theta\alpha}), Y_\alpha, Z_\alpha\}$ and $S_0 = \cup_{\alpha\in\Delta}\Phi_\alpha$. If $\gamma$ is not a root, then by default $X_\gamma$ (and, hence, $Y_\gamma = Z_\gamma = 0$). 
The following result directly follows from computation: 
 
\begin{proposition}\label{lem3}
For any $\alpha, \beta\in \Delta$,   
\begin{equation}\label{lem3:cond11}
\left\{
\begin{array}{lll}
 [H_\alpha - H_{\theta \alpha}, Y_{\beta} ]  =  \nicefrac{2\langle \alpha, \beta -\theta\beta \rangle}{|\alpha|^2} Y_\beta, &  & 
 { [\mathrm{i}(H_\alpha + H_{\theta\alpha}), Y_\beta  ]}  =  \nicefrac{2\langle \alpha, \beta + \theta\beta \rangle}{|\alpha|^2}  Z_\beta, \vspace{3pt} \\
 { [H_\alpha - H_{\theta \alpha}, Z_{\beta}  ]}  =  \nicefrac{2\langle \alpha, \beta -\theta\beta \rangle}{|\alpha|^2} Z_\beta, &  & 
 { [\mathrm{i}(H_\alpha + H_{\theta\alpha}), Z_\beta  ]}  =  -\nicefrac{2\langle \alpha, \beta + \theta\beta \rangle}{|\alpha|^2}  Y_\beta. 
\end{array}
\right.
\end{equation}
If $\beta \notin \{\pm\alpha, \pm \theta \alpha\}$,  then
\begin{equation}\label{lem3:cond1}
\left\{
\begin{array}{lll}
{ [Y_\alpha, Y_\beta  ]} & = & c_{\alpha, \beta} Y_{\alpha + \beta} - \sigma_\beta c_{\alpha,-\theta\beta} Y_{\alpha - \theta \beta}, \\
{ [Z_\alpha, Z_\beta  ]} & = & -c_{\alpha, \beta} Y_{\alpha + \beta} -\sigma_\beta c_{\alpha,-\theta\beta} Y_{\alpha - \theta \beta}, \\
{ [Y_\alpha, Z_{\beta} ]} & = & c_{\alpha, \beta} Z_{\alpha + \beta} + \sigma_\beta c_{\alpha, -\theta\beta} Z_{\alpha -\theta\beta}, \\
{ [ Z_{\alpha}, Y_\beta ]} & = & c_{\alpha, \beta} Z_{\alpha + \beta} - \sigma_\beta c_{\alpha, -\theta\beta} Z_{\alpha -\theta\beta}.  
\end{array}
\right.
\end{equation}
If $\beta \in \{-\alpha, \theta \alpha\}$, then  
$[\Phi_\alpha, \Phi_\beta] \equiv \Phi_\alpha$. If $\beta\in \{\alpha, -\theta\alpha\}$ and $\alpha$ is complex, then $[\Phi_\alpha, \Phi_\beta] \equiv \{Y_\alpha, Z_\alpha\}$. 
\end{proposition}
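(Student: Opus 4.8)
The plan is to reduce every bracket appearing in the statement to the Chevalley basis $\{X_\gamma\}_{\gamma\in\Delta}$ and the structural identities already on record: the relation $[H_\gamma,X_\delta]=\tfrac{2\langle\gamma,\delta\rangle}{|\gamma|^2}X_\delta$ from Lemma~\ref{theorem:rootspacedecomp}, the $\theta$-invariances $\sigma_\gamma=\sigma_{-\gamma}=\sigma_{\theta\gamma}$, $c_{\gamma,\delta}=-c_{-\gamma,-\delta}$, $c_{\theta\gamma,\theta\delta}=\sigma_\gamma\sigma_\delta\sigma_{\gamma+\delta}c_{\gamma,\delta}$, and the fact that $\theta$ is an isometry of the root lattice (so $|\theta\gamma|=|\gamma|$ and $\langle\theta\gamma,\theta\delta\rangle=\langle\gamma,\delta\rangle$, whence $\langle\theta\gamma,\delta\rangle=\langle\gamma,\theta\delta\rangle$). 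First I would rewrite $Y_\gamma=X_\gamma-\sigma_\gamma X_{-\theta\gamma}$ and $Z_\gamma=\mathrm{i}(X_\gamma+\sigma_\gamma X_{-\theta\gamma})$ using $\theta X_{-\gamma}=\sigma_\gamma X_{-\theta\gamma}$, and adopt the convention $c_{\gamma,\delta}=0$ whenever $\gamma+\delta\notin\Delta$, which is consistent with $X_\gamma=Y_\gamma=Z_\gamma=0$ off $\Delta$ and makes all the displayed formulas genuine identities regardless of which sums of roots happen to be roots.

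For~\eqref{lem3:cond11} I would compute the action of $H_\alpha-H_{\theta\alpha}$ and of $\mathrm{i}(H_\alpha+H_{\theta\alpha})$ on $X_\beta$ and on $X_{-\theta\beta}$ separately. Using the bracket formula and $\langle\theta\alpha,\beta\rangle=\langle\alpha,\theta\beta\rangle$, one finds that $H_\alpha-H_{\theta\alpha}$ multiplies both $X_\beta$ and $X_{-\theta\beta}$ by the same scalar $\tfrac{2\langle\alpha,\beta-\theta\beta\rangle}{|\alpha|^2}$, hence multiplies $Y_\beta$ and $Z_\beta$ by it; and $\mathrm{i}(H_\alpha+H_{\theta\alpha})$ sends $X_\beta\mapsto\mathrm{i}\tfrac{2\langle\alpha,\beta+\theta\beta\rangle}{|\alpha|^2}X_\beta$ while sending $X_{-\theta\beta}$ to the opposite multiple of itself, so it carries $Y_\beta$ to $\tfrac{2\langle\alpha,\beta+\theta\beta\rangle}{|\alpha|^2}Z_\beta$ and $Z_\beta$ to $-\tfrac{2\langle\alpha,\beta+\theta\beta\rangle}{|\alpha|^2}Y_\beta$. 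There is no difficulty in this part, and the non-proportionality hypothesis is not needed.

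For~\eqref{lem3:cond1} I would take $[Y_\alpha,Y_\beta]$ as the model case; the other three lines are identical up to tracking the factors of $\mathrm{i}$ and the internal signs of $Y$ versus $Z$. Expanding, $[Y_\alpha,Y_\beta]=[X_\alpha,X_\beta]-\sigma_\beta[X_\alpha,X_{-\theta\beta}]-\sigma_\alpha[X_{-\theta\alpha},X_\beta]+\sigma_\alpha\sigma_\beta[X_{-\theta\alpha},X_{-\theta\beta}]$, and the hypothesis $\beta\notin\{\pm\alpha,\pm\theta\alpha\}$ is exactly what makes all four index pairs non-proportional, so Lemma~\ref{theorem:rootspacedecomp}(2) applies to each bracket. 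Using $c_{-\theta\alpha,-\theta\beta}=-c_{\theta\alpha,\theta\beta}=-\sigma_\alpha\sigma_\beta\sigma_{\alpha+\beta}c_{\alpha,\beta}$, this gives
\[
[Y_\alpha,Y_\beta]=c_{\alpha,\beta}X_{\alpha+\beta}-\sigma_{\alpha+\beta}c_{\alpha,\beta}X_{-\theta\alpha-\theta\beta}-\sigma_\beta c_{\alpha,-\theta\beta}X_{\alpha-\theta\beta}-\sigma_\alpha c_{-\theta\alpha,\beta}X_{\beta-\theta\alpha}.
\]
On the other hand, expanding $Y_{\alpha+\beta}=X_{\alpha+\beta}-\sigma_{\alpha+\beta}X_{-\theta\alpha-\theta\beta}$ and $Y_{\alpha-\theta\beta}=X_{\alpha-\theta\beta}-\sigma_{\alpha-\theta\beta}X_{\beta-\theta\alpha}$ (using $\theta(\alpha-\theta\beta)=\theta\alpha-\beta$) shows that $c_{\alpha,\beta}Y_{\alpha+\beta}-\sigma_\beta c_{\alpha,-\theta\beta}Y_{\alpha-\theta\beta}$ reproduces the first three monomials above automatically, while the two $X_{\beta-\theta\alpha}$-coefficients agree if and only if
\[
\sigma_\alpha\,c_{-\theta\alpha,\beta}+\sigma_\beta\,\sigma_{\alpha-\theta\beta}\,c_{\alpha,-\theta\beta}=0 .
\]

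This last scalar identity is the only real content of the proposition, and I expect it to be the main (and only) obstacle, though not a serious one. To prove it I would apply the $\theta$-relation to the pair $(-\theta\alpha,\beta)$: since $\theta(-\theta\alpha)=-\alpha$, $\sigma_{-\theta\alpha}=\sigma_\alpha$, and $\sigma_{-\theta\alpha+\beta}=\sigma_{\alpha-\theta\beta}$, it reads $c_{-\alpha,\theta\beta}=\sigma_\alpha\sigma_\beta\sigma_{\alpha-\theta\beta}\,c_{-\theta\alpha,\beta}$, hence $c_{-\theta\alpha,\beta}=\sigma_\alpha\sigma_\beta\sigma_{\alpha-\theta\beta}\,c_{-\alpha,\theta\beta}=-\sigma_\alpha\sigma_\beta\sigma_{\alpha-\theta\beta}\,c_{\alpha,-\theta\beta}$ by $c_{-\alpha,\theta\beta}=-c_{\alpha,-\theta\beta}$; multiplying through by $\sigma_\alpha$ gives the claim, and the degenerate case $\alpha-\theta\beta\notin\Delta$ is trivial since then both $c$-coefficients are zero. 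Running the identical expansion for $[Z_\alpha,Z_\beta]$, $[Y_\alpha,Z_\beta]$, and $[Z_\alpha,Y_\beta]$ produces in each case the same three matching monomials plus a single leftover $X_{\beta-\theta\alpha}$-term, and each reduces to exactly this identity; the $\pm$ signs in the four displayed formulas are precisely what the sign of $\mathrm{i}^2$ and of the $Y$/$Z$ expansions dictate.
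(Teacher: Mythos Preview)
Your proposal is correct and is precisely the direct computation the paper alludes to: the paper's ``proof'' consists only of the phrase ``which directly follows from computation,'' and you have supplied exactly that computation, expanding in the Chevalley basis and using the recorded identities $c_{\theta\gamma,\theta\delta}=\sigma_\gamma\sigma_\delta\sigma_{\gamma+\delta}c_{\gamma,\delta}$ and $c_{-\gamma,-\delta}=-c_{\gamma,\delta}$. The single scalar identity $\sigma_\alpha c_{-\theta\alpha,\beta}+\sigma_\beta\sigma_{\alpha-\theta\beta}c_{\alpha,-\theta\beta}=0$ you isolate and verify is indeed the only nontrivial step, and your derivation of it is clean.
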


By Prop.~\ref{lem3}, $S_0 \sqsubseteq S_1$ and, hence, $S_k \sqsubseteq S_{k+1}$ for any $k\ge 0$.

 Now, let $(\alpha, \beta)$ be a matched pair. Since the underlying root system is not $G_2$, it follows that $c^2_{\alpha, \beta} = c^2_{\alpha, -\theta\beta}$. 
 Let $(x, y)$ be the corresponding {\em  iop}, i.e., $x = \alpha + \beta$ and $y = \alpha - \theta\beta$. We define 
 \begin{equation*}\label{eq:defpsi}
\Psi_{x, y}:= \{Y_{x} \pm Y_{y}, \,\, Z_{x} \pm Z_{y}\}. 
\end{equation*}   
Note that for any imaginary root~$\gamma$, $Y_\gamma = -\sigma_\gamma Y_{-\gamma}$ and $Z_\gamma = \sigma_\gamma Z_{-\gamma}$. Since $x$ and $y$ are imaginary, 
it follows that if $(x, y)\sim (x',y')$, then $\Psi_{x,y} \equiv \Psi_{x',y'}$. 
We now have the following result:

\begin{proposition}\label{prop:s0s0}
Let $(\alpha, \beta)$ be a matched pair and $(x,y)$ be the corresponding {\em  iop}. Then,
$\langle \alpha, \beta + \theta \beta \rangle = 0$ and $c^2_{\alpha, \beta} = c^2_{\alpha, -\theta\beta} > 0$. Moreover,   
\begin{equation*}\label{eq:computes0s0}
[\Phi_\alpha, \Phi_\beta] \equiv
\left\{
\begin{array}{ll}
\Psi_{x, y} & \mbox{if $\Delta$ contain only imaginary roots}, \\
\{Y_\alpha, Z_\alpha, Y_\beta, Z_\beta\} \cup \Psi_{x, y} & \mbox{if $\Delta$ contains complex roots}.
\end{array}
\right.
\end{equation*}
\end{proposition}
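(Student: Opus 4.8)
The plan is to establish the two numerical claims first and then to expand $[\Phi_\alpha,\Phi_\beta]$ one bracket at a time using Proposition~\ref{lem3}, treating the imaginary-only and the complex-root cases in parallel.

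For the identity $\langle\alpha,\beta+\theta\beta\rangle = 0$: by Theorem~\ref{prop:matchedpairs} a matched pair consists of two short roots when $\Delta$ has only imaginary roots, and of two complex roots when $\Delta$ has complex roots. In the first case $\theta\beta=\beta$ and $\alpha,\beta$ have equal length, while $x=\alpha+\beta$ and $y=\alpha-\beta$ are roots of the common long length; comparing $|x|^2$ with $|y|^2$ forces $\langle\alpha,\beta\rangle=0$, hence $\langle\alpha,\beta+\theta\beta\rangle=2\langle\alpha,\beta\rangle=0$. In the second case all roots share a length, which we normalize to $\sqrt{2}$, and the facts that $\alpha+\beta$ and $\alpha-\theta\beta$ are roots give $\langle\alpha,\beta\rangle=-1$ and $\langle\alpha,\theta\beta\rangle=1$, so again $\langle\alpha,\beta+\theta\beta\rangle=0$. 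For the second claim, $c^2_{\alpha,\beta}>0$ because $\alpha+\beta$ is a root, and the equality $c^2_{\alpha,\beta}=c^2_{\alpha,-\theta\beta}$ has already been recorded (it uses only that the root system is not $G_2$).

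Next I would expand the sixteen brackets comprising $[\Phi_\alpha,\Phi_\beta]$. Observe first that $\beta\notin\{\pm\alpha,\pm\theta\alpha\}$: otherwise one of $\alpha+\beta$, $\alpha-\theta\beta$ would equal $0$ or $\alpha-\theta\alpha$, and $\alpha-\theta\alpha$ is not a root (it vanishes when $\alpha$ is imaginary, and $\alpha,\theta\alpha$ are strongly orthogonal when $\alpha$ is complex by Proposition~\ref{lem:introH1}); hence formula~\eqref{lem3:cond1} applies. The four brackets of Cartan elements vanish since $\h$ is abelian. Using $\langle\alpha,\beta+\theta\beta\rangle=0$, together with the equivalent identity $\langle\beta,\alpha+\theta\alpha\rangle=0$, the formulas~\eqref{lem3:cond11} show that every bracket of $\mathrm{i}(H_\alpha+H_{\theta\alpha})$ with $Y_\beta$ or $Z_\beta$, and of $Y_\alpha$ or $Z_\alpha$ with $\mathrm{i}(H_\beta+H_{\theta\beta})$, is zero. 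If $\Delta$ has only imaginary roots then $H_\alpha-H_{\theta\alpha}=H_\beta-H_{\theta\beta}=0$, so the only surviving brackets are the four among $\{Y_\alpha,Z_\alpha\}$ and $\{Y_\beta,Z_\beta\}$. If $\Delta$ has complex roots then $\langle\alpha,\beta-\theta\beta\rangle=-2=\langle\beta,\alpha-\theta\alpha\rangle$, so $[H_\alpha-H_{\theta\alpha},Y_\beta]$ and $[H_\alpha-H_{\theta\alpha},Z_\beta]$ are nonzero multiples of $Y_\beta,Z_\beta$ while $[Y_\alpha,H_\beta-H_{\theta\beta}]$ and $[Z_\alpha,H_\beta-H_{\theta\beta}]$ are nonzero multiples of $Y_\alpha,Z_\alpha$, which accounts for the extra summand $\{Y_\alpha,Z_\alpha,Y_\beta,Z_\beta\}$.

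It remains to handle the four brackets $[Y_\alpha,Y_\beta]$, $[Z_\alpha,Z_\beta]$, $[Y_\alpha,Z_\beta]$, $[Z_\alpha,Y_\beta]$. Writing $\epsilon:=\sigma_\beta c_{\alpha,-\theta\beta}/c_{\alpha,\beta}$, which is well defined and equals $\pm 1$ by the second claim, equations~\eqref{lem3:cond1} give $[Y_\alpha,Y_\beta]=c_{\alpha,\beta}(Y_x-\epsilon Y_y)$, $[Z_\alpha,Z_\beta]=-c_{\alpha,\beta}(Y_x+\epsilon Y_y)$, $[Y_\alpha,Z_\beta]=c_{\alpha,\beta}(Z_x+\epsilon Z_y)$ and $[Z_\alpha,Y_\beta]=c_{\alpha,\beta}(Z_x-\epsilon Z_y)$. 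Since $x$ and $y$ are strongly orthogonal (Corollary~\ref{cor:orthogonalpair}) they are non-proportional imaginary roots, so $Y_x\pm Y_y$ and $Z_x\pm Z_y$ are nonzero; hence, whether $\epsilon=1$ or $\epsilon=-1$, these four brackets are exactly nonzero scalar multiples of the four elements of $\Psi_{x,y}$. Combining this with the previous paragraph, and noting that the zero brackets do not affect $\equiv$ (since $0=0\cdot X'$ for any $X'$), yields $[\Phi_\alpha,\Phi_\beta]\equiv\Psi_{x,y}$ in the imaginary case and $[\Phi_\alpha,\Phi_\beta]\equiv\{Y_\alpha,Z_\alpha,Y_\beta,Z_\beta\}\cup\Psi_{x,y}$ in the complex case. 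The one nonroutine point is this last step: it is the hypothesis $c^2_{\alpha,\beta}=c^2_{\alpha,-\theta\beta}$, equivalently $\epsilon=\pm 1$, that forces each of the four brackets onto a single element of $\Psi_{x,y}$ rather than onto a generic combination of $Y_x,Y_y$ (or $Z_x,Z_y$); once $\langle\alpha,\beta+\theta\beta\rangle=0$ is available, the rest is routine bookkeeping.
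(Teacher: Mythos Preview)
Your argument is correct and follows essentially the same route as the paper: first pin down $\langle\alpha,\beta+\theta\beta\rangle=0$ and $c^2_{\alpha,\beta}=c^2_{\alpha,-\theta\beta}$, then read off the brackets from Proposition~\ref{lem3}. The only cosmetic difference is that in the imaginary case the paper obtains $\langle\alpha,\beta\rangle=0$ via the $\alpha$-string (finding $p=q=1$, hence $c^2_{\alpha,\beta}=4$), whereas you deduce it from $|x|=|y|$; both are valid and lead to the same computation.
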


\begin{proof}
If $\Delta$ contains only imaginary roots, then the $\alpha$-string $\beta + n\alpha$, for $-q\le n \le p$, is such that $p = q = 1$. Thus, $\langle \alpha, \beta \rangle = \nicefrac{|\alpha|^2(q - p)}{2} = 0$ and  $c^2_{\alpha, \beta} = c^2_{\alpha, -\beta} = 4$. It then follows from~\eqref{lem3:cond11} and\eqref{lem3:cond1} that $$[\mathrm{i}H_\alpha, \Phi_\beta] = [\mathrm{i}H_\beta, \Phi_\alpha] = 0 \quad \mbox{and} \quad [\{Y_\alpha, Z_\alpha \}, \{Y_\beta, Z_\beta\}] \equiv \Psi_{x, y}.$$
We next assume that $\Delta$ contains complex roots. All roots share the same length. We  normalize it to be $\sqrt{2}$. Then, $\langle \alpha, \beta\rangle = -1$ and $\langle \alpha, \theta\beta \rangle = 1$. Thus, $\langle \alpha, \beta + \theta\beta\rangle = 0$ and $\langle \alpha, \beta - \theta \beta\rangle = -2$.   
It follows from~\eqref{lem3:cond11}  that 
$$
\left\{
\begin{array}{lll}
{[\mathrm{i}(H_\alpha + H_{\theta\alpha}), \Phi_\beta] } = 0, & & [\mathrm{i}(H_\beta + H_{\theta\beta}), \Phi_\alpha] = 0,  \\
{[H_\alpha - H_{\theta\alpha}, \Phi_\beta] }\equiv \{Y_\beta, Z_\beta\}, & & [H_\beta - H_{\theta\beta}, \Phi_\alpha] \equiv \{Y_\alpha, Z_\alpha\}. 
\end{array}
\right.
$$
Since neither $\alpha - \beta$ nor $\alpha + \theta\beta$ is a root,  $c^2_{\alpha, \beta} = c^2_{\alpha,-\theta\beta} = 1$. It follows from~\eqref{lem3:cond1} that $[\{Y_\alpha, Z_\alpha \}, \{Y_\beta, Z_\beta\}] \equiv \Psi_{x, y}$. 	
\end{proof}

The above result then implies that $S_1 \equiv S_0 \cup \bigcup_{(x, y)}\Psi_{x, y}$, where the union is over all the {\em iop}s $(x,y)$. If there does not exist an {\em iop}, then $S_1 \equiv S_0$. By Theorem~\ref{prop:matchedpairs}, this is the case if $\Delta$ contains only imaginary roots and the underlying root system is $A_{n}$, $D_n$, or $E_m$ for $m = 6,7,8$. This establishes item~1 of Theorem~\ref{thm:allcases}.

\subsection{Computation of $S_2$}\label{ssec:computes1}

To compute $S_2 = [S_1,S_1]$, it suffices to compute $[\Psi_{x,y}, \Phi_\gamma]$ for $(x, y)$ an {\em  iop} and $\gamma$ an arbitrary root, and $[\Psi_{x, y}, \Psi_{x',y'}]$ for two {\em  iop}s $(x,y)$ and $(x',y')$.

\paragraph{Computation of \text{$[\Psi_{x,y}, \Phi_\gamma]$}.} 
Let $(\alpha, \beta)$ be a matched pair and $(x,y)$ be the corresponding {\em  iop}. 
We define
$
P := R_{x, y} + \{\gamma, -\theta\gamma\} 
$. 
Note that $P$ is closed under $-\theta$. 
Let $T_{\alpha, \beta}:= \pm\{\alpha, \theta\alpha, \beta, \theta\beta\}$. 
We have the following fact:

\begin{proposition}\label{prop:onepaironeroot}
If $P$ does not contain any root or zero, then $[\Psi_{x, y}, \Phi_\gamma] = 0$. 
If $P$ contains zero, then $\gamma\in R_{x, y}$ and $[\Psi_{x, y}, \Phi_\gamma] \equiv \Phi_\gamma$. If $P$ contains at least a root, then the following hold: 
\begin{enumerate}
	\item If $\gamma\in T_{\alpha, \beta}$, then $P$ has two roots $\{\delta, -\theta\delta\}$ and 
			$
			[\Psi_{x, y}, \Phi_\gamma] \equiv \Psi_{x, y} \cup \{Y_{\delta}, Z_\delta\}
			$. 
	\item If $\gamma\in \Delta\backslash\Delta_{\rm sin}$ and $\gamma\not\in T_{\alpha, \beta}$, then $P$ has two roots $\{\delta, -\theta\delta\}$ and $\gamma$ is strongly orthogonal to either $x$ or $y$. Let $u\in \{x, y\}$ be the root such that $\langle \gamma, u\rangle = 0$  and $\{v\}:=\{x,y\}\backslash \{u\}$. Then,  $[\Psi_{x, y}, \Phi_\gamma]\equiv \{Y_{v},Z_{v}, Y_{\delta}, Z_\delta\}$. 
	\item If $\gamma \in \Delta_{\rm sin}$, then $P$ has four roots $R_{x',y'}$ where $(x',y')$ is an {\em  iop} and $[\Psi_{x, y}, \Phi_\gamma] \equiv \Psi_{x, y} \cup \Psi_{x',y'}$.
\end{enumerate}
\end{proposition}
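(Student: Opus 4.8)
The plan is to reduce everything to the explicit bracket formulas in Proposition~\ref{lem3} together with the combinatorial description of $P := R_{x,y} + \{\gamma, -\theta\gamma\}$, and then to walk through the four structural possibilities for $P$ one at a time. First I would observe that, since $x$ and $y$ are imaginary (because the root system is not $G_2$, so by Theorem~\ref{prop:orthogonalpairs} every {\em iop} consists of single = imaginary roots), we have $Y_x = -\sigma_x Y_{-x}$, $Z_x = \sigma_x Z_{-x}$ and likewise for $y$; hence $\Psi_{x,y}$ is, up to rescaling its four elements, stable under replacing $x,y$ by their negatives, and $[\Psi_{x,y},\Phi_\gamma]$ is governed entirely by which sums $\pm x \pm \gamma$, $\pm x \mp \theta\gamma$, $\pm y \pm \gamma$, $\pm y \mp\theta\gamma$ land in $\Delta\cup\{0\}$ — that is, by the set $P$. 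This is where the closure of $P$ under $-\theta$ gets used: roots of $P$ come in pairs $\{\delta, -\theta\delta\}$, which is exactly why the conclusions feature $\{Y_\delta, Z_\delta\}$ (a $-\theta$-orbit) rather than a single $Y$.

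Next I would dispatch the two easy cases. If $P$ contains no root and no zero, then expanding $[Y_x \pm Y_y, \Phi_\gamma]$ etc.\ via \eqref{lem3:cond1} and \eqref{lem3:cond11} gives only terms $Y_{x\pm\gamma}$, $Y_{x-\theta\gamma}$, $Y_{y\pm\gamma}$, $Y_{y-\theta\gamma}$ (and their $Z$-analogues), all of which vanish since the subscripts are non-roots; the $H$-brackets vanish too since $x\pm\gamma$, $y\pm\gamma$ being non-roots forces the relevant inner products to be zero (more carefully: $[iH_\gamma, Y_x] \propto \langle\gamma, x+\theta x\rangle Z_x$ and $[H_\gamma - H_{\theta\gamma}, Y_x]\propto \langle\gamma, x-\theta x\rangle Y_x$; one checks these coefficients are forced to $0$ when none of $x\pm\gamma, x\pm\theta\gamma$ is a root, using the standard fact that $\langle\gamma,x\rangle\ne 0$ would put a root among $x\pm\gamma$ — here one may need that all roots have equal length in the complex-root case, or handle short/long separately in the imaginary-root case). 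If $P$ contains $0$, then one of $\pm x, \pm y$ equals $\pm\gamma$ or $\pm\theta\gamma$; since $x,y$ are imaginary this forces $\gamma \in R_{x,y}$, and then $[\Psi_{x,y},\Phi_\gamma] \equiv [\Phi_x, \Phi_x]$-type brackets, which by \eqref{eq:introHHHH} and \eqref{lem3:cond11} reproduce $\Phi_\gamma$ (the $H$-part of $\Phi_\gamma$ comes out of $[Y_x, Y_{-x}]$-style brackets, the $Y,Z$ part from the $H$-brackets).

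The substantive work is the three sub-cases when $P$ contains a root, and here I would lean hard on the addition-of-{\em iop}s analysis already done in Section~2.3. The point is that $R_{x,y} + \{\gamma,-\theta\gamma\}$ decomposes according to whether $\gamma$ is "aligned" with the {\em iop}: if $\gamma\in T_{\alpha,\beta} = \pm\{\alpha,\theta\alpha,\beta,\theta\beta\}$, then (going back to $x=\alpha+\beta$, $y=\alpha-\theta\beta$) one computes directly that $x+\gamma$ or $y+\gamma$ etc.\ recovers, e.g., $2\alpha-\theta\beta+\beta$-type combinations — concretely the only new root produced is a single $-\theta$-pair $\{\delta,-\theta\delta\}$ and the brackets regenerate $\Psi_{x,y}$ itself plus $\{Y_\delta,Z_\delta\}$; for this I'd verify $c^2_{x,\gamma} = c^2_{\delta,\ldots}$ are nonzero using Corollary after Prop.~\ref{lem:introH1} and Corollary~\ref{cor:orthogonalpair}. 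If $\gamma\notin \Delta_{\rm sin}$ and $\gamma\notin T_{\alpha,\beta}$, then $\gamma$ is short (imaginary-root case) or complex, and I would check case-by-case in the root-system models (b),(c),(e),(f),(g) of Theorem~\ref{prop:matchedpairs} that exactly one of $x,y$ is strongly orthogonal to $\gamma$ — this is the combinatorial heart — yielding the stated $\{Y_v, Z_v, Y_\delta, Z_\delta\}$. Finally if $\gamma\in\Delta_{\rm sin}$, then adding the {\em iop} $(x,y)$ to the "one-element {\em iop}" $\pm\{\gamma, -\theta\gamma\}$ — which is itself of the form $R_{x'',y''}$ when $\gamma$ is single, interpreting the degenerate case — is precisely the situation of Proposition~\ref{prop:zeroQ}/\ref{prop:f4e6q}, so $P$ has four roots forming an {\em iop} $R_{x',y'}$ and the bracket regenerates $\Psi_{x,y}\cup\Psi_{x',y'}$.

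The main obstacle I anticipate is the second sub-case: showing that a non-single root $\gamma$ not in $T_{\alpha,\beta}$ is strongly orthogonal to \emph{exactly one} of $x,y$. Proportionality/orthogonality of single roots was packaged via the decomposition $\Delta_{\rm sin} = \cup \Delta_{{\rm sin}_i}$ and the isometries $\pi_{\star\to\star'}$, but $\gamma$ here is \emph{not} single, so that packaging doesn't directly apply and one must return to the explicit coordinate models — checking $B_n/C_n$ by hand (short roots $\pm e_i$ or $\pm\tfrac{\sqrt2}{2}(e_i\pm e_j)$ against iops $\{e_i+e_j, e_i-e_j\}$), then transporting $F_4\leftrightarrow E_6$ and $A_{2n-1}, D_n$ similarly. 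The equal-length normalization in the complex-root cases and the strong-orthogonality upgrades (via "$2u$ is not a root") from Corollary~\ref{cor:orthogonalpair} and Prop.~\ref{lem:introH1} will be used repeatedly to guarantee the structure constants $c^2_{\cdot,\cdot}$ appearing are nonzero, which is what makes the "$\equiv$" (rather than merely "$\sqsubseteq$") assertions hold.
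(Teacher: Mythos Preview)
Your overall strategy---reduce to the formulas~\eqref{lem3:cond11}, \eqref{lem3:cond1} and analyze which elements of $P$ are roots---is exactly the paper's approach, and your treatment of the ``no root or zero'' and ``zero'' cases is fine.

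There is, however, a real gap in your handling of item~3. You propose to treat $\pm\{\gamma,-\theta\gamma\}$ as a degenerate {\em iop} and then cite Propositions~\ref{prop:zeroQ}/\ref{prop:f4e6q}. But when $\gamma\in\Delta_{\rm sin}$ the root $\gamma$ is imaginary, so $\theta\gamma=\gamma$ and $\{\gamma,-\theta\gamma\}=\{\gamma,-\gamma\}$: these are \emph{proportional}, not orthogonal, so this is not $R_{x'',y''}$ for any {\em iop} and Propositions~\ref{prop:zeroQ}/\ref{prop:f4e6q} simply do not apply. The paper instead argues directly: assuming $x+\gamma=\alpha+\beta+\gamma$ is a root, one has $\langle\alpha+\beta,\gamma\rangle/|\gamma|^2=-\tfrac12$ (using $|\gamma|=|\alpha+\beta|$), whence one of $\langle\alpha,\gamma\rangle,\langle\beta,\gamma\rangle$ is zero; say $\langle\alpha,\gamma\rangle=0$. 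Since $\gamma$ is imaginary, $\langle\theta\alpha,\gamma\rangle=0$ too, so $\beta+\gamma-\theta\alpha$ is a root and $(\alpha,\beta+\gamma)$ is a \emph{matched pair}, producing the {\em iop} $(x',y')$ whose four roots $R_{x',y'}$ sit in $P$. The remaining four elements of $P$ are then excluded using that $\gamma$ is single (so $(\alpha+\beta,\gamma)$ and $(\alpha-\theta\beta,\gamma)$ cannot be matched). This direct argument is what you would need in place of the degenerate-{\em iop} shortcut.

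For item~2 your plan to verify orthogonality of $\gamma$ to exactly one of $x,y$ by explicit inspection of the coordinate models would work, but the paper gives a uniform and shorter argument you may prefer. In the imaginary-only case, $\gamma\in\Delta_{\rm sh}$ and $x=\alpha+\beta\in\Delta_{\rm long}$, so $\langle\alpha+\beta,\gamma\rangle/|\gamma|^2=-1$; since $\gamma\notin T_{\alpha,\beta}$ rules out $\langle\alpha,\gamma\rangle/|\gamma|^2$ or $\langle\beta,\gamma\rangle/|\gamma|^2$ equaling $\pm1$, both must equal $-\tfrac12$, giving $\langle\alpha-\beta,\gamma\rangle=0$. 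In the complex-root case one normalizes lengths to $\sqrt2$, gets $\langle\alpha+\beta,\gamma\rangle=-1$, takes (say) $\langle\alpha,\gamma\rangle=-1$, and then shows $\langle\theta\beta,\gamma\rangle=-1$ by contradiction: if $\langle\theta\beta,\gamma\rangle\ge0$ then $(\alpha+\gamma,\beta)$ would be a matched pair, forcing $\alpha+\beta+\gamma$ to be single rather than complex. Either way $\langle\gamma,\alpha-\theta\beta\rangle=0$, and strong orthogonality follows since $\alpha-\theta\beta$ is single. This avoids going model-by-model.
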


We provide a proof in Appendix~A.

\paragraph{Computation of \text{$[\Psi_{x, y}, \Psi_{x', y'}]$.}}
Let $(\alpha, \beta)$, $(\alpha',\beta')$ be matched pairs and $(x,y)$, $(x',y')$ be the corresponding {\em iop}s.
 Recall that $Q = R_{x, y} + R_{x', y'}$. If $Q$ does not contain  any root or zero, then $[\Psi_{x, y}, \Psi_{x', y'}] = 0$. We assume in the sequel that $Q$ contains either zero or at least a root. We first have the following result:

\begin{proposition}\label{prop:compzeroQ}
	Suppose that $Q$ contains zero; then, there are two cases: 
	\begin{enumerate}
	\item If $R_{x, y} = R_{x', y'}$, then 
	$
	[\Psi_{x, y},\Psi_{x', y'}] \equiv \{\mathrm{i}(H_\alpha + H_{\theta\alpha}), \mathrm{i}(H_\beta + H_{\theta\beta})\}
	$.
	\item If $R_{x, y} \cap R_{x', y'} = \{\pm z\}$ for some root $z$, then 
	$[\Psi_{x, y},\Psi_{x', y'}] \equiv \{\mathrm{i} H_z\}$.
	\end{enumerate}
\end{proposition}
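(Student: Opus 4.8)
The plan is to reduce the computation to the explicit root-system descriptions from the proof of Theorem~\ref{prop:matchedpairs} and then apply the bracket formulas in Proposition~\ref{lem3}. First observe that since $(x,y)$ and $(x',y')$ are \emph{iop}s, all of $x,y,x',y'$ are imaginary roots (the root system is not $G_2$), so for any imaginary $\gamma$ we have $Y_\gamma = -\sigma_\gamma Y_{-\gamma}$ and $Z_\gamma = \sigma_\gamma Z_{-\gamma}$; consequently $\Psi_{x,y}$ and $\Psi_{x',y'}$ depend only on the sets $R_{x,y}$ and $R_{x',y'}$, which justifies working with these sets throughout. The hypothesis that $Q$ contains zero means (by Proposition~\ref{prop:zeroQ}) that $R_{x,y}\cap R_{x',y'}\neq\varnothing$, and moreover that any two nonproportional roots in $R_{x,y}\cup R_{x',y'}$ are strongly orthogonal. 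This strong orthogonality is the key structural input: it forces most of the brackets $[Y_x\pm Y_y, Y_{x'}\pm Y_{y'}]$ etc.\ to vanish, because $c_{\gamma,\delta}=0$ whenever $\gamma+\delta$ is not a root, and strongly orthogonal roots have neither their sum nor difference a root.

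For case~1, $R_{x,y}=R_{x',y'}$, so up to relabeling we may take $x'=x$, $y'=y$ (the sign choices only permute $\Psi_{x,y}$ up to scalars). Then the nonzero brackets are the ``diagonal'' ones: $[Y_x\pm Y_y, Y_x\mp Y_y]$, $[Y_x\pm Y_y, Z_x\mp Z_y]$, and so on, which by the corollary following Proposition~\ref{lem:introH1} (equation~\eqref{eq:introHHHH}) produce $H_x\pm H_{\theta x}$ and $\mathrm{i}(H_x\pm H_{\theta x})$, together with the analogous terms in $y$. Now, recalling that $(x,y)$ comes from the matched pair $(\alpha,\beta)$ with $x=\alpha+\beta$, $y=\alpha-\theta\beta$, and that $\alpha-\theta\beta$ being imaginary gives $\alpha-\theta\beta = \theta\alpha-\beta$, one computes $x+y = \alpha+\theta\alpha$ reworked appropriately; more to the point, by Corollary~\ref{cor:orthogonalpair} the roots $x$ and $y$ are strongly orthogonal, and by Proposition~\ref{lem:introH1} $\alpha+\theta\alpha$ and $\beta+\theta\beta$ are not roots, so the relevant combinations of $H_x,H_{\theta x},H_y,H_{\theta y}$ collapse. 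The identity $H_{\theta\gamma}=\theta H_\gamma$ together with $x = \alpha+\beta$, $\theta x = \theta\alpha+\theta\beta$, $y=\alpha-\theta\beta$, $\theta y = \theta\alpha-\beta$ yields $H_x+H_{\theta x}$ and $H_y+H_{\theta y}$ both equal (up to the obvious identifications) to expressions in $H_\alpha+H_{\theta\alpha}$ and $H_\beta+H_{\theta\beta}$; after taking the $\mathbf{R}$-span spanned by the four bracket outputs and checking which are proportional, exactly $\mathrm{i}(H_\alpha+H_{\theta\alpha})$ and $\mathrm{i}(H_\beta+H_{\theta\beta})$ survive as independent directions, giving $[\Psi_{x,y},\Psi_{x',y'}]\equiv\{\mathrm{i}(H_\alpha+H_{\theta\alpha}),\mathrm{i}(H_\beta+H_{\theta\beta})\}$.

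For case~2, $R_{x,y}\cap R_{x',y'}=\{\pm z\}$ with $z$ a single root, and the remaining roots $\{x,y,x',y'\}\setminus\{\pm z\}$ are pairwise strongly orthogonal and strongly orthogonal to $z$. Relabel so that $y=x'=z$ (possibly after a sign flip, harmless by the remark above), with $x$ and $y'$ the two ``outer'' roots, mutually strongly orthogonal. Then in $[Y_x\pm Y_z, Y_z\pm Y_{y'}]$ the only surviving term is $[Y_x \text{ or its partner}, Y_z]$ paired against itself, i.e.\ only the $Y_z$-with-$Y_z$, $Y_z$-with-$Z_z$, $Z_z$-with-$Z_z$ brackets are nonzero, since $x$ is strongly orthogonal to both $z$ and $y'$, and likewise $y'$. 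By~\eqref{eq:introHHHH} these produce $H_z - H_{\theta z}$, $\mathrm{i}(H_z+H_{\theta z})$, and $-(H_z-H_{\theta z})$; but $z$ is an imaginary root, so $\theta z = z$ and $H_{\theta z}=H_z$, whence $H_z-H_{\theta z}=0$ and $\mathrm{i}(H_z+H_{\theta z})=2\mathrm{i}H_z$. Therefore $[\Psi_{x,y},\Psi_{x',y'}]\equiv\{\mathrm{i}H_z\}$, as claimed.

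\textbf{Main obstacle.} The routine-but-delicate part is the sign bookkeeping: the four sign choices in each $\Psi_{x,y}$, the $\sigma_\gamma$ factors in~\eqref{lem3:cond1}, and the relations among $c_{\alpha,\beta}$, $c_{\alpha,-\theta\beta}$ under $\theta$ all have to be tracked carefully to confirm that the surviving brackets are precisely the claimed ones and that nothing extra appears. The conceptual work is essentially done once strong orthogonality (Proposition~\ref{prop:zeroQ}) pins down which root sums are roots; the harder step is verifying in case~1 that, after the dust settles, one gets exactly the two independent directions $\mathrm{i}(H_\alpha+H_{\theta\alpha})$ and $\mathrm{i}(H_\beta+H_{\theta\beta})$ rather than, say, also $H_\alpha-H_{\theta\alpha}$ — this relies crucially on $\langle\alpha,\beta+\theta\beta\rangle=0$ from Proposition~\ref{prop:s0s0} and on Proposition~\ref{lem:introH1} ruling out $\alpha+\theta\alpha$, $\beta+\theta\beta$ as roots.
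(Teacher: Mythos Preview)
Your approach matches the paper's: invoke strong orthogonality (Proposition~\ref{prop:zeroQ}, Corollary~\ref{cor:orthogonalpair}) to kill the cross-brackets, then reduce to~\eqref{eq:introHHHH}. Case~2 is fine. Your case~1 account, however, is muddled in a way that obscures the computation. The brackets $[Y_x\pm Y_y,\, Y_x\mp Y_y]$ and $[Z_x\pm Z_y,\, Z_x\mp Z_y]$ that you list among the ``nonzero diagonal'' ones are in fact \emph{zero}: they reduce to multiples of $[Y_x,Y_y]$ and $[Z_x,Z_y]$, which vanish by strong orthogonality of $x$ and $y$. So no terms of the form $H_x-H_{\theta x}$ ever arise, and your worry about $H_\alpha-H_{\theta\alpha}$ appearing is unfounded. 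The only nonzero brackets are of the form $[Y_x+\epsilon Y_y,\, Z_x+\epsilon' Z_y]$; since $x,y$ are imaginary and $\sigma_x=\sigma_y$ (Lemma~\ref{lem:sigmaxsigmay}), these equal $2\mathrm{i}\sigma_x(H_x+\epsilon\epsilon' H_y)$, so $[\Psi_{x,y},\Psi_{x,y}]\equiv\{\mathrm{i}H_x\pm \mathrm{i}H_y\}$.

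The conversion to the stated form then uses $|x|=|y|$ (both are single roots and the system is not $G_2$) together with the identities $h_x+h_y=h_\alpha+h_{\theta\alpha}$ and $h_x-h_y=h_\beta+h_{\theta\beta}$, which follow from $\theta(\alpha+\beta)=\alpha+\beta$---equivalently, from your own observation that $x+y=\alpha+\theta\alpha$ and $x-y=\beta+\theta\beta$. This is exactly the paper's argument. The facts you single out as ``crucial'' in your Main Obstacle, namely $\langle\alpha,\beta+\theta\beta\rangle=0$ and $\alpha+\theta\alpha\notin\Delta$, play no role in this step.
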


\begin{proof}
For item~1, we note that by Corollary~\ref{cor:orthogonalpair}, $x$ and $y$ are strongly orthogonal.  
It follows from~\eqref{eq:introHHHH} that $[\Psi_{x, y}, \Psi_{x,y}] \equiv \{\mathrm{i}H_{\alpha + \beta} \pm \mathrm{i}H_{\alpha - \theta \beta} \}$. By the linearity $h_{\alpha + \beta} = h_\alpha + h_\beta$ and the scaling $H_\alpha = \nicefrac{2h_\alpha}{|\alpha|^2}$, we obtain that 
$$
\left\{
\begin{array}{lll}
\mathrm{i}H_{\alpha + \beta} & = & \nicefrac{|\alpha|^2}{|\alpha + \beta|^2}\, \mathrm{i}H_\alpha + \nicefrac{|\beta|^2}{|\alpha + \beta|^2}\, \mathrm{i} H_\beta, \vspace{3pt}\\ 
\mathrm{i}H_{\alpha -\theta \beta} & = & \nicefrac{|\alpha|^2}{|\alpha - \theta\beta|^2}\, \mathrm{i}H_\alpha - \nicefrac{|\beta|^2}{|\alpha - \theta\beta|^2}\, \mathrm{i} H_{\theta\beta}, 
\end{array}
\right.
$$ 
Since the root system is not $G_2$, we obtain by Theorem~\ref{prop:matchedpairs} that $|\alpha|^2 = |\beta|^2$ and $|\alpha + \beta|^2 = |\alpha - \theta\beta|^2$. Also, note that $\theta\alpha + \theta \beta = \alpha + \beta$. 
Thus, $$[\Psi_{\alpha, \beta}, \Psi_{\alpha',\beta'}] \equiv \{\mathrm{i}H_{\alpha + \beta} \pm \mathrm{i}H_{\alpha - \theta \beta} \} \equiv \{\mathrm{i}(H_{\alpha} + H_{\theta \alpha}),\mathrm{i}(H_{\beta} + H_{\theta \beta})\}.$$

For item~2, we note that by Prop.~\ref{prop:zeroQ}, any two nonproportional roots out of $R_{x,y} \cup R_{x',y'}$ are strongly orthogonal, so $[\Psi_{x,y}, \Psi_{x',y'}]\equiv [Y_{z}, Z_{z}] \equiv \{\mathrm{i}H_{z}\}$.    	
\end{proof}

We next assume that $Q$ contains at least a root. To proceed, 
we need to introduce a few subsets of $\g_0$, which will be used for the case where the underlying root system is exceptional (i.e., $F_4$ or $E_6$ by Prop.~\ref{prop:f4e6q}). 

Recall that by Prop.~\ref{prop:isometry}, $\Delta^{F_4}_{\rm sin} \approx \Delta^{E_6}_{\rm sin}$ and we will simply write $\Delta_{\rm sin}$. 
Let the decomposition $\Delta_{\rm sin} = \cup^3_{i = 1}\Delta_{{\rm sin}_i}$ be given by~\eqref{eq:singleroots}. We write explicitly $\Delta_{{\rm sin}_i}= \pm \{u_i, v_i, s_i, t_i\}$ and define 
$$
\Sigma_i:= \{Y_{u_i} \pm Y_{v_i} \pm Y_{s_i} \pm Y_{t_i}, \,\, Z_{u_i} \pm Z_{v_i} \pm Z_{s_i} \pm Z_{t_i} \}, \quad \forall i = 1,2,3.
$$  
Note that   
$\Sigma_i$ differs up to sign if another set of roots $\{u_i, v_i, s_i, t_i\}$ is used. 

Recall that $
\xi= \langle x, x'\rangle \langle y, y'\rangle \langle x, y'\rangle \langle y, x'\rangle 
$ is defined in~\eqref{eq:defxi}. 
Since $Q$ contains at least a root, it follows from Prop.~\ref{prop:f4e6q} that $\xi\neq 0$. 
Consequently, we have the following result:

\begin{proposition}\label{prop:rootsQ}
	Suppose that $Q$ has at least a root; then, there are two cases:
	\begin{enumerate}
	\item If $\xi > 0$, then there is an iop $(x'',y'')$ such that 
	$[\Psi_{x, y}, \Psi_{x',y'}] \equiv \Psi_{x'',y''}$.
	\item If $\xi < 0$, then the underlying root system can only be $F_4$ or $E_6$. Let $i,j\in \{1,2,3\}$ be such that $R_{x, y}\subset \Delta_{{\rm sin}_i}$ and $R_{x',y'}\subset \Delta_{{\rm sin}_j}$. Then, $i \neq j$ and, moreover,   
$[\Psi_{x, y}, \Psi_{x',y'}] \equiv \Sigma_k$ where $\{k\} := \{1,2,3\}\backslash \{i,j\}$.
	\end{enumerate}
\end{proposition}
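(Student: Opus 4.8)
The plan is to compute $[\Psi_{x,y},\Psi_{x',y'}]$ directly from the bracket formulas of Proposition~\ref{lem3} (specifically~\eqref{lem3:cond1}), using the structural classification of the set $Q = R_{x,y}+R_{x',y'}$ provided by Proposition~\ref{prop:f4e6q}. Since $\Psi_{x,y}=\{Y_x\pm Y_y,\ Z_x\pm Z_y\}$ and $x,y$ are imaginary (Theorem~\ref{prop:matchedpairs} and the remark following Theorem~\ref{prop:orthogonalpairs}, recalling the root system is not $G_2$), every bracket of an element of $\Psi_{x,y}$ with an element of $\Psi_{x',y'}$ is, by~\eqref{lem3:cond1} applied to the four pairs $(x,x'),(x,y'),(y,x'),(y,y')$, a combination of terms $Y_{x+x'}, Y_{x-\theta x'}, Y_{x+y'},\dots$ and similarly with $Z$'s, together with Cartan terms $\mathrm{i}H_\bullet$ coming from the degenerate cases $x+x'=0$ etc. Because we are told $Q$ has at least one root, Proposition~\ref{prop:f4e6q} tells us exactly which sums $x\pm\theta x'$, etc. are roots and what their span is: either (case $\xi>0$) $Q$ reduces to a single $R_{x'',y''}$ for an {\em iop} $(x'',y'')$, or (case $\xi<0$, forcing $F_4$ or $E_6$) $Q$ equals all eight roots $\Delta_{{\rm sin}_k}=\pm\{u_k,v_k,s_k,t_k\}$.

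\smallskip\noindent
For case~1 ($\xi>0$): by Proposition~\ref{prop:f4e6q} the four roots of $Q$ are $R_{x'',y''}=\pm\{x'',y''\}$, so in the sum $x+x'$, $x-\theta x'$, $x+y'$, $x-\theta y'$ (and the corresponding ones starting with $y$) each entry is either $0$, non-root, or one of $\pm x'',\pm y''$; moreover $x'',y''$ are themselves imaginary single roots, so $Y_{\pm x''}$ and $Z_{\pm x''}$ differ only by the sign $\sigma$. Feeding this into~\eqref{lem3:cond1} and collecting, one finds that $[Y_x\pm Y_y, Y_{x'}\pm Y_{y'}]$ and the $Y$–$Z$, $Z$–$Z$ variants span, up to scalars, exactly $\{Y_{x''}\pm Y_{y''},\ Z_{x''}\pm Z_{y''}\}=\Psi_{x'',y''}$ — here one uses that the nonzero Chevalley constants $c_{\cdot,\cdot}$ entering are $\pm$ equal in absolute value (as in the proof of Proposition~\ref{prop:s0s0}, since the relevant $\alpha$-strings have length two), so that no cancellation collapses $\Psi_{x'',y''}$ to something smaller, while the degenerate ``zero'' cases do not occur because $Q$ containing a root rules out $R_{x,y}\cap R_{x',y'}\neq\varnothing$ in the relevant subcases (or, if one prefers, they contribute Cartan elements already accounted for). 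For case~2 ($\xi<0$, root system $F_4$ or $E_6$): now $R_{x,y}\subset\Delta_{{\rm sin}_i}$, $R_{x',y'}\subset\Delta_{{\rm sin}_j}$ with $i\neq j$ and $Q=\Delta_{{\rm sin}_k}=\pm\{u_k,v_k,s_k,t_k\}$. The bracket of each of the four generators of $\Psi_{x,y}$ with each of the four of $\Psi_{x',y'}$ is, by~\eqref{lem3:cond1}, a signed sum of two $Y$'s (resp. two $Z$'s) indexed by roots in $\Delta_{{\rm sin}_k}$; running over the four sign-choices on each side one obtains, up to scalars, precisely the $16$ vectors $Y_{u_k}\pm Y_{v_k}\pm Y_{s_k}\pm Y_{t_k}$ and their $Z$-analogues, i.e. $\Sigma_k$. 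The main work is to verify, from the explicit coordinates in~\eqref{eq:singleroots} together with the normalized Chevalley constants, that the four possible root sums land on the four distinct roots $u_k,v_k,s_k,t_k$ with the correct sign pattern so that all $16$ combinations $\Sigma_k$ are reached (and nothing more) — this is bookkeeping but must be done carefully so that the $\pm$ signs in $\Sigma_k$ all appear and $\Sigma_k$ is not accidentally a proper subset.

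\smallskip\noindent
\textbf{The main obstacle} I anticipate is precisely this sign-tracking in case~2: one must show that as the four independent sign choices in $Y_x\pm Y_y$ and $Y_{x'}\pm Y_{y'}$ vary, the resulting four-term sums exhaust all $16$ sign patterns $Y_{u_k}\pm Y_{v_k}\pm Y_{s_k}\pm Y_{t_k}$ rather than, say, only the eight with an even number of minus signs. This requires knowing that the two Chevalley constants appearing in any single bracket $[Y_x,Y_{x'}]$ are genuinely independent sign data across the different pairs — equivalently, that the map from sign-choices to sign-patterns is surjective — and the cleanest way to see it is to note that $\Psi_{x,y}$ and $\Psi_{x',y'}$ are each (up to scalars) closed under flipping the relative sign, i.e. they already contain $Y_x+Y_y$ \emph{and} $Y_x-Y_y$, so that both $[Y_x+Y_y,\,\cdot\,]+[Y_x-Y_y,\,\cdot\,]\propto[Y_x,\,\cdot\,]$ and the difference $\propto[Y_y,\,\cdot\,]$ lie in the span, reducing everything to the four ``pure'' brackets $[Y_x,Y_{x'}]$, $[Y_x,Y_{y'}]$, $[Y_y,Y_{x'}]$, $[Y_y,Y_{y'}]$ and their $Z$-partners; each of these is a two-term vector in $\Sigma_k$-coordinates, and the four of them are easily checked (case by case in $F_4$, transported to $E_6$ by Proposition~\ref{prop:isometry}) to span the same space as $\Sigma_k$. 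I would conclude by stating that in both cases $[\Psi_{x,y},\Psi_{x',y'}]$ equals, up to the $\R$-scaling relation $\equiv$, the asserted set, and that the hypothesis ``$Q$ has at least a root'' together with Proposition~\ref{prop:f4e6q} forces exactly the dichotomy on $\xi$, with $\xi<0$ occurring only for $F_4$ and $E_6$.
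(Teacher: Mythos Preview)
Your outline has the right skeleton, but there is a genuine gap in case~1. After normalizing so that all four inner products in $\xi$ equal $-1$ (so $x+x'=-(y+y')=:x''$ and $x+y'=-(y+x')=:y''$), one has $[Y_x+\epsilon Y_y,\,Y_{x'}+\epsilon'Y_{y'}]=\mu_1 Y_{x''}+\mu_2 Y_{y''}$ with $\mu_1=c_{x,x'}-\sigma_{x''}\epsilon\epsilon'\, c_{y,y'}$ and $\mu_2=\epsilon'c_{x,y'}-\sigma_{y''}\epsilon\, c_{y,x'}$. Knowing only $c_{\cdot,\cdot}^2=1$ gives $\mu_1,\mu_2\in\{0,\pm2\}$ but \emph{not} $|\mu_1|=|\mu_2|$; if for some $\epsilon,\epsilon'$ one had $\mu_1=2$ and $\mu_2=0$, the bracket would be proportional to $Y_{x''}$ alone, which is not in $\Psi_{x'',y''}$, and $[\Psi_{x,y},\Psi_{x',y'}]\sqsubseteq\Psi_{x'',y''}$ would fail. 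The paper supplies the two ingredients you are missing: (i) $\sigma_{x''}=\sigma_{y''}$ for any {\em iop}, and (ii) the Chevalley-constant identity $c_{x,x'}c_{y,y'}=c_{x,y'}c_{y,x'}$, obtained from the relation
\[
\frac{c_{s,t}c_{u,v}}{|s+t|^{2}}+\frac{c_{t,u}c_{s,v}}{|t+u|^{2}}+\frac{c_{u,s}c_{t,v}}{|u+s|^{2}}=0
\]
(valid when $s+t+u+v=0$ and no two sum to zero) applied to $(x,x',y,y')$, using that $x+y$ and $x'+y'$ are non-roots. Together these force $\mu_1^2=\mu_2^2$ and $\nu_1^2=\nu_2^2$; your appeal to ``equal absolute values, as in Proposition~\ref{prop:s0s0}'' does not.

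For case~2 your ``reduce to pure brackets and check the span'' argument uses the wrong criterion: $S\equiv S'$ means $S$ and $S'$ determine the same set of one-dimensional subspaces, not the same linear span (e.g.\ $\{e_1,e_2\}\not\equiv\{e_1+e_2,e_1-e_2\}$). What must be checked is that each bracket $[Y_x+\epsilon Y_y,\,Y_{x'}+\epsilon'Y_{y'}]$ is, up to overall sign, literally of the form $Y_{u_k}\pm Y_{v_k}\pm Y_{s_k}\pm Y_{t_k}$ --- which holds because each of the four pure brackets contributes exactly one nonzero term with coefficient $\pm1$ (in simply-laced only one of $x\pm x'$ is a root) --- and then that varying $(\epsilon,\epsilon')$ over $\{\pm1\}^2$ together with the $[Z,Z]$ brackets hits all eight sign patterns. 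That is mechanical once the four-term formula is written out, but it is not a span statement.
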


We provide a proof in Appendix~B. 

It follows from  
Propositions~\ref{prop:onepaironeroot}, ~\ref{prop:compzeroQ} and~\ref{prop:rootsQ} that  
if the underlying root system is classical, then $S_2 \equiv S_1$. Item~2 of Theorem~\ref{thm:allcases} is then established. However, if the underlying root system is  $F_4$ or $E_6$, then $S_2 \equiv S_1 \cup \bigcup^3_{i = 1}\Sigma_i$.

\subsection{Computation of $S_3$: On root systems $F_4$ and $E_6$}\label{ssec:computes3}

We consider here two cases: Either the root system is $F_4$ and $\Delta$ contains only imaginary roots, or, the root system is $E_6$ and $\Delta$ contains complex roots.  
To compute $S_3 = [S_2, S_2]$, it suffices to fix an $i= 1,2,3$ and compute the following: 
(1) $[\Sigma_i, \Phi_\alpha]$ for an arbitrary root $\alpha$, (2) $[\Sigma_i, \Psi_{x, y}]$ for an {\em iop} $(x, y)$, and (3) $[\Sigma_i, \Sigma_j]$ for $j = 1,2,3$. 
The computation is, in fact, on the level of single roots. 
By Prop.~\ref{prop:f4e6q}, $\Delta^{F_4}_{\rm sin}$ and $\Delta^{E_6}_{\rm sin}$ are essentially the same.

First, for $[\Sigma_i, \Phi_\alpha]$, we have the following results:

\begin{proposition}\label{prop:computesigmaphi}
The following hold for $[\Sigma_i, \Phi_\alpha]$:
\begin{enumerate}
\item   If $\alpha\in \Delta_{{\rm sin}_i}$, then $[\Sigma_i, \Phi_\alpha] \equiv \Phi_\alpha$.
\item 	If $\alpha\in \Delta_{{\rm sin}_j}$ for $j\neq i$, then $[\Sigma_i, \Phi_\alpha]\equiv \Sigma_i \cup \Sigma_k$ where $\{k\}:= \{1,2,3\} \backslash \{i,j\}$.
\item 	If  $\alpha\in \Delta \backslash \Delta_{\rm sin}$, then there is another root $\beta\in \Delta \backslash \Delta_{\rm sin}$, unique up to $-\theta$, such that $(\alpha, \beta)$ is a matched pair. Moreover, if we let $(x,y)$ be the corresponding {\em iop}, then $[\Sigma_i, \Phi_\alpha] \equiv \Psi_{x,y}\cup \{Y_{-\beta}, Z_{-\beta}\}$.
\end{enumerate}
\end{proposition}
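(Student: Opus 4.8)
The plan is to reduce everything to explicit bracket computations on the single-root level, using the structure constants recorded in Proposition~\ref{lem3} together with the concrete descriptions of $\Delta_{\rm sin}$, the $\Delta_{{\rm sin}_i}$, and the isometry $\pi_{F_4\to E_6}$ from Proposition~\ref{prop:isometry}. Since $\Sigma_i = \{\sum_{w\in\Delta_{{\rm sin}_i}^+} \pm Y_w,\ \sum_{w\in\Delta_{{\rm sin}_i}^+}\pm Z_w\}$ (writing $\Delta_{{\rm sin}_i} = \pm\{u_i,v_i,s_i,t_i\}$), the bracket $[\Sigma_i,\Phi_\alpha]$ is a signed sum of terms $[Y_w,\Phi_\alpha]$ and $[Z_w,\Phi_\alpha]$ over the four roots $w$ in the positive half of $\Delta_{{\rm sin}_i}$, and each such term is given by \eqref{lem3:cond11}--\eqref{lem3:cond1}. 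The key is to track which of $w\pm\alpha$ and $w\pm\theta\alpha$ land in $\Delta$, and this is a finite check in each of the three cases for $\alpha$.

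For item~(1), $\alpha\in\Delta_{{\rm sin}_i}$: here $\alpha$ is proportional or orthogonal to each $w\in\Delta_{{\rm sin}_i}$ (by the defining property of the decomposition \eqref{eq:singleroots}), so $w+\alpha$ and $w-\alpha$ are never roots unless $w=\pm\alpha$. Also all these roots are imaginary (single), so $\theta$ fixes each of them and $\theta\alpha=\alpha$. Thus in the signed sum only the $w=\pm\alpha$ term survives, and by \eqref{lem3:cond11} and \eqref{eq:introHHHH} this gives a nonzero multiple of $H_\alpha - H_{\theta\alpha}$, $\mathrm i(H_\alpha+H_{\theta\alpha})$, $Y_\alpha$, $Z_\alpha$, i.e.\ $[\Sigma_i,\Phi_\alpha]\equiv\Phi_\alpha$; one must note that $\langle\alpha,w\rangle=0$ for $w\neq\pm\alpha$ kills the would-be cross terms coming from the Cartan-valued brackets, so no extra elements appear.

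For item~(2), $\alpha\in\Delta_{{\rm sin}_j}$ with $j\neq i$: now for each $w\in\Delta_{{\rm sin}_i}$, the sum $w+\alpha$ (resp.\ $w-\alpha$) lies in $\Delta_{{\rm sin}_k}$ exactly when $\langle w,\alpha\rangle<0$ (resp.\ $>0$) — this is the $B_n/D_{n+1}$-type "$e_i\pm e_j$" addition underlying the $F_4$/$E_6$ geometry, and it is precisely the situation analyzed in Proposition~\ref{prop:f4e6q}\,(2)/\,\ref{prop:rootsQ}\,(2). Collecting the signed sum of the four $[Y_w,Y_\alpha]$ (and the $Z$-analogues) over $w\in\Delta_{{\rm sin}_i}^+$ produces, on the one hand, the diagonal terms $w=\pm$ (which, since $\langle w,\alpha\rangle=0$ for two of the four $w$ and $=\pm1$ for the other two — here I would just read this off the explicit coordinates) regenerate $\Sigma_i$ via the $H$-bracket relations \eqref{lem3:cond11}, and on the other hand the off-diagonal terms assemble into $Y_{w\pm\alpha}\in\pm\Delta_{{\rm sin}_k}$, giving $\Sigma_k$. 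The bookkeeping that the eight roots $\{w\pm\alpha\}$ are exactly $\pm\{u_k,v_k,s_k,t_k\}$ each hit once, with the structure constants $c_{w,\alpha}=\pm1$, so that the signed combination is (up to overall scalar) again of the form $\sum\pm Y_{(\cdot)}$, is the content that makes "$\equiv\Sigma_i\cup\Sigma_k$" rather than something larger; I expect this to be the most delicate part of the argument and would verify it by direct computation in the $F_4$ coordinates, transporting to $E_6$ via $\pi_{F_4\to E_6}$.

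For item~(3), $\alpha\in\Delta\setminus\Delta_{\rm sin}$ (a short root of $F_4$, resp.\ a complex root of $E_6$): by Theorem~\ref{prop:matchedpairs}, $\alpha$ is non-single, so there exists $\beta$ with $(\alpha,\beta)$ matched; I would show $\beta$ is forced to lie in $\Delta\setminus\Delta_{\rm sin}$ and is unique up to $-\theta$ by inspecting the matched-pair description in the proof of Theorem~\ref{prop:matchedpairs} (cases (c) and (g)) together with the constraint that $\alpha+\beta=x$ and $\alpha-\theta\beta=y$ must land in the fixed block $\Delta_{{\rm sin}_i}$ — the block $i$ pins down which $\beta$ (up to the $\theta$-ambiguity reflected in the "$\{Y_{-\beta},Z_{-\beta}\}$" term). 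Then, expanding $[\Sigma_i,\Phi_\alpha]$: among the four roots $w\in\Delta_{{\rm sin}_i}^+$, exactly the pair $\{x,y\}$ contributes via $[Y_x,Y_\alpha]$, $[Y_y,Y_\alpha]$, etc., because $\langle w,\alpha\rangle$ and $\langle w,\theta\alpha\rangle$ are nonzero only for $w\in\{x,y\}$ (this is where Corollary~\ref{cor:orthogonalpair} and Proposition~\ref{prop:orthogonalpairs} are used: the other two roots in $\Delta_{{\rm sin}_i}$ are strongly orthogonal to both $\alpha$ and $\theta\alpha$, hence $w\pm\alpha$, $w\pm\theta\alpha\notin\Delta$). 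Using $x-\alpha=\beta$, $x+\alpha\notin\Delta$, $y+\theta\alpha=\alpha+\beta-\theta\alpha$... — more carefully, $x-\alpha=\beta$ and $y-(-\theta\alpha)=\alpha-\theta\beta+\theta\alpha$ — I would compute that the surviving brackets are multiples of $Y_{-\beta}$, $Z_{-\beta}$ (from subtracting $\alpha$ off $x$) and of $Y_y,Z_y$ or $Y_x,Z_x$, i.e.\ of $\Psi_{x,y}$. Assembling: $[\Sigma_i,\Phi_\alpha]\equiv\Psi_{x,y}\cup\{Y_{-\beta},Z_{-\beta}\}$, as claimed. Throughout, the recurring subtlety — and the thing I'd be most careful about — is that "$\equiv$" demands we neither lose an element (so one must check a genuinely nonzero structure constant survives the signed summation, not an accidental cancellation) nor gain one (so one must check the orthogonality relations that force spurious terms to vanish); both are handled by the explicit root-coordinate descriptions already set up, so no new idea is needed beyond careful case analysis.
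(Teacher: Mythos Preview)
Your overall plan is the same as the paper's: reduce everything to the explicit $F_4$ single-root geometry via $\pi_{F_4\to E_6}$, then compute brackets term by term using \eqref{lem3:cond11}--\eqref{lem3:cond1}. Item~(1) is fine and matches the paper verbatim.

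However, your item~(2) sketch contains two factual slips about the root geometry that would derail the computation. First, it is \emph{not} true that $\langle w,\alpha\rangle = 0$ for two of the four $w\in\{u_i,v_i,s_i,t_i\}$; in fact $\langle w,\alpha\rangle^2$ is the \emph{same} nonzero value for all four $w$ (check e.g.\ $\alpha = e_1+e_4$ against each element of $\Delta_{{\rm sin}_1}$). This is precisely why $[\Sigma_i,\mathrm{i}H_\alpha]\equiv\Sigma_i$: the scalars all have equal absolute value, so the signed sum stays in $\Sigma_i$. Second, there are not eight roots among $\{w\pm\alpha : w\in\Delta_{{\rm sin}_i}^+\}$ but exactly four --- for each $w$, precisely one of $w+\alpha,\,w-\alpha$ is a root --- and these four are (up to sign) $u_k,v_k,s_k,t_k$. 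Once you correct these two points, the rest of your bookkeeping goes through exactly as in the paper.

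For item~(3), your outline is correct but unnecessarily tangled at the end. The paper streamlines it: after establishing that $\alpha$ is strongly orthogonal to the two roots of $\Delta_{{\rm sin}_i}\setminus R_{x,y}$, one has $[\Sigma_i,\Phi_\alpha]\equiv[\Psi_{x,y},\Phi_\alpha]$, and the latter is already computed in item~1 of Proposition~\ref{prop:onepaironeroot} (taking $\gamma=\alpha\in T_{\alpha,\beta}$), which gives $\Psi_{x,y}\cup\{Y_{-\beta},Z_{-\beta}\}$ directly. Invoking that earlier result avoids the ad~hoc tracking of $x-\alpha$, $y+\theta\alpha$, etc.\ that you sketch.
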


Next, for $[\Sigma_i, \Psi_{x, y}]$, we have the following results:

\begin{proposition}\label{prop:computesigmapsi} 
Let $(\alpha, \beta)$ be a matched pair and $(x,y)$ be the corresponding {\em iop}. Then, the following hold for $[\Sigma_i, \Psi_{x, y}]$:
\begin{enumerate} 
\item If $R_{x,y} \subset \Delta_{{\rm sin}_i}$, then
$[\Sigma_i, \Psi_{x, y}] \equiv \{\mathrm{i}(H_\alpha + H_{\theta\alpha}), \mathrm{i}(H_\beta + H_{\theta\beta})\}$.
 \item If $R_{x, y} \subset \Delta_{{\rm sin}_j}$ for $j\neq i$, then there are {\em iop}s $(x', y')$ and $(x'',y'')$ such that $R_{x',y'} \cup R_{x'',y''} = \Delta_{{\rm sin}_k}$ for $\{k\} := \{1,2,3\} \backslash \{i,j\}$ and, moreover, 
 $[\Sigma_i, \Psi_{x,y}]\equiv \Sigma_k \cup \Psi_{x', y'} \cup \Psi_{x'', y''}$.  
  \end{enumerate}
\end{proposition}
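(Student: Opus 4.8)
The plan is to reduce to the root system $F_4$ and then expand the brackets coordinate by coordinate. By Proposition~\ref{prop:isometry} the set of single roots and its decomposition $\Delta_{\rm sin}=\Delta_{{\rm sin}_1}\cup\Delta_{{\rm sin}_2}\cup\Delta_{{\rm sin}_3}$ from~\eqref{eq:singleroots} agree, as inner-product spaces carrying distinguished iops, for $F_4$ and for $E_6$; so I would work in the explicit $F_4$ model, where $\Delta_{\rm sin}=\Delta_{\rm long}=\{\pm e_i\pm e_j\}$, and recall (Remark after Theorem~\ref{prop:orthogonalpairs}) that the iop roots $x,y$ are single, hence imaginary, and that by Theorem~\ref{prop:orthogonalpairs}(c) an iop consists of two orthogonal roots lying in one common class $\Delta_{{\rm sin}_m}$. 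Three ingredients, all immediate from earlier material, do the work: \textbf{(i)} for an imaginary root $\gamma$ one has $[Y_\gamma,Y_\gamma]=[Z_\gamma,Z_\gamma]=0$ and, from~\eqref{eq:introHHHH} together with $Z_{-\gamma}=\sigma_\gamma Z_\gamma$, $[Y_\gamma,Z_\gamma]=2\mathrm{i}\sigma_\gamma H_\gamma$; and for imaginary $\delta\neq\pm\gamma$ the brackets $[Y_\gamma,Y_\delta]$, $[Y_\gamma,Z_\delta]$, $[Z_\gamma,Z_\delta]$ are those of~\eqref{lem3:cond1} with $\theta\delta=\delta$, so at most one of $\gamma\pm\delta$ contributes and it contributes a single root vector; \textbf{(ii)} the geometric dichotomy: for $w\in\Delta_{{\rm sin}_i}$ and $x$ an iop root in $\Delta_{{\rm sin}_j}$, if $i=j$ then $w,x$ are proportional or orthogonal and $w\pm x\notin\Delta$ unless $w=\pm x$, while if $i\neq j$ then $\langle w,x\rangle=\pm1$, exactly one of $w\pm x$ is a root, that root lies in $\Delta_{{\rm sin}_k}$ with $\{k\}=\{1,2,3\}\setminus\{i,j\}$, and $w\mapsto(\text{that root})$ is a $\pm$-equivariant bijection $\Delta_{{\rm sin}_i}\to\Delta_{{\rm sin}_k}$; \textbf{(iii)} the Cartan identities $H_x+H_y=H_\alpha+H_{\theta\alpha}$ and $H_x-H_y=H_\beta+H_{\theta\beta}$ (up to a positive scalar that $\equiv$ absorbs), which follow from $h_{\alpha+\beta}=h_\alpha+h_\beta$, $H_\gamma=2h_\gamma/|\gamma|^2$, the length normalizations, and the imaginarity of $x=\alpha+\beta$ and $y=\alpha-\theta\beta$ (which gives $\theta\alpha+\theta\beta=\alpha+\beta$).

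For item~1, assume $R_{x,y}\subset\Delta_{{\rm sin}_i}$ and, relabelling, $x=u_i$, $y=v_i$. Expanding $[\zeta,\eta]$ for $\zeta\in\Sigma_i$, $\eta\in\Psi_{x,y}$ by bilinearity, every summand is a bracket of $Y_w$ or $Z_w$ ($w\in\{u_i,v_i,s_i,t_i\}$) with one of $Y_x,Z_x,Y_y,Z_y$; by~(i)--(ii) all such summands vanish except the diagonal ones $[Y_x,Z_x]=2\mathrm{i}\sigma_xH_x$ and $[Y_y,Z_y]=2\mathrm{i}\sigma_yH_y$. Hence $[\Sigma_i,\Psi_{x,y}]$ consists of $0$ and the elements $\pm2\mathrm{i}(\sigma_xH_x\pm\sigma_yH_y)$; since $\sigma_x,\sigma_y\in\{1,-1\}$, the set $\{2\mathrm{i}(\sigma_xH_x+\sigma_yH_y),\,2\mathrm{i}(\sigma_xH_x-\sigma_yH_y)\}$ coincides, up to scalars, with $\{\mathrm{i}(H_x+H_y),\mathrm{i}(H_x-H_y)\}$, so $[\Sigma_i,\Psi_{x,y}]\equiv\{\mathrm{i}(H_x+H_y),\mathrm{i}(H_x-H_y)\}$, and~(iii) rewrites this as $\{\mathrm{i}(H_\alpha+H_{\theta\alpha}),\mathrm{i}(H_\beta+H_{\theta\beta})\}$.

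For item~2, assume $R_{x,y}\subset\Delta_{{\rm sin}_j}$ with $j\neq i$, and set $\{k\}=\{1,2,3\}\setminus\{i,j\}$. Now in every expanded summand $w\in\Delta_{{\rm sin}_i}$ while $x$ (or $y$) lies in $\Delta_{{\rm sin}_j}$, so by~(i)--(ii) each summand is a single root vector $Y_\delta$ or $Z_\delta$ with $\delta\in\Delta_{{\rm sin}_k}$, and there are no Cartan contributions. Using that $w\mapsto\delta_{w,x}$ and $w\mapsto\delta_{w,y}$ are $\pm$-equivariant bijections onto $\Delta_{{\rm sin}_k}$, and rewriting $Y_{-\gamma}=-\sigma_\gamma Y_\gamma$, one sees that each bracket $[\zeta,\eta]$ is, over the four representatives $\rho$ of $\Delta_{{\rm sin}_k}$, of the form $\sum_\rho(a_\rho\pm\mu b_\rho)Y_\rho$ (or the same with $Z$ in place of $Y$), where $a_\rho,b_\rho\in\{1,-1\}$ depend on the sign pattern of $\zeta$ and on structure-constant signs, and $\mu\in\{1,-1\}$ comes from $\Psi_{x,y}$. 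When all four coefficients are nonzero the bracket is, up to overall sign, an element of $\Sigma_k$; when exactly two of them vanish it is $\pm Y_\rho\pm Y_{\rho'}$ for a pair of orthogonal representatives, i.e. an element of $\Psi_{\rho,\rho'}$. The remaining task — running over the finitely many sign patterns of $\zeta$ and the sign $\mu$, and controlling the structure-constant signs via $\sigma_{\theta\alpha}=\sigma_\alpha$, $c_{\theta\alpha,\theta\beta}=\sigma_\alpha\sigma_\beta\sigma_{\alpha+\beta}c_{\alpha,\beta}$ and $c_{\alpha,\beta}=-c_{-\alpha,-\beta}$ together with the explicit $F_4$ roots — shows that the ``full'' brackets realize all of $\Sigma_k$ and the ``degenerate'' ones realize exactly $\Psi_{x',y'}\cup\Psi_{x'',y''}$ for the specific pair of iops with $R_{x',y'}\cup R_{x'',y''}=\Delta_{{\rm sin}_k}$ singled out by the root data. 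This last point is where I expect the main difficulty: dimension counting alone leaves three candidate iop-partitions of $\Delta_{{\rm sin}_k}$, and identifying the one that occurs — and verifying that both of its $\Psi$'s, and nothing else, are produced — is precisely the sign bookkeeping just described; everything else is a routine expansion.
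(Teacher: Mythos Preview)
Your treatment of item~1 is fine and matches the paper's: once you note that $s_i,t_i$ are strongly orthogonal to $x,y$, only the diagonal brackets survive, and the conversion $\{\mathrm{i}H_x\pm\mathrm{i}H_y\}\equiv\{\mathrm{i}(H_\alpha+H_{\theta\alpha}),\mathrm{i}(H_\beta+H_{\theta\beta})\}$ is exactly what the paper obtains by quoting Proposition~\ref{prop:compzeroQ}.

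For item~2, however, there is a real gap. You correctly reduce to $F_4$, correctly observe that every elementary bracket lands in a single root space of $\Delta_{{\rm sin}_k}$, and correctly identify that the whole difficulty is ``controlling the structure-constant signs''. But the tools you list for this --- $\sigma_{\theta\alpha}=\sigma_\alpha$, $c_{\theta\alpha,\theta\beta}=\sigma_\alpha\sigma_\beta\sigma_{\alpha+\beta}c_{\alpha,\beta}$, and $c_{\alpha,\beta}=-c_{-\alpha,-\beta}$ --- are not enough. All the roots involved are single, hence imaginary, so $\theta$ fixes each of them and your second identity collapses to the tautology $c_{\alpha,\beta}=\sigma_\alpha\sigma_\beta\sigma_{\alpha+\beta}c_{\alpha,\beta}$. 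You are left with no relation between, say, $c_{u_1,u_2}$ and $c_{v_1,v_2}$, and without such relations the four coefficients in each bracket are genuinely independent signs; you cannot conclude which sign patterns occur, let alone that the degenerate brackets pick out a \emph{specific} pair of iops in $\Delta_{{\rm sin}_k}$.

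What the paper uses here, and what your proposal is missing, is the Helgason-type identity (Lemma~\ref{lem:auxiliary}): for roots $s,t,u,v$ with $s+t+u+v=0$ and no two summing to zero,
\[
\frac{c_{s,t}c_{u,v}}{|s+t|^2}+\frac{c_{t,u}c_{s,v}}{|t+u|^2}+\frac{c_{u,s}c_{t,v}}{|u+s|^2}=0.
\]
Applied to quadruples of single roots in $F_4$ this yields a list of quadratic identities such as $c_{u_1,u_2}c_{v_1,v_2}=c_{u_1,-v_2}c_{v_1,-u_2}$ (the paper's Lemma~\ref{lem:equalitiesforc}). With these in hand the coefficients $\mu_i,\nu_i$ collapse to the simple forms~\eqref{eq:c**reduced}, from which one reads off both $\Sigma_k$ and the particular $\Psi_{x',y'}\cup\Psi_{x'',y''}$. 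Without this Jacobi-derived input the sign bookkeeping you describe cannot be carried out.
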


Finally, for $[\Sigma_i, \Sigma_j]$, we have the following results:

\begin{proposition}\label{prop:computesigmasigma} 
The following hold for $[\Sigma_i,\Sigma_j]$:
\begin{enumerate}
\item If $j = i$, then 
 $[\Sigma_i,\Sigma_j] \equiv \{\mathrm{i} H_x \mid x\in \Delta_{\rm sin} \backslash \Delta_{{\rm sin}_i} \}$.

\item If $j\neq i$, then $[\Sigma_i, \Sigma_j] \equiv \Sigma_k \cup \bigcup_{x\in \Delta_{{\rm sin}_k}}\{Y_x, Z_x\}$ for $\{k\}:= \{1,2,3\} \backslash \{i,j\}$. 
\end{enumerate}
\end{proposition}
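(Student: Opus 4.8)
The plan is to reduce Proposition~\ref{prop:computesigmasigma} to a finite computation on the single roots of the root system $F_4$. By Proposition~\ref{prop:isometry} and Proposition~\ref{prop:f4e6q}, the linear isometry $\pi_{F_4\to E_6}$ carries $\Delta^{F_4}_{\rm sin}$, its decomposition \eqref{eq:singleroots}, and all iops onto the corresponding $E_6$-data; moreover a sum of two single roots is a root precisely when it is a single root (single roots are imaginary by Theorem~\ref{prop:matchedpairs}(3)), so the predicate ``$a,b$ single and $a+b$ a root'' transports through $\pi_{F_4\to E_6}$, and it suffices to treat $F_4$ with $\Delta_{\rm long}$ as in \eqref{eq:F4longroots}. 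Since single roots are imaginary, $\theta$ fixes them and $H_{-\gamma}=-H_\gamma$, $Y_{-\gamma}=-\sigma_\gamma Y_\gamma$, $Z_{-\gamma}=\sigma_\gamma Z_\gamma$. The first step is the following trichotomy for a pair of single roots $a,b$ drawn from the class representatives $\{u_i,v_i,s_i,t_i\}$: (i) if $a=b$ then $[Y_a,Y_a]=[Z_a,Z_a]=0$ while $[Y_a,Z_a]\equiv\mathrm{i}H_a$ (by \eqref{eq:introHHHH} and $Z_{-a}=\sigma_a Z_a$); (ii) if $a,b$ lie in the same $\Delta_{{\rm sin}_i}$ with $a\ne b$, they are orthogonal, $a\pm b$ is too long to be a root, and $[Y_a,Y_b]=[Y_a,Z_b]=[Z_a,Y_b]=[Z_a,Z_b]=0$ by \eqref{lem3:cond1}; (iii) if $a\in\Delta_{{\rm sin}_i}$ and $b\in\Delta_{{\rm sin}_j}$ with $i\ne j$, then $\langle a,b\rangle=\pm1$, exactly one of $a+b$, $a-b$ is a root $\delta$, inspection of \eqref{eq:singleroots} gives $\delta\in\Delta_{{\rm sin}_k}$ with $\{k\}=\{1,2,3\}\backslash\{i,j\}$, and by \eqref{lem3:cond1} each of $[Y_a,Y_b]$, $[Z_a,Z_b]$ equals $\pm Y_\delta$ while each of $[Y_a,Z_b]$, $[Z_a,Y_b]$ equals $\pm Z_\delta$ (the Chevalley constant is nonzero, the $a$-string through $b$ having $q=0$).

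For item~1, write a $Y$-type element of $\Sigma_i$ as $W=\sum_a\epsilon_aY_a$ and a $Z$-type element as $W'=\sum_b\epsilon'_bZ_b$, sums over $\{u_i,v_i,s_i,t_i\}$ with $\epsilon_{u_i}=\epsilon'_{u_i}=1$. By (i)--(ii), $[W_Y,W'_Y]=[W_Z,W'_Z]=0$, and $[W,W']=\sum_a\epsilon_a\epsilon'_a[Y_a,Z_a]$ is, up to a nonzero real scalar, $\mathrm{i}(H_{u_i}\pm H_{v_i}\pm H_{s_i}\pm H_{t_i})$, with all eight sign patterns realized. Identifying $H_\gamma$ with $\gamma$ through the Killing form, a direct inspection of \eqref{eq:singleroots} shows that these eight vectors are exactly $\Delta_{\rm sin}\backslash\Delta_{{\rm sin}_i}$ up to nonzero real scalars (for $i=1$ one gets the directions $e_1\pm e_3$, $e_1\pm e_4$, $e_2\pm e_3$, $e_2\pm e_4$, i.e.\ $\Delta_{{\rm sin}_2}\cup\Delta_{{\rm sin}_3}$). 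Hence $[\Sigma_i,\Sigma_i]\equiv\{\mathrm{i}H_x\mid x\in\Delta_{\rm sin}\backslash\Delta_{{\rm sin}_i}\}$.

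For item~2, fix $i\ne j$ and $\{k\}=\{1,2,3\}\backslash\{i,j\}$. By (iii), for each of the three types $(Y,Y)$, $(Y,Z)$, $(Z,Z)$ the bracket $[W,W']$ of $W\in\Sigma_i$, $W'\in\Sigma_j$ is a linear combination of $Y_{u_k},\dots,Y_{t_k}$ (resp.\ $Z_{u_k},\dots,Z_{t_k}$); the coefficient of a given $\delta\in\Delta_{{\rm sin}_k}$ is a four-term signed sum over the pairs $(a,b)$ with $\delta(a,b)=\pm\delta$, and $(a,b)\mapsto\delta(a,b)$ is a Latin square on the $4\times4$ grid of representatives (each $\delta$ has four preimages, one per row and one per column). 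Running over all sign choices in $W$ and $W'$, and tracking the $\epsilon$'s, the $\sigma$'s, and the Chevalley signs, one verifies that the coefficient vectors realized are precisely the ones with all four entries nonzero of equal absolute value (these give $\Sigma_k$) and the ones with a single nonzero entry (these give $Y_x$ or $Z_x$ for $x\in\Delta_{{\rm sin}_k}$); conversely every element of $\Sigma_k\cup\bigcup_{x\in\Delta_{{\rm sin}_k}}\{Y_x,Z_x\}$ is attained. This is the assertion of item~2.

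The main obstacle is the sign bookkeeping in item~2: one must be certain that the four-term signed sums giving the $Y_\delta$- and $Z_\delta$-coefficients of $[W,W']$ never produce an intermediate pattern such as a two-term combination $Y_{u_k}+Y_{v_k}$, which would lie outside $\Sigma_k\cup\bigcup_x\{Y_x,Z_x\}$ and would break $\sqsubseteq$, and dually that each target element is hit. Both are finite checks on the $F_4$ root system once a Chevalley basis is fixed, but they carry the weight of the argument; the same kind of bookkeeping already underlies Proposition~\ref{prop:rootsQ} in the case $\xi<0$ (where $[\Psi_{x,y},\Psi_{x',y'}]\equiv\Sigma_k$), and the same explicit realization of $F_4$ and of the $\sigma_\alpha$'s can be reused here.
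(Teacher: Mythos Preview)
Your argument for item~1 is essentially the paper's: both use that distinct roots in $\Delta_{{\rm sin}_i}$ are strongly orthogonal to reduce $[\Sigma_i,\Sigma_i]$ to $\{\mathrm{i}H_{u_i}\pm\mathrm{i}H_{v_i}\pm\mathrm{i}H_{s_i}\pm\mathrm{i}H_{t_i}\}$ and then observe that $\{\pm u_i\pm v_i\pm s_i\pm t_i\}=2(\Delta_{\rm sin}\setminus\Delta_{{\rm sin}_i})$.

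For item~2, your setup---the Latin-square structure of $(a,b)\mapsto\delta(a,b)$ and the resulting four-term signed sums for each coefficient---is correct and matches the paper's framework. But the proof stops precisely where the real content lies. You assert that the coefficient vectors realized are either all-equal-in-magnitude or single-nonzero, and you candidly note that this ``carries the weight of the argument'' and is a finite check once a Chevalley basis is fixed; however, you do not perform that check, and nothing in the setup by itself rules out intermediate patterns such as $(2,2,0,0)$. The paper does not fix a specific basis either. Instead it invokes the relation
\[
\frac{c_{s,t}c_{u,v}}{|s+t|^2}+\frac{c_{t,u}c_{s,v}}{|t+u|^2}+\frac{c_{u,s}c_{t,v}}{|u+s|^2}=0
\]
for roots with $s+t+u+v=0$ (a form of the Jacobi identity, Lemma~\ref{lem:auxiliary}) to derive multiplicative identities among the sixteen Chevalley constants that enter, e.g.\ $c_{u_1,u_2}c_{v_1,v_2}=c_{u_1,-v_2}c_{v_1,-u_2}$. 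These identities collapse everything to six \emph{free} sign variables $p_1,\dots,p_6\in\{\pm1\}$ and two fixed signs $q_1,q_2$, after which the coefficients become explicit expressions such as $\mu_1=q_1(1+p_1p_4+p_2p_5+p_3p_6)$ and $\mu_2=p_1+p_4-q_2(p_2p_6+p_3p_5)$, whose value sets can then actually be enumerated. That reduction is the piece your proof is missing. Your remark that the machinery from the $\xi<0$ case of Proposition~\ref{prop:rootsQ} can be reused is correct in spirit---the same Jacobi-type identity underlies both---but it has to be deployed here, not just cited.
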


We provide proofs of the above results in Appendices C, D, and E, respectively. 
The above results imply that $S_3 \equiv S_2$ and item~3 of Theorem~\ref{thm:allcases} is established.

\subsection{Computation of $[S^*, S^*]$: On root system $G_2$}\label{ssec:caseG2}
Note that for the special case, $\g_0$ is the compact real form of $\g$ (since $\g_0$ is neither complex nor a split real form of $\g$).  
For each $i = 1,2,3$, let $(i,j,k)$ be a cyclic rotation of $(1,2,3)$ and define
$x_i := e_j - e_k$ and $y_i:= e_j + e_k - 2e_i$. 
Note that by item~({\em g}) of Theorem~\ref{prop:orthogonalpairs}, each $(x_i,y_i)$ is an {\em iop}. 
For ease of notation, we let 
$\Phi_i:=\Phi_{y_i}$ and $\Psi_i: =\{Y_{y_i} \pm Y_{x_i}, Z_{y_i} \pm Z_{x_i}\}$ for each~$i = 1,2,3$.   
Then, the set $S^*$ defined in~\eqref{eq:defsstar} can be re-written as 
$
S^* =  \cup^3_{i = 1}(\Phi_i \cup \Psi_i)
$. 

Note that  every element in $S_0$ can be expressed as a linear combination of elements in $S^*$, so $S^*$ spans~$\g_0$. 
We have the following computational result: 

\begin{proposition}\label{prop:computeg2} 
The following hold for $[S^*, S^*]$:
\begin{enumerate}
\item For any given $i\in \{1,2,3\}$, let $\{j,k\} := \{1,2,3\} \backslash \{i\}$. Then,  
$[\Phi_i,\Psi_i] \equiv [\Phi_i, \Phi_i] \equiv \Phi_i$ and $ 
[\Psi_i, \Psi_i]\equiv \{\mathrm{i} H_{y_j},  \mathrm{i}H_{y_k}\}$.  
\item For any given $(i,j)$ with $i\neq j$, let $\{k\} := \{1,2,3\} \backslash \{i,j\}$. Then, 
$[\Phi_i, \Phi_j] \equiv \cup^3_{l = 1} \{Y_{y_l}, Z_{y_l}\}$,  
$[\Phi_i,\Psi_j] \equiv \Psi_j \cup \Psi_k$, and $[\Psi_i,\Psi_j]\equiv \Psi_{k} \cup \{Y_{y_k}, Z_{y_k}\}$. 
\end{enumerate}
\end{proposition}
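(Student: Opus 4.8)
The plan is to establish Proposition~\ref{prop:computeg2} by direct computation using the explicit root system $G_2$ from case~({\em d}) of Theorem~\ref{prop:matchedpairs}, together with the structural constants from Proposition~\ref{lem3} and Equations~\eqref{eq:introHHHH}, \eqref{lem3:cond11}, \eqref{lem3:cond1}. Since $\g_0$ is the compact real form, every root is compact imaginary, so $\theta\alpha = \alpha$, $\sigma_\alpha = 1$, $\theta X_\alpha = X_\alpha$, and the formulas simplify: for instance $Y_\alpha = X_\alpha - X_{-\alpha}$, $Z_\alpha = \mathrm{i}(X_\alpha + X_{-\alpha})$, and $[Y_\alpha, Y_\beta] = c_{\alpha,\beta} Y_{\alpha+\beta} - c_{\alpha,-\beta} Y_{\alpha - \beta}$ when $\beta \notin \{\pm\alpha\}$. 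The first preliminary step is to record, for the specific $x_i = e_j - e_k$ (short) and $y_i = e_j + e_k - 2e_i$ (long), which sums and differences $x_i \pm x_j$, $y_i \pm y_j$, $x_i \pm y_j$, etc., are again roots, and to identify them within $\Delta_{\rm sh} \cup \Delta_{\rm long}$; these incidence facts drive everything. I would also note at the outset that $H_{x_i}$, $H_{y_i}$ for $i=1,2,3$ satisfy a single linear relation (since $\Delta$ spans a $2$-dimensional space), which is what makes the bracket targets in items~1 and~2 consistent.

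For item~1, fix $i$ and $\{j,k\} = \{1,2,3\}\backslash\{i\}$. The brackets $[\Phi_i,\Phi_i]$ and $[\Phi_i,\Psi_i]$ only involve the single root $y_i$ and the iop $(x_i, y_i)$: using $[\Phi_\alpha,\Phi_\beta] \equiv \Phi_\alpha$ for $\beta \in \{\pm\alpha\}$ (from the discussion after Proposition~\ref{lem3}) gives $[\Phi_i,\Phi_i]\equiv\Phi_i$ immediately; for $[\Phi_i,\Psi_i]$ one applies \eqref{lem3:cond11} to see $[\mathrm{i}H_{y_i}, Y_{y_i}\pm Y_{x_i}]$ lands in $\spann\{Y_{y_i}, Z_{y_i}, Y_{x_i}, Z_{x_i}\}$ and then checks — using $\langle y_i, x_i\rangle = 0$ and $\langle y_i, y_i\rangle \ne 0$ — that one recovers exactly $\Phi_i$ up to scalars (the $Y_{y_i}, Z_{y_i}$ parts survive with nonzero coefficient while the $x_i$-parts either cancel or are scalar multiples already in $\Phi_i$; one must be a little careful that nothing extra appears). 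For $[\Psi_i,\Psi_i]$, I would invoke Corollary~\ref{cor:orthogonalpair}: $x_i$ and $y_i$ are strongly orthogonal, so by \eqref{eq:introHHHH} (with $\theta = \mathrm{id}$) $[Y_{y_i}, Y_{x_i}] = 0$ and $[Y_{y_i}, Z_{y_i}] \equiv \mathrm{i}H_{y_i}$, but since $\mathrm{i}H_{y_i}$ lies in the $2$-dimensional $\mathrm{i}\h_0$ and we need the answer phrased as $\{\mathrm{i}H_{y_j}, \mathrm{i}H_{y_k}\}$, I would use the linear relation among $H_{y_1}, H_{y_2}, H_{y_3}$ (and $H_{x_i}$) to re-express. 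The key check here is that the pair $\{\mathrm{i}H_{y_j}, \mathrm{i}H_{y_k}\}$ indeed spans the same line — but wait, $\mathrm{i}\h_0$ is two-dimensional while $\{\mathrm{i}H_{y_j},\mathrm{i}H_{y_k}\}$ has two elements, so I must verify the bracket produces (scalar multiples of) both, which means $[\Psi_i,\Psi_i]$ must yield at least two independent elements of $\mathrm{i}\h_0$: this comes from $[Y_{y_i}+Y_{x_i}, Z_{y_i}-Z_{x_i}]$ versus $[Y_{y_i}+Y_{x_i}, Z_{y_i}+Z_{x_i}]$ giving $\mathrm{i}(H_{y_i} \pm H_{x_i})$ up to scalars, and $H_{y_i}, H_{x_i}$ are independent, then re-expressing in terms of $H_{y_j}, H_{y_k}$.

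For item~2, fix $i \ne j$ with $\{k\} = \{1,2,3\}\backslash\{i,j\}$. Here $[\Phi_i,\Phi_j]$ is computed via \eqref{lem3:cond1}: since neither $y_i$ nor $y_j$ lies in $\{\pm y_j\}$, we get $[Y_{y_i}, Y_{y_j}] = c_{y_i,y_j} Y_{y_i+y_j} - c_{y_i,-y_j} Y_{y_i - y_j}$, and from the root incidences one finds $y_i + y_j$ and $y_i - y_j$ are (up to the $G_2$ scaling) again long roots, in fact $\pm y_k$-type or short-root combinations — I would track exactly which, concluding $[\Phi_i,\Phi_j]$ produces $\{Y_{y_l}, Z_{y_l} : l=1,2,3\}$ up to scalars (the $\mathrm{i}H$-parts vanish because $\langle y_i, y_j\rangle$ has a sign making $[\mathrm{i}H_{y_i},\Phi_{y_j}]$ land in $\{Y_{y_j},Z_{y_j}\}\subset \cup_l\{Y_{y_l},Z_{y_l}\}$ anyway). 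For $[\Phi_i,\Psi_j]$ and $[\Psi_i,\Psi_j]$ the relevant additions of root-pairs and iops are governed by Propositions~\ref{prop:onepaironeroot} and~\ref{prop:rootsQ} in spirit, but since $G_2$ was excluded there I would instead just read off directly: $R_{x_j,y_j} + \{\pm y_i\}$ and $R_{x_i,y_i} + R_{x_j,y_j}$ contain new roots forming $R_{x_k,y_k}$ plus possibly $\{\pm x_k\}$ or $\{\pm y_k\}$, yielding $\Psi_j\cup\Psi_k$ and $\Psi_k\cup\{Y_{y_k},Z_{y_k}\}$ respectively. The main obstacle is the bookkeeping: $G_2$ has short and long roots of different lengths, $2\alpha$ is never a root but the $\alpha$-strings can have length up to $4$, so the Chevalley constants $c_{\alpha,\beta}$ can be $\pm 2$ or $\pm 3$ and one must confirm that in each bracket the two terms $c_{\alpha,\beta}(\cdot)_{\alpha+\beta}$ and $\mp c_{\alpha,-\beta}(\cdot)_{\alpha-\beta}$ do not accidentally cancel (which would shrink the set) — this requires knowing $c^2_{\alpha,\beta} \ne c^2_{\alpha,-\beta}$ in the relevant cases, unlike the non-$G_2$ situation. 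I expect this non-equality of string lengths, and the resulting need to verify term-by-term non-cancellation, to be the real work; the rest is substituting into \eqref{lem3:cond11}–\eqref{lem3:cond1} and the linear algebra in $\h_0$.
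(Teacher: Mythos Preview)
Your treatment of item~1 and of $[\Phi_i,\Phi_j]$, $[\Phi_i,\Psi_j]$ in item~2 is essentially the same as the paper's and is correct. The gap is in $[\Psi_i,\Psi_j]$. Expanding $[Y_{y_1}+\epsilon Y_{x_1},\,Y_{y_2}+\epsilon' Y_{x_2}]$ gives $\mu_1 Y_{y_3}+\mu_3 Y_{x_3}$ with
\[
\mu_1=-\bigl(1+\epsilon\epsilon'\,c_{x_1,-x_2}\bigr),\qquad
\mu_3=\epsilon\,c_{x_1,y_2}+\epsilon'\,c_{y_1,-x_2}-2\epsilon\epsilon',
\]
where $c_{x_1,-x_2}=\pm 3$ and $c_{x_1,y_2},c_{y_1,-x_2}=\pm 1$. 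For this to land in $\Psi_3\cup\{Y_{y_3}\}$ one needs, for every choice of $\epsilon,\epsilon'$, either $\mu_3=0$ or $|\mu_1|=|\mu_3|$. A short case check shows this holds if and only if $c_{x_1,-x_2}=3\,c_{x_1,y_2}\,c_{y_1,-x_2}$; if that sign relation fails one gets, for instance, $-4Y_{y_3}-2Y_{x_3}$, which lies in no scalar class of $\Psi_3\cup\{Y_{y_3},Z_{y_3}\}$.

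So the issue is not ``$c^2_{\alpha,\beta}\neq c^2_{\alpha,-\beta}$'' or non-cancellation within a single bracket, but a \emph{sign compatibility among three different Chevalley constants}. The paper supplies exactly this via the Jacobi-type identity of Lemma~\ref{lem:auxiliary}: applying it to $x_1,-x_2,y_1,y_2$ (which sum to zero) and using $|x_1-x_2|^2=3|y_1-x_2|^2$ yields $c_{x_1,-x_2}=3\,c_{x_1,y_2}\,c_{y_1,-x_2}$. Your outline never invokes such a relation, and without it the computation of $[\Psi_i,\Psi_j]$ cannot close.
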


We provide a proof in Appendix~F. 
Item~4 of Theorem~\ref{thm:allcases} then follows from the above result. The proof of Theorem~\ref{thm:allcases} is now complete.

\bibliographystyle{amsplain}%
\bibliography{alias,bib}

\providecommand{\bysame}{\leavevmode\hbox to3em{\hrulefill}\thinspace}
\providecommand{\MR}{\relax\ifhmode\unskip\space\fi MR }
\providecommand{\MRhref}[2]{%
  \href{http://www.ams.org/mathscinet-getitem?mr=#1}{#2}
}
\providecommand{\href}[2]{#2}
\begin{thebibliography}{1}

\bibitem{chen2019structure}
X.~Chen, \emph{Structure theory for ensemble controllability, observability,
  and duality}, Mathematics of Control, Signals, and Systems \textbf{31}
  (2019), no.~2, 1--40.

\bibitem{SH:79}
S.~Helgason, \emph{{D}ifferential {G}eometry, {L}ie {G}roups, and {S}ymmetric
  {S}paces}, vol.~80, Academic press, 1979.

\bibitem{humphreys2012introduction}
J.~E. Humphreys, \emph{{I}ntroduction to {L}ie algebras and {R}epresentation
  {T}heory}, vol.~9, Springer Science \& Business Media, 2012.

\bibitem{AWK:02}
A.~W. Knapp, \emph{{L}ie {G}roups {B}eyond an {I}ntroduction}, 2 ed., vol. 140,
  Birkh{\"a}user, Boston, MA, 2002.

\bibitem{li2009ensemble}
J.-S. Li and N.~Khaneja, \emph{Ensemble control of {B}loch equations}, IEEE
  Transactions on Automatic Control \textbf{54} (2009), no.~3, 528--536.

\end{thebibliography}

\appendix


\section{Proof of Prop.~\ref{prop:onepaironeroot}}
It should be clear that $P$ contains zero if and only if $\gamma\in R_{x, y}$.  Because the underlying root system is not $G_2$, by Corollary~\ref{cor:orthogonalpair} and Theorem~\ref{prop:orthogonalpairs}, $x$ and $y$ are single, imaginary roots and are strongly orthogonal to each other. Thus, $[\Psi_{x, y}, \Phi_\gamma] \equiv [\Phi_\gamma, \Phi_\gamma] \equiv \Phi_\gamma$.  
We now assume that $P$ contains at least a root and establish the three items subsequently:
\vspace{.1cm}

{\em Proof of item~1.}  
We assume, without loss of generality, that $\gamma = \alpha$. Then, $\{-\beta, \theta\beta\}$ are the roots contained in $P$. By Prop.~\ref{prop:s0s0}, $\langle\alpha, \beta + \theta \beta\rangle = 0$. Thus, $\langle \alpha, \alpha + \beta \rangle = \langle \alpha, \alpha - \theta\beta \rangle > 0$. It follows from~\eqref{lem3:cond11} that  
$[\Psi_{x, y}, \mathrm{i}(H_\alpha + H_{\theta \alpha})]  \equiv \Psi_{x, y}$. 
Next, note that $c_{\alpha + \beta, \alpha} = c_{\alpha - \theta\beta, \alpha} = 0$ and $c^2_{\alpha + \beta, -\theta\alpha} = c^2_{\alpha - \theta\beta, -\theta\alpha} = 1$.  
It then follows from~\eqref{lem3:cond1} that $[\Psi_{x, y}, \{Y_\alpha, Z_\alpha\}] \equiv \{Y_{-\beta}, Z_{-\beta}\}$.  
\vspace{.1cm}

{\em Proof of item~2.} We assume, without loss of generality, that $\delta :=\alpha + \beta + \gamma$ is a root in~$P$. Then,  $-\theta\delta = -(\alpha + \beta) - \theta \gamma$ is a root as well. We need to show that $\delta$ and $-\theta\delta$ are the only two roots in $P$. First, note that by Theorem~\ref{prop:matchedpairs}, $(\alpha + \beta)\in \Delta_{\rm sin}$, so $(\alpha + \beta, \gamma)$ and $(\alpha + \beta, -\gamma)$ are not matched pairs. Since $\delta$ and $-\theta\delta$ are roots, the two elements $-(\alpha + \beta) + \gamma$ and $(\alpha + \beta) - \theta \gamma$ of $P$ cannot be roots. 
For the other four elements $\{\pm(\alpha - \theta\beta) + \gamma, \pm(\alpha - \theta\beta) -\theta\gamma\}$ of $P$, it suffices to show that $\alpha - \theta\beta$ is strongly orthogonal to $\gamma$ (note that since $\delta = \alpha + \beta + \gamma$ is a root under the assumption, $\gamma$ cannot be orthogonal to $\alpha + \beta$, i.e., $u= \alpha -\theta\beta$ and $v= \alpha + \beta$ in the case). In fact, 
since $\alpha - \theta \beta$ is a single (and imaginary) root, we only need to show that $\alpha - \theta\beta$ is orthogonal to $\gamma$. Orthogonality of the two roots implies strong orthogonality.  

To establish orthogonality of the two roots, we consider two cases: If $\Delta$ contains only imaginary roots, then $\gamma\in \Delta_{\rm sh}$ and $\alpha + \beta\in\Delta_{\rm long}$. Thus, 
$
\nicefrac{\langle \alpha + \beta, \gamma\rangle}{|\gamma|^2} = \nicefrac{2\langle \alpha + \beta, \gamma\rangle}{|\alpha + \beta|^2} = -1$,  
which holds if and only if $\nicefrac{\langle \alpha, \gamma\rangle}{|\gamma|^2} = \nicefrac{\langle  \beta, \gamma\rangle}{|\gamma|^2} = -\nicefrac{1}{2}$. It then follows that $\langle\alpha - \beta,\gamma \rangle = 0$. 

We now assume that $\Delta$ contains complex roots. In this case, $\gamma\in \Delta_{\rm comp}$ and $\alpha + \beta\in \Delta_{\rm im}$. We normalize the common length of a root in $\Delta$ to be $\sqrt{2}$. 
Since $\alpha + \beta + \gamma$ is a (complex) root,  
$\langle \alpha + \beta, \gamma \rangle = -1$. We assume, without loss of generality, that $\langle \alpha, \gamma\rangle = -1$ and, hence, $\alpha + \gamma$ is a root. Now, suppose for the moment that $\langle \theta \beta, \gamma\rangle \ge 0$; then, $\langle \alpha - \theta\beta, \gamma\rangle< 0$ and, hence,  $(\alpha + \gamma) - \theta \beta$ is a root. It then follows that $(\alpha + \gamma, \beta)$ is a matched pair, which contradicts the fact that $\alpha + \beta + \gamma$ is a complex root. Thus, we must have that $\langle \theta\beta, \gamma\rangle < 0$. 
Furthermore, since $\gamma\not\in T_{\alpha,\beta}$, we obtain that $\langle \theta\beta, \gamma\rangle = -1$. It then follows that $\langle\gamma, \alpha - \theta\beta\rangle = -1 - (-1) = 0$. 

To compute $[\Psi_{x, y},\Phi_\gamma]$, we first use~\eqref{lem3:cond11} and the fact that $\langle \gamma, \alpha -\theta\beta \rangle = 0$ to obtain that $[\Psi_{x, y}, \mathrm{i} H_\gamma] \equiv \{Y_{\alpha + \beta}, Z_{\alpha + \beta}\}$. We next note that $c^2_{\alpha + \beta, \gamma} = 1$ and $c_{\alpha + \beta, -\theta\gamma} = c_{\alpha - \theta\beta, \gamma}  = c_{\alpha - \theta\beta, -\theta\gamma} = 0$. Thus, by~\eqref{lem3:cond1}, we obtain that $[\Psi_{x, y}, \{Y_\gamma, Z_\gamma\}] \equiv \{Y_{\delta}, Z_{\delta}\}$. 
\vspace{.1cm}

{\em Proof of item~3.} We assume again that $\alpha + \beta + \gamma$ is a root in~$P$. By Theorem~\ref{prop:matchedpairs}, $|\gamma|^2 = |\alpha + \beta|^2\ge |\alpha|^2 = |\beta|^2$. Thus,  $\nicefrac{\langle \alpha + \beta, \gamma \rangle}{|\gamma|^2} = -\nicefrac{1}{2}$, so either $\langle \alpha, \gamma\rangle$ or $\langle\beta, \gamma\rangle$ is zero. Without loss of generality, we assume that $\langle \alpha, \gamma\rangle = 0$ and $\nicefrac{\langle\beta, \gamma\rangle}{|\gamma|^2} = -\nicefrac{1}{2}$. Next, note that $\gamma$ is imaginary and, hence, $\langle \theta \alpha, \gamma \rangle = \langle \alpha, \gamma \rangle = 0$. Then, $\langle\beta -\theta\alpha, \gamma\rangle = \langle\beta, \gamma\rangle < 0$,  so $\beta  + \gamma - \theta\alpha$ is a root. It follows that $(\alpha, \beta + \gamma)$ is a matched pair. Let $(x',y')$ be the corresponding {\em iop}. Then, the four roots of $R_{x', y'}$ are contained in $P$ (note that $\theta\gamma = \gamma$).  The other four elements in $P$ cannot be roots. 
To see this, note that $\gamma\in \Delta_{\rm \sin}$ and, hence, $(\alpha + \beta, \gamma)$ and $(\alpha - \theta\beta, \gamma)$ cannot be matched pairs. Since $\alpha + \beta +\gamma$ and $\alpha - \theta\beta  - \gamma$ are roots, $\alpha + \beta - \gamma$ and $\alpha - \theta \beta + \gamma$ (as well as their negatives) are not roots. 

To compute $[\Psi_{x, y}, \Phi_\gamma]$, we first note that 
$\langle \gamma, \alpha + \beta\rangle = -\langle \gamma, \alpha -\theta \beta\rangle = \langle\gamma, \beta\rangle < 0$.  
It then follows from~\eqref{lem3:cond11} that $[\Psi_{x, y}, \mathrm{i} H_\gamma] \equiv \Psi_{x, y}$.  
We next note that $c^2_{\alpha + \beta, \gamma} = c^2_{\alpha - \theta\beta, -\gamma} = 1$ and $c_{\alpha + \beta, -\gamma} = c_{\alpha - \theta\beta, \gamma} = 0$.  
Thus, by~\eqref{lem3:cond1}, $[\Psi_{x, y}, \{Y_\gamma, Z_\gamma\}] \equiv \Psi_{x',y'}$.   
\hfill{\qed}

\section{Proof of Prop.~\ref{prop:rootsQ}}
To establish Prop.~\ref{prop:rootsQ}, we need a few preliminary results. We first have the following fact adapted from~\cite[Prop.~6.104]{AWK:02}: 

\begin{lemma}\label{lem:sigmaxsigmay}
If $(x, y)$ is an {\em iop}, then $\sigma_{x} = \sigma_{y}$. 	
\end{lemma}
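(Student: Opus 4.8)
The plan is to split the argument according to whether $\Delta$ contains a complex root, since by Theorem~\ref{prop:matchedpairs} a matched pair $(\alpha,\beta)$ consists either of two imaginary roots or of two complex roots, and the identity $c_{\theta\alpha,\theta\gamma}=\sigma_\alpha\sigma_\gamma\sigma_{\alpha+\gamma}c_{\alpha,\gamma}$ (valid whenever $\alpha,\gamma,\alpha+\gamma\in\Delta$, recorded in Section~1) behaves quite differently in the two situations. Throughout, recall that $(x,y)$ being an iop means $x=\alpha+\beta$, $y=\alpha-\theta\beta$ for a matched pair $(\alpha,\beta)$, and — the root system not being $G_2$ — that $x$ and $y$ are imaginary; in particular $\theta x=x$ and $\theta y=y$, which forces $\theta\alpha+\theta\beta=x$ and $\theta\alpha-\beta=\theta y=y$ as vectors.

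If $\Delta$ has only imaginary roots, $\theta$ fixes every root, so the identity reduces to $c_{\alpha,\gamma}=\sigma_\alpha\sigma_\gamma\sigma_{\alpha+\gamma}c_{\alpha,\gamma}$; since $c_{\alpha,\gamma}\neq 0$ (its square is $(q+1)^2\ge 1$ by Lemma~\ref{theorem:rootspacedecomp}), this gives $\sigma_{\alpha+\gamma}=\sigma_\alpha\sigma_\gamma$ whenever $\alpha,\gamma,\alpha+\gamma$ are roots. Taking $\gamma=\beta$ yields $\sigma_x=\sigma_\alpha\sigma_\beta$, and taking $\gamma=-\beta$ — legitimate since $\alpha-\beta=y$ is a root — yields $\sigma_y=\sigma_\alpha\sigma_{-\beta}=\sigma_\alpha\sigma_\beta$, so $\sigma_x=\sigma_y$.

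Now suppose $\Delta$ has a complex root. Then by Lemma~\ref{lem:introH} all roots have a common length, which I normalize to $\sqrt2$, and by Theorem~\ref{prop:matchedpairs} the roots $\alpha,\beta$ are complex, hence $\sigma_\alpha=\sigma_\beta=1$ by~\eqref{eq:cond2}. Applying the identity with $\gamma=\beta$ (recall $\alpha+\beta=x$) gives $\sigma_x=c_{\theta\alpha,\theta\beta}/c_{\alpha,\beta}$, and with $\gamma=-\theta\beta$ (recall $\theta(-\theta\beta)=-\beta$, $\sigma_{-\theta\beta}=1$, $\alpha-\theta\beta=y$) gives $\sigma_y=c_{\theta\alpha,-\beta}/c_{\alpha,-\theta\beta}$. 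Moreover $c_{\alpha,\beta}^2=c_{\alpha,-\theta\beta}^2=1$, as observed in the proof of Proposition~\ref{prop:s0s0} (since $\beta-\alpha$ and $-\theta\beta-\alpha$ are too long to be roots, the relevant $\alpha$-strings have $q=0$).

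The crux is then a single application of the standard four-root Jacobi relation: for roots $p,q,r,s$ with $p+q+r+s=0$ and no two proportional, $\frac{c_{p,q}c_{r,s}}{|p+q|^2}+\frac{c_{q,r}c_{p,s}}{|q+r|^2}+\frac{c_{r,p}c_{q,s}}{|r+p|^2}=0$. I would apply this with $(p,q,r,s)=(\theta\alpha,\theta\beta,-\alpha,-\beta)$: the sum is $0$ because $\theta\alpha+\theta\beta=\alpha+\beta$; no two of them are proportional (using that $\alpha,\theta\alpha$ and $\beta,\theta\beta$ are strongly orthogonal by Proposition~\ref{lem:introH1}, together with $x,y\in\Delta$, to exclude coincidences such as $\theta\alpha=\pm\beta$); and $r+p=\theta\alpha-\alpha$ is not a root (again Proposition~\ref{lem:introH1}), so the third term vanishes. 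Since $p+q=x$ and $q+r=\theta\beta-\alpha=-y$ have equal squared length, and since $c_{-\alpha,-\beta}=-c_{\alpha,\beta}$ while antisymmetry and the negation rule give $c_{\theta\beta,-\alpha}=c_{\alpha,-\theta\beta}$, the relation collapses to $c_{\theta\alpha,\theta\beta}\,c_{\alpha,\beta}=c_{\theta\alpha,-\beta}\,c_{\alpha,-\theta\beta}$; then, since each factor squares to $1$, $\sigma_x=c_{\theta\alpha,\theta\beta}/c_{\alpha,\beta}=c_{\theta\alpha,\theta\beta}\,c_{\alpha,\beta}=c_{\theta\alpha,-\beta}\,c_{\alpha,-\theta\beta}=c_{\theta\alpha,-\beta}/c_{\alpha,-\theta\beta}=\sigma_y$. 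The delicate part is exactly this Jacobi step — choosing the quadruple so that precisely one term drops, verifying the non-proportionality hypotheses, and tracking the antisymmetry/negation signs of the structure constants so the product identity comes out with the correct sign; the reduction $c_{\alpha,\beta}^2=c_{\alpha,-\theta\beta}^2=1$ (where the hypothesis ``not $G_2$'' enters, via Proposition~\ref{prop:s0s0}) is what lets ``$c=1/c$'' match the two one-line formulas for $\sigma_x$ and $\sigma_y$.
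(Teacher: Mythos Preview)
Your argument is correct. The paper does not actually prove this lemma: it simply records it as ``adapted from \cite[Prop.~6.104]{AWK:02}'' and moves on. You have supplied a self-contained proof using only ingredients already present in the paper (the identity $c_{\theta\alpha,\theta\beta}=\sigma_\alpha\sigma_\beta\sigma_{\alpha+\beta}c_{\alpha,\beta}$ from Section~1 and the four-root relation of Lemma~\ref{lem:auxiliary}).

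A brief comparison: your imaginary-root case is essentially a rederivation of what Knapp's cited proposition says --- that $\sigma$ is multiplicative on sums of imaginary roots --- so there the two approaches coincide in spirit. Your complex-root case, however, is genuinely new relative to the paper: rather than invoking the external reference, you reduce $\sigma_x=\sigma_y$ to a product identity among structure constants and extract it from a single well-chosen instance of Lemma~\ref{lem:auxiliary} with the quadruple $(\theta\alpha,\theta\beta,-\alpha,-\beta)$, using Proposition~\ref{lem:introH1} to kill the third term and Proposition~\ref{prop:s0s0} to normalize $c_{\alpha,\beta}^2=c_{\alpha,-\theta\beta}^2=1$. The sign bookkeeping ($c_{-\alpha,-\beta}=-c_{\alpha,\beta}$ and $c_{\theta\beta,-\alpha}=c_{\alpha,-\theta\beta}$) and the verification that no two of the four roots sum to zero are handled correctly. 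This buys the reader independence from the outside citation at the cost of one short computation.
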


We next introduce the following fact which is adapted from Lemmas 5.1 and 5.3 in~\cite[Ch. III]{SH:79}:

\begin{lemma}\label{lem:auxiliary}
Let $s, t, u, v$ be four arbitrary roots, no two of which have sum~$0$. If $s + t + u + v = 0$, then the following hold: 
$$
\frac{c_{s,t}c_{x, y}}{|s + t|^2} + \frac{c_{t,u}c_{s,v}}{|t + u|^2} + \frac{c_{u,s}c_{t,v}}{|u + s|^2} = 0.
$$  	
\end{lemma}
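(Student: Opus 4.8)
\textbf{Proof plan for Lemma~\ref{lem:auxiliary}.}

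The plan is to reduce the identity to the Jacobi identity for the Chevalley basis. Since $s+t+u+v=0$ with no two of the four roots summing to zero, I would first record the structural consequences: the pairwise sums $s+t$, $t+u$, $u+s$ are the only candidates to be roots (their negatives being $u+v$, $s+v$, $t+v$ respectively), and $c_{s,t}=c_{-(s+t),?}$ type antisymmetry relations from Lemma~\ref{theorem:rootspacedecomp}. The key computation is to expand the Jacobi identity
\begin{equation*}
[[X_s,X_t],X_u] + [[X_t,X_u],X_s] + [[X_u,X_s],X_t] = 0
\end{equation*}
using item~2 of Lemma~\ref{theorem:rootspacedecomp}. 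Each double bracket produces a scalar multiple of $X_{s+t+u}=X_{-v}$ (when the intermediate sum is a root), namely $[[X_s,X_t],X_u]=c_{s,t}c_{s+t,u}X_{-v}$, and similarly for the cyclic terms. Collecting coefficients of $X_{-v}$ gives
\begin{equation*}
c_{s,t}c_{s+t,u} + c_{t,u}c_{t+u,s} + c_{u,s}c_{u+s,t} = 0 .
\end{equation*}
The remaining task is to rewrite each term so that the second factor becomes $c_{x,y}$, $c_{s,v}$, $c_{t,v}$ respectively and the correct $|s+t|^{-2}$ weights appear. Here $(x,y)$ must be identified: in the intended application $s+t=x$ and the companion root is $y$, so I would treat $c_{x,y}$ as shorthand for $c_{s+t,\,-(s+t)-v}=c_{s+t,u}$ after an antisymmetry step, and check this matches the statement as it is used in the proof of Prop.~\ref{prop:rootsQ}.

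The weight factors $|s+t|^{-2}$, $|t+u|^{-2}$, $|u+s|^{-2}$ come from the normalization $B(X_\alpha,X_{-\alpha})=\nicefrac{2}{|\alpha|^2}$ in item~1 of Lemma~\ref{theorem:rootspacedecomp}. The cleanest route is to pair the Jacobi identity against $X_v$ under the Killing form rather than reading off the $X_{-v}$-coefficient directly: since $B([X_s,X_t],X_u)$ is, by invariance, $B(X_s,[X_t,X_u])$, one gets $c_{s,t}c_{s+t,u}B(X_{s+t+u},X_v)=c_{s,t}c_{s+t,u}\cdot\nicefrac{2}{|v|^2}$, but the more symmetric bookkeeping is to use $[X_{s+t},X_u]$ expressed via $[X_u,X_{s+t}]=-c_{u,s+t}X_{s+t+u}$ and the relation $c_{s,t}=\varepsilon\,\nicefrac{|s+t|}{|s||t|}\cdot(\text{integer})$—but that overcomplicates things. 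Instead I would multiply the $X_{-v}$-coefficient identity through by a common factor and use the antisymmetry $c_{\alpha,\beta}=-c_{-\alpha,-\beta}$ together with the well-known relation $c_{\alpha,\beta}/|{\alpha+\beta}|^2 = -c_{-\beta,-\alpha-\beta}/|\alpha|^2$ (equivalently $(q+1)/|{\alpha+\beta}|^{2}$ symmetry of $\alpha$-strings) to convert $c_{s+t,u}$ into $c_{s,v}$-type factors with the stated denominators. Each such conversion is a two-line string-length calculation.

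\textbf{Main obstacle.} The real work is not the Jacobi identity itself but the careful normalization chase: verifying that the three terms pick up exactly $|s+t|^{-2}$, $|t+u|^{-2}$, $|u+s|^{-2}$ and not, say, $|s+t|^{-2}$ twice, and that all signs from the $c_{\alpha,\beta}=-c_{-\alpha,-\beta}$ antisymmetry line up so the identity reads with all plus signs as stated. I also need to handle gracefully the degenerate cases where one or more of $s+t$, $t+u$, $u+s$ fails to be a root: then the corresponding $X_{s+t}$ vanishes, the matching $c$ is zero by convention, and the term drops from both the Jacobi identity and the claimed identity, so the equality still holds—this should be stated explicitly rather than swept aside. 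Since the excerpt attributes the lemma to \cite[Ch.~III, Lemmas 5.1, 5.3]{SH:79}, I would present the above as the derivation and note that it is standard, giving the normalization bookkeeping in just enough detail to be self-contained.
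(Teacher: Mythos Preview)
The paper does not give its own proof of Lemma~\ref{lem:auxiliary}; it merely attributes the result to \cite[Ch.~III, Lemmas~5.1 and~5.3]{SH:79}. Your plan via the Jacobi identity
\[
c_{s,t}c_{s+t,u} + c_{t,u}c_{t+u,s} + c_{u,s}c_{u+s,t} = 0
\]
followed by the three-root relation (stated separately in the paper as Lemma~\ref{lem:threerootsumzero}) to convert each $c_{s+t,u}$ into $\tfrac{|v|^2}{|s+t|^2}\,c_{u,v}$, etc., is precisely the standard argument from that reference and is correct; the degenerate cases you flag are handled exactly as you say. Your suspicion about $c_{x,y}$ is also right: it is a typo for $c_{u,v}$, and with that reading the normalization chase you call the ``main obstacle'' reduces to a one-line application of Lemma~\ref{lem:threerootsumzero} to each of the three triples $(s+t,u,v)$, $(t+u,s,v)$, $(u+s,t,v)$, rather than any delicate string-length bookkeeping.
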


With the Lemmas above, we will now establish Prop.~\ref{prop:rootsQ}. Since the underlying root system is not $G_2$, it follows from Theorem~\ref{prop:matchedpairs} that all roots $x$, $y$, $x'$, and $y'$ share the same length. We normalize the length to be $\sqrt{2}$. Since $R_{x, y}$ does not intersect $R_{x',y'}$ (otherwise, $Q$ contains zero but not any root), each inner-product in the expression~\eqref{eq:defxi} of $\xi$ can only be~$1$ or~$-1$.  
\vspace{.1cm}

{\em Proof of item~1.} 
We assume, without loss of generality, that 
all inner-products in~\eqref{eq:defxi} all $-1$. It follows from computation that 
\begin{equation}\label{eq:lin}
\langle x + x', y + y' \rangle = \langle x + y' , y + x' \rangle = -2 \quad \langle x + x', x + y' \rangle = 0.
\end{equation} 
By Theorem~\ref{prop:matchedpairs}, the length of a single root is greater than or equal to the length of a nonsingle root. Thus, for~\eqref{eq:lin} to hold, we must have that  
$x + x' = - (y + y')$ and $(x + y') = - (y + x')$ are single roots and $\pm \{x + x', x + y'\}$ are the four roots in~$Q$. Furthermore, by Prop.~\ref{prop:f4e6q}, there is an {\em iop} $(x'', y'')$ such that $x'' = x + x'$ and $y'' = x + y'$.  

We now compute $[\Psi_{x, y}, \Psi_{x',y'}]$. Let $\epsilon$ and $\epsilon'$ be free variables that take values from the set $\{1,-1\}$. Then, by~\eqref{lem3:cond1}, 
\begin{equation}\label{eq:comptwopairs}
\left\{
\begin{array}{lllll}
{[Y_{x} +  \epsilon Y_{y}, Y_{x'} + \epsilon'Y_{y'}]} & = & -{[Z_{x} +  \epsilon Z_{y}, Z_{x'} + \epsilon'Z_{y'}]} & = &
\mu_1 Y_{x + x'} + \mu_2 Y_{x + y'}, \\
 
{[Y_{x} +  \epsilon Y_{y}, Z_{x'} + \epsilon'Z_{y'}]} & = & {[Z_{x} +  \epsilon Z_{y}, Y_{x'} + \epsilon'Y_{y'}]} & = & 
\nu_1 Z_{x + x'} 
+ \nu_2 Z_{x + y'}, 
\end{array}
\right.
\end{equation}
where the coefficients $\mu_1$, $\mu_2$, $\nu_1$, and $\nu_2$ are given by
\begin{equation}\label{eq:munu12}
\left\{
\begin{array}{lllllll}
\mu_1 & := & c_{x, x'} -\sigma_{x''} \epsilon\epsilon' c_{y, y'}, & & 
\mu_2 & := & \epsilon' c_{x, y'} - \sigma_{y''} \epsilon c_{y, x'},\\
\nu_1 & := &  c_{x, x'} +\sigma_{x''} \epsilon\epsilon' c_{y, y'}, & &
\nu_2 & := &\epsilon' c_{x, y'} + \sigma_{y''} \epsilon c_{y, x'}.
\end{array}
\right. 
\end{equation}

We show below that for any $\epsilon, \epsilon' \in\{1,-1\}$,  
$\mu^2_1 = \mu^2_2$ and $\nu^2_1 = \nu^2_2$. Note that if the two equalities hold, then by~\eqref{eq:comptwopairs} and~\eqref{eq:munu12}, $[\Psi_{x, y}, \Psi_{x',y'}] \equiv \Psi_{x'',y''}$.  
To establish the equalities, we first note that $c^2_{*,*} = 1$ for any $c_{*,*}$ in~\eqref{eq:munu12}.    
Next, note that by Lemma~\ref{lem:sigmaxsigmay}, $\sigma_{x''} = \sigma_{y''}$.  
It now suffices to show that
$
c_{x, x'} c_{y, y'} =  c_{x, y'}c_{y, x'}. 
$
For that, note that the four roots $x$, $x'$, $y$, and $y'$ satisfy the assumption of Lemma~\ref{lem:auxiliary}, i.e., $x + x' + y + y' = 0$ and no two of which have sum~$0$. Thus, 
$$
\frac{c_{x, x'}c_{y, y'}}{|x + x'|^2} + \frac{c_{x',y}c_{x,y'}}{|x' + y|^2} + \frac{c_{y,x}c_{x',y'}}{|x + y|^2} = 0.
$$ 
Since $x + y$ and $x' + y'$ are not roots, $c_{y,x} = c_{x',y'} = 0$. Further, note that $x''$ and $y''$ are single roots and, hence,  $|x''|^2 = |x + x'|^2 = |x' + y|^2 = |y''|^2$. We thus conclude that   
$c_{x, y}c_{y, y'} = -c_{x', y}c_{x, y'} = c_{y, x'}c_{x, y'}$.  
\vspace{.1cm}

{\em Proof of item~2.} 
	We assume, without loss of generality, that the first three inner-products in~\eqref{eq:defxi} are~$-1$ and the last one is~$1$. Then, 
	$$u: =x + x', \quad v:= y + y', \quad s := x + y', \quad t := y - x'$$
	are roots and, by computation, any two out of the four roots are orthogonal. By item~2 of Prop.~\ref{prop:f4e6q}, the underlying root system is $F_4$ or $E_6$. Moreover, we must have that $R_{x, y}\subset \Delta_{{\rm sin}_i}$ and $R_{x', y'}\subset \Delta_{{\rm sin}_j}$ with $i\neq j$ and $\Delta_{{\rm sin}_k} = \pm\{u, v, s, t\}$. Let $\epsilon, \epsilon'$ be variables that take values in $\{1,-1\}$. Then, by~\eqref{lem3:cond1}, 
	$$
	\left\{
	\begin{array}{lll}
		{[Y_x + \epsilon Y_y, Y_{x'} + \epsilon' Y_{y'}]} & = & 
		c_{x,x'} Y_{u} + \epsilon \epsilon' c_{y,y'} Y_{v} + \epsilon' c_{x,y'} Y_{s} - \epsilon \sigma_{x'} c_{y, -x'} Y_{t}, \vspace{3pt}\\
		{[Z_x + \epsilon Z_y, Z_{x'} + \epsilon' Z_{y'}]} & = & 
		-c_{x,x'} Y_{u} - \epsilon \epsilon' c_{y,y'} Y_{v} - \epsilon' c_{x,y'} Y_{s} - \epsilon \sigma_{x'} c_{y, -x'} Y_{t}, \vspace{3pt}\\
		{[Y_x + \epsilon Y_y, Z_{x'} + \epsilon' Z_{y'}]} & = & 
		c_{x,x'} Z_{u} + \epsilon \epsilon' c_{y,y'} Z_{v} + \epsilon' c_{x,y'} Y_{s} + \epsilon \sigma_{x'} c_{y, -x'} Z_{t}, \vspace{3pt}\\
		{[Z_x + \epsilon Z_y, Y_{x'} + \epsilon' Y_{y'}]} & = & 
		c_{x,x'} Z_{u} + \epsilon \epsilon' c_{y,y'} Z_{v} + \epsilon' c_{x,y'} Z_{s} - \epsilon \sigma_{x'} c_{y, -x'} Z_{t}. 
	\end{array}
	\right.
	$$
	Note that $c^2_{*,*} = 1$ for any $c_{*,*}$ in the above expression. By taking different values of $\epsilon, \epsilon'\in \{1,-1\}$, we obtain that $[\Psi_{x, y}, \Psi_{x',y'}] \equiv \Sigma_k$.
\hfill{\qed}

\section{Proof of Prop.~\ref{prop:computesigmaphi}}
We establish the three items subsequently.
\vspace{.1cm}

{\em Proof of item~1.} 
Note that all roots in $\Delta_{{\rm sin}_i}$ are either proportional or orthogonal. Thus, if $\alpha \in \Delta_{{\rm sin}_i}$, then $[\Sigma_i, \Phi_\alpha] \equiv [\Phi_\alpha,\Phi_\alpha] \equiv \Phi_\alpha$. 
\vspace{.1cm}

{\em Proof of item~2.} 
We now assume that $\alpha \in \Delta_{{\rm sin}_j}$ with $j\neq i$. First, note that $\langle \alpha, x\rangle^2 = \langle \alpha, y\rangle^2$ for any $x, y\in \Delta_{{\rm sin}_i}$. Thus, $[\Sigma_i, \mathrm{i}H_\alpha]\equiv \Sigma_i$. We next note that the roots contained in the set $\Delta_{{\rm sin}_i} \pm \{\alpha\}$ are given by $\Delta_{{\rm sin}_k}$. It follows from computation that $[\Sigma_i, \{Y_\alpha, Z_\alpha\}] \equiv \Sigma_k$. 
\vspace{.1cm}

{\em Proof of item~3.} 
For convenience, we let $\Delta_{\rm ns}:=\Delta\backslash\Delta_{\rm sin}$ be the set of nonsingle roots. Recall that for either case $F_4$ or $E_6$, we have decomposed  $\Delta_{\rm ns} = \cup^3_{l = 1} \Delta_{{\rm ns}_l}$  (see~\eqref{eq:F4shortroots} and~\eqref{eq:E6comproots}) in a way such that if $(\alpha, \beta)$ is a matched pair, then $\alpha, \beta$ belong to the same $\Delta_{{\rm ns}_l}$. Examination of each $\Delta_{{\rm ns}_l}$ (for both cases) leads to the following fact: For each $\alpha\in \Delta_{{\rm ns}_l}$, there exist three different subsets $\{\beta_{i}, -\theta\beta_{i}\}$ for $i = 1,2,3$ of $\Delta_{{\rm ns}_l}$ such that $(\alpha, \beta_{i})$ is a matched pair. Moreover, if we let $(x_i,y_i)$ be the {\em iop} corresponding  to $(\alpha, \beta_i)$, then $R_{x_i, y_i} \subset \Delta_{{\rm sin}_{i}}$. Thus, for a given $i\in \{1,2,3\}$ and a given root $\alpha\in \Delta_{{\rm ns}_l}$, there is a unique root $\beta \in \Delta_{{\rm ns}_l}$   
(unique up to $-\theta$) such that $(\alpha, \beta)$ is a matched pair and $\Delta_{{\rm sin}_i}$ contains $R_{x,y}$ as roots where $(x,y)$ is the {\em iop} corresponding to $(\alpha, \beta)$. It follows that $\{-\beta, \theta\beta\}$ are the roots contained in the set $\Delta_{{\rm sin}_i} + \{\alpha, -\theta \alpha\}$. In particular, $\alpha$ is strongly orthogonal to any root out of $\Delta_{{\rm sin}_i} \backslash R_{x,y}$. Then, by item~1 of Prop.~\ref{prop:onepaironeroot},  
 $[\Sigma_i, \Phi_\alpha] \equiv [\Psi_{x, y}, \Phi_\alpha] \equiv \Psi_{x,y}\cup \{Y_{-\beta}, Z_{-\beta}\}$. 
\hfill{\qed}

\section{Proof of Prop.~\ref{prop:computesigmapsi}}
We establish below the two items.
\vspace{.1cm}

{\em Proof of item~1.}
Since $R_{x, y} \subset \Delta_{{\rm sin}_i}$, roots in $R_{x, y}$ are strongly orthogonal to roots in $\Delta_{{\rm sin}_i} \backslash R_{x, y}$. Thus, $[\Sigma_i, \Psi_{x, y}] \equiv [\Psi_{x, y}, \Psi_{x, y}]$ and, by item 1 of Prop.~\ref{prop:compzeroQ}, $[\Psi_{x, y}, \Psi_{x, y}] \equiv \{\mathrm{i}(H_\alpha + H_{\theta\alpha}), \mathrm{i}(H_\beta + H_{\theta\beta})\}$.
\vspace{.1cm}

{\em Proof of item~2.}
Because the long roots in $F_4$ and the imaginary roots in $E_6$ are related by a linear isometry (Prop.~\ref{prop:isometry}) and because the analysis is on the level of single roots, it suffices to consider the case where the underlying root system is $F_4$. Furthermore, by symmetry, we can assume, without loss of generality, that $i = 1$ and $R_{x, y} = \{\pm e_2 \pm e_3\}\subset \Delta_{{\rm sin}_2}$.  

To prove the item, we need to have a few preliminary results. For ease of notation, we let  
\begin{equation}\label{eq:defuvst}
\left\{
\begin{array}{llll}
	u_1 := e_1 + e_2, & v_1 := e_1 - e_2, & s_1 := e_3 + e_4, & t_1 := e_3 - e_4, \\
	u_2 := e_3 - e_2, & v_2 := e_2 + e_3, & s_2 := e_1 - e_4, & t_2 := e_1 + e_4, \\
	u_3 := e_1 + e_3, & v_3 := e_1 - e_3, & s_3 := e_2 + e_4, & t_3 := e_2 - e_4. 
\end{array}
\right.
\end{equation} 
Note that by~\eqref{eq:singleroots}, $\Delta_{{\rm sin}_i} = \pm \{u_i, v_i, s_i, t_i\}$ for any $i = 1,2,3$ . 
%
%
We first have the following fact:   

\begin{lemma}\label{lem:defsigmai}
For any $i = 1,2,3$, $\sigma_{u_i} = \sigma_{v_i} = \sigma_{s_i} = \sigma_{t_i}=:\sigma_i$. 	
\end{lemma}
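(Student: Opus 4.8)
The plan is to prove Lemma~\ref{lem:defsigmai} by relating the four roots within each $\Delta_{{\rm sin}_i}$ through matched pairs and then invoking Lemma~\ref{lem:sigmaxsigmay}, which asserts that the two roots of any {\em iop} carry the same sign $\sigma$. First I would recall from the decomposition~\eqref{eq:singleroots} (equivalently~\eqref{eq:defuvst}) that each $\Delta_{{\rm sin}_i} = \pm\{u_i,v_i,s_i,t_i\}$ consists of four mutually orthogonal (or proportional) long roots, and that by item~{\em (c)} of Theorem~\ref{prop:orthogonalpairs} every orthogonal pair of long roots lying in a common $\Delta_{{\rm sin}_i}$ actually arises as an {\em iop}. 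Concretely, for $i=1$ the roots are $\pm\{e_1+e_2,\,e_1-e_2,\,e_3+e_4,\,e_3-e_4\}$, and one checks directly from case~{\em (c)} of Theorem~\ref{prop:orthogonalpairs} (all long roots, pairwise orthogonal) that $(u_1,v_1)$, $(u_1,s_1)$, and $(u_1,t_1)$ are each {\em iop}s; similarly for $i=2,3$.

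The key steps, in order, are: (i) fix $i$ and exhibit, for each unordered pair among $\{u_i,v_i,s_i,t_i\}$ that is orthogonal, an {\em iop} realizing it — this is a finite verification using the explicit $F_4$ long-root system from~\eqref{eq:F4longroots} and the characterization of {\em iop}s in Theorem~\ref{prop:orthogonalpairs}{\em (c)}; (ii) apply Lemma~\ref{lem:sigmaxsigmay} to each such {\em iop} to deduce $\sigma_{u_i}=\sigma_{v_i}$, $\sigma_{u_i}=\sigma_{s_i}$, and $\sigma_{u_i}=\sigma_{t_i}$; (iii) conclude $\sigma_{u_i}=\sigma_{v_i}=\sigma_{s_i}=\sigma_{t_i}$, and name this common value $\sigma_i$. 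Since the $F_4$ long roots and the $E_6$ imaginary roots are identified by the linear isometry $\pi_{F_4\to E_6}$ of Prop.~\ref{prop:isometry}, which preserves {\em iop}s, the same argument gives the statement for $E_6$; alternatively one simply observes the claim is stated at the level of single roots and $\sigma$ is defined the same way in both cases, so verifying it for $F_4$ suffices.

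I do not expect any real obstacle here: the only content is (a) checking that within each $\Delta_{{\rm sin}_i}$ one can chain the four roots together through orthogonal pairs, and (b) confirming each such orthogonal pair is genuinely an {\em iop} rather than merely an orthogonal pair of single roots — but Theorem~\ref{prop:orthogonalpairs}{\em (c)} says precisely that in $F_4$ an {\em iop} is exactly an orthogonal pair of long roots, so (b) is immediate. The mildly delicate point is making sure the decomposition~\eqref{eq:singleroots} was chosen so that two roots lie in the same $\Delta_{{\rm sin}_i}$ iff they are proportional or orthogonal (this is asserted in the text right after~\eqref{eq:singleroots}), which guarantees that orthogonal pairs stay inside a single block and that the chaining in step~(i) never leaves $\Delta_{{\rm sin}_i}$. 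Everything else is bookkeeping.
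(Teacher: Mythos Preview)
Your proposal is correct and follows essentially the same route as the paper: observe that $u_i,v_i,s_i,t_i$ are pairwise orthogonal long (hence single) roots, invoke Theorem~\ref{prop:orthogonalpairs}{\em (c)} to see that each orthogonal pair is an {\em iop}, and then apply Lemma~\ref{lem:sigmaxsigmay}. The paper's proof is just the two-line version of what you wrote; your additional remark about carrying the argument to $E_6$ via $\pi_{F_4\to E_6}$ is unnecessary here since the reduction to $F_4$ has already been made in the surrounding text.
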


\begin{proof}
	By Theorem~\ref{prop:orthogonalpairs}, any two orthogonal roots in $\Delta_{\rm sin}$ form an {\em iop}. Since $u_i$, $v_i$, $s_i$, and $t_i$ are pairwise orthogonal, the result is then a consequence of Lemma~\ref{lem:sigmaxsigmay}. 
\end{proof}

Given an $i = 1,2,3$, we let $\epsilon_i := (\epsilon_{v_i}, \epsilon_{s_i}, \epsilon_{t_i})$ b a vector in $\R^3$ whose value will be specified later. 
We define elements of $\g_0$ as follows: 
\begin{equation}\label{eq:defsigmayz}
\left\{
\begin{array}{lll}
\mathbf{Y}_i(\epsilon_i) & := & Y_{u_i} + \epsilon_{v_i} Y_{v_i} + \epsilon_{s_i} Y_{s_i} + \epsilon_{t_i} Y_{t_i}, \vspace{3pt}\\
\mathbf{Z}_i(\epsilon_i) & := & Z_{u_i} + \epsilon_{v_i} Z_{v_i} + \epsilon_{s_i} Z_{s_i} + \epsilon_{t_i} Z_{t_i}.
\end{array}
\right.
\end{equation}
The following result directly follows from~\eqref{lem3:cond1},~\eqref{eq:defuvst}, and Lemma~\eqref{lem:defsigmai}: 

\begin{lemma}\label{lem:computesigmasigma}  
The following computational results hold:   
\begin{equation}\label{eq:sigmasigma}
\left\{
\begin{array}{lll}
	[\mathbf{Y}_1(\epsilon_1), \mathbf{Y}_2(\epsilon_2)] & = & 
	\mu_1 Y_{u_3} -  \mu_2 Y_{v_3} - \mu_3 Y_{s_3} - \mu_4 Y_{t_3}, \vspace{3pt}\\
	
	[\mathbf{Z}_1(\epsilon_1), \mathbf{Z}_2(\epsilon_2) ] & = & 
	-\mu_1 Y_{u_3} -  \mu_2 Y_{v_3} - \mu_3 Y_{s_3} - \mu_4 Y_{t_3}, \vspace{3pt}\\
		
	[\mathbf{Y}_1(\epsilon_1), \mathbf{Z}_2(\epsilon_2) ] & = & 
	\nu_1 Z_{u_3} + \nu_2 Z_{v_3} + \nu_3 Z_{s_3} + \nu_4 Z_{t_3}, \vspace{3pt}\\
	
	[\mathbf{Z}_1(\epsilon_1), \mathbf{Y}_2(\epsilon_2) ] & = &
	\nu_1 Z_{u_3} -  \nu_2 Z_{v_3} - \nu_3 Z_{s_3} - \nu_4 Z_{t_3}, \vspace{3pt}\\	
\end{array}
\right. 
\end{equation}
where the coefficients $\mu_i$ and $\nu_i$, for $i= 1,2,3,4$, are given by
 \begin{equation}\label{eq:c**}
 \left\{
\begin{array}{lll}
	\mu_1 & := & \nu_1 \,\, := \,\, c_{u_1,u_2} + \epsilon_{v_1}\epsilon_{v_2} c_{v_1,v_2} + \epsilon_{s_1}\epsilon_{s_2} c_{s_1,s_2} + \epsilon_{t_1}\epsilon_{t_2} c_{t_1,t_2}, \vspace{3pt}\\
	\mu_2 & := & \sigma_2 \epsilon_{v_2} c_{u_1, -v_2} + \sigma_2 \epsilon_{v_1} c_{v_1, -u_2} - \epsilon_{s_1} \epsilon_{t_2} c_{s_1, -t_2} - \epsilon_{t_1} \epsilon_{s_2} c_{t_1, -s_2}, \vspace{3pt}\\
	\mu_3 & := & \sigma_2 \epsilon_{s_2} c_{u_1, -s_2} - \epsilon_{v_1}\epsilon_{t_2} c_{v_1, -t_2} + \sigma_2 \epsilon_{s_1} c_{s_1, -u_2} - \epsilon_{t_1}\epsilon_{v_2} c_{t_1, -v_2},\vspace{3pt}\\ 
	\mu_4 & := & \sigma_2 \epsilon_{t_2} c_{u_1,-t_2} - \epsilon_{v_1}\epsilon_{s_2} c_{v_1, -s_2} - \epsilon_{s_1}\epsilon_{v_2} c_{s_1,-v_2} + \sigma_2\epsilon_{t_1} c_{t_1,-u_2}, \vspace{3pt}\\
	\nu_2 & := & \sigma_2 \epsilon_{v_2} c_{u_1, -v_2} + \sigma_2 \epsilon_{v_1} c_{v_1, -u_2} + \epsilon_{s_1} \epsilon_{t_2} c_{s_1, -t_2} + \epsilon_{t_1} \epsilon_{s_2} c_{t_1, -s_2}, \vspace{3pt}\\
	\nu_3 & := & \sigma_2 \epsilon_{s_2} c_{u_1, -s_2} + \epsilon_{v_1}\epsilon_{t_2} c_{v_1, -t_2} + \sigma_2 \epsilon_{s_1} c_{s_1, -u_2} + \epsilon_{t_1}\epsilon_{v_2} c_{t_1, -v_2},\vspace{3pt}\\ 
	\nu_4 & := & \sigma_2 \epsilon_{t_2} c_{u_1,-t_2} + \epsilon_{v_1}\epsilon_{s_2} c_{v_1, -s_2} + \epsilon_{s_1}\epsilon_{v_2} c_{s_1,-v_2} + \sigma_2\epsilon_{t_1} c_{t_1,-u_2}. 
\end{array}
\right.
\end{equation}
\end{lemma}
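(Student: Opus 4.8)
\textbf{Proof plan for Lemma~\ref{lem:computesigmasigma}.}
The plan is to treat this as a pure bookkeeping computation: expand the four brackets on the left-hand sides of~\eqref{eq:sigmasigma} using the bilinearity of the Lie bracket, apply the structure formulas~\eqref{lem3:cond1} from Proposition~\ref{lem3} term by term, and collect the surviving summands. First I would write out $[\mathbf{Y}_1(\epsilon_1), \mathbf{Y}_2(\epsilon_2)]$ as a sum of sixteen brackets $[\epsilon_{w_1} Y_{w_1}, \epsilon_{w_2} Y_{w_2}]$ with $w_1 \in \{u_1, v_1, s_1, t_1\}$, $w_2 \in \{u_2, v_2, s_2, t_2\}$ (with $\epsilon_{u_i} := 1$). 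For each such pair $(w_1, w_2)$, formula~\eqref{lem3:cond1} gives $[Y_{w_1}, Y_{w_2}] = c_{w_1, w_2} Y_{w_1 + w_2} - \sigma_{w_2} c_{w_1, -\theta w_2} Y_{w_1 - \theta w_2}$, so I need the combinatorics of which of the vectors $w_1 \pm w_2$ (equivalently $w_1 + w_2$, $w_1 - \theta w_2$, and — after also taking $-\theta$ — their reflections) land in $\Delta_{\mathrm{sin}_3} = \pm\{u_3, v_3, s_3, t_3\}$ versus being non-roots.

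The key step is the explicit root-sum table coming from~\eqref{eq:defuvst}. Since all of $u_1, v_1, s_1, t_1, u_2, v_2, s_2, t_2$ lie in the $F_4$ root system with the coordinates fixed in~\eqref{eq:defuvst}, each $w_1 + w_2$ is a vector of the form $\pm e_i \pm e_j$ or $\pm e_i \pm e_k$ etc.; a direct check shows that exactly those sums producing a vector in $\Delta_{\mathrm{sin}_3}$ survive, and each of the four target roots $u_3, v_3, s_3, t_3$ is hit by precisely four of the sixteen terms (the decomposition~\eqref{eq:singleroots} was arranged precisely so that $\Delta_{\mathrm{sin}_i} + \Delta_{\mathrm{sin}_j}$ meets the root system only inside $\Delta_{\mathrm{sin}_k}$). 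Matching coefficients, and using Lemma~\ref{lem:defsigmai} to replace every $\sigma_{w_2}$ by the common value $\sigma_2$, yields the four coefficients $\mu_1, \mu_2, \mu_3, \mu_4$ exactly as displayed in~\eqref{eq:c**}; note $\mu_1 = \nu_1$ because the $u_3$-coefficient receives only ``$+$''-type contributions $c_{w_1, w_2} Y_{w_1+w_2}$ (no $\theta$-reflected term hits $u_3$), so the sign pattern is the same for $[\mathbf{Y}_1, \mathbf{Y}_2]$ and $[\mathbf{Y}_1, \mathbf{Z}_2]$.

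For the remaining three brackets I would repeat the expansion with the other three lines of~\eqref{lem3:cond1}: $[Z_{w_1}, Z_{w_2}] = -c_{w_1, w_2} Y_{w_1+w_2} - \sigma_{w_2} c_{w_1, -\theta w_2} Y_{w_1 - \theta w_2}$ flips the sign of the ``$+$''-type terms (hence $-\mu_1$) but not the ``$-\theta$''-type ones, reproducing $-\mu_1 Y_{u_3} - \mu_2 Y_{v_3} - \mu_3 Y_{s_3} - \mu_4 Y_{t_3}$; $[Y_{w_1}, Z_{w_2}] = c_{w_1,w_2} Z_{w_1+w_2} + \sigma_{w_2} c_{w_1,-\theta w_2} Z_{w_1-\theta w_2}$ produces $Z$'s with an internal $+$ sign, giving $\nu_1, \nu_2, \nu_3, \nu_4$; and $[Z_{w_1}, Y_{w_2}] = c_{w_1,w_2} Z_{w_1+w_2} - \sigma_{w_2} c_{w_1,-\theta w_2} Z_{w_1-\theta w_2}$ flips the sign of the reflected terms only, giving $\nu_1 Z_{u_3} - \nu_2 Z_{v_3} - \nu_3 Z_{s_3} - \nu_4 Z_{t_3}$. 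The main obstacle is purely clerical: one must correctly track, for each of the sixteen index pairs, whether the contribution comes via $w_1 + w_2$ or via $w_1 - \theta w_2$ (recalling $\theta$ negates $e_4$ in the present $F_4$ realization, so $\theta u_2 = e_3 + e_2$, $\theta s_2 = e_1 + e_4$, etc.), and to keep the signs of the $\epsilon$-variables and of $c_{w_1, -\theta w_2} = \pm c_{w_1, w_2}$ consistent; once the root-sum table is written down explicitly, the identities in~\eqref{eq:sigmasigma}–\eqref{eq:c**} follow by inspection.
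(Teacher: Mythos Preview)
Your plan is exactly the paper's: the paper states that the lemma ``directly follows from~\eqref{lem3:cond1},~\eqref{eq:defuvst}, and Lemma~\ref{lem:defsigmai}'', i.e., expand bilinearly, apply Proposition~\ref{lem3}, read off which sums $w_1\pm w_2$ land in $\Delta_{{\rm sin}_3}$ from the explicit coordinates~\eqref{eq:defuvst}, and replace each $\sigma_{w_2}$ by $\sigma_2$ via Lemma~\ref{lem:defsigmai}.

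One correction, though, which would derail the execution if carried through: in this computation the ambient case is $F_4$ with \emph{only imaginary roots} (the $E_6$ case has already been reduced to $F_4$ via Prop.~\ref{prop:isometry} at the start of the proof of item~2 of Prop.~\ref{prop:computesigmapsi}). For imaginary roots $\theta\alpha=\alpha$, so $-\theta w_2=-w_2$ and the second summand in~\eqref{lem3:cond1} is $-\sigma_{w_2}c_{w_1,-w_2}Y_{w_1-w_2}$; this is why every structure constant appearing in~\eqref{eq:c**} is of the form $c_{w_1,w_2}$ or $c_{w_1,-w_2}$. Your parenthetical ``$\theta$ negates $e_4$ \ldots\ so $\theta u_2=e_3+e_2$, $\theta s_2=e_1+e_4$'' is mistaken (that involution belongs to the $D_n$ realisation in item~(\emph{f}) of Theorem~\ref{prop:matchedpairs}, and even there $u_2=e_3-e_2$ has no $e_4$-component); using it would produce the wrong root-sum table. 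With $\theta=\mathrm{id}$ on roots, the sixteen sums $w_1+w_2$ and sixteen differences $w_1-w_2$ sort exactly as your outline predicts, and the coefficients~\eqref{eq:c**} fall out.
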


By item~2 of Lemma~\ref{theorem:rootspacedecomp}, $c^2_{*,*} = 1$ for any $c_{*,*}$ in the expression~\eqref{eq:c**}. Furthermore, we have the following fact:

\begin{lemma}\label{lem:equalitiesforc}
The following equalities hold for the $c_{*,*}$ in~\eqref{eq:c**}:
\begin{equation*}
\left\{
\begin{array}{lllllll}
c_{u_1,u_2} c_{v_1,v_2} & = & c_{u_1,-v_2}c_{v_1,-u_2}, & \quad 
& c_{u_1,u_2}c_{s_1,s_2} & = & c_{u_1,-s_2}c_{s_1,-u_2}, \\
c_{u_1,u_2} c_{t_1,t_2} & = & c_{u_1,-t_2}c_{t_1,-u_2}, & \quad 
& c_{u_1,-v_2}c_{s_1,-t_2} & = & c_{u_1,-t_2}c_{s_1,-v_2}, \\
c_{u_1,-v_2}c_{t_1,-s_2} & = & c_{u_1,-s_2}c_{t_1,-v_2} , & \quad 
& c_{u_1,-s_2}c_{v_1,-t_2} & = & c_{u_1,-t_2}c_{v_1,-s_2}, \\
 c_{v_1, -u_2}c_{t_1, -s_2} & = & c_{v_1, -s_2}c_{t_1, -u_2}, & \quad 
& c_{s_1, -u_2}c_{t_1, -v_2} & = & c_{s_1, -v_2}c_{t_1, -u_2}.
\end{array}
\right.
\end{equation*}
\end{lemma}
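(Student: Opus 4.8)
The plan is to derive all eight equalities from a single device: apply the four-root relation of Lemma~\ref{lem:auxiliary} to carefully chosen quadruples of long roots summing to zero, and then rearrange the outcome using the two elementary sign relations $c_{\alpha,\beta} = -c_{\beta,\alpha}$ (skew-symmetry of the bracket) and $c_{\alpha,\beta} = -c_{-\alpha,-\beta}$ (Lemma~\ref{theorem:rootspacedecomp}). Since the computation is carried out on single roots and, by Prop.~\ref{prop:isometry}, the single roots of $F_4$ and $E_6$ are isometric, it suffices to argue inside $F_4$. There, by \eqref{eq:F4longroots}, every long root $\pm e_i \pm e_j$ has squared length $2$, while $2e_i$ and every vector $\pm e_1 \pm e_2 \pm e_3 \pm e_4$ with four nonzero entries fails to be a root; recall also that $c_{\gamma,\delta} = 0$ whenever $\gamma+\delta \notin \Delta$.

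First I would match each target identity to a quadruple $(s,t,u,v)$ of long roots with $s+t+u+v = 0$, with no two of the four roots summing to zero (easily checked case by case), and such that exactly one of the three pairwise sums $s+t$, $t+u$, $u+s$ is not a root. Say $u+s$ is the non-root sum; then $c_{u,s} = 0$, so the third term of the identity in Lemma~\ref{lem:auxiliary} vanishes, while the remaining two sums $s+t$ and $t+u$ are long roots of squared length $2$, so their denominators cancel and we are left with the two-term relation $c_{s,t}\, c_{u,v} = -\, c_{t,u}\, c_{s,v}$, in which $t$ is the common pivot root. Applying skew-symmetry to the cross term $c_{t,u}$, together with the relation $c_{-\alpha,-\beta} = -c_{\alpha,\beta}$ where a negated root must be turned into a positive one, converts this relation into the stated equality, possibly up to interchanging the two factors of a product, which is harmless.

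Concretely, in the notation of \eqref{eq:defuvst}, I would take, for the eight equalities in the order displayed, the quadruples $(u_1,u_2,-v_1,-v_2)$, $(u_1,u_2,-s_1,-s_2)$, $(u_1,u_2,-t_1,-t_2)$, $(u_1,-v_2,s_1,-t_2)$, $(u_1,-v_2,t_1,-s_2)$, $(u_1,-s_2,v_1,-t_2)$, $(v_1,-u_2,t_1,-s_2)$, and $(s_1,-u_2,t_1,-v_2)$; the designated non-root sum in each case is $u_1-v_1 = 2e_2$, $u_1-s_1$, $u_1-t_1$, $u_1+s_1$, $u_1+t_1$, $u_1+v_1 = 2e_1$, $v_1+t_1$, and $s_1+t_1 = 2e_3$, respectively. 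For each quadruple one verifies by direct substitution into \eqref{eq:defuvst} that the four roots sum to zero, that no two are antipodal, and that the indicated pairwise sum is not a root, whereupon Lemma~\ref{lem:auxiliary} and the sign relations yield that identity; the first three equalities use both sign relations, the last five only skew-symmetry.

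The only real obstacle is organizational: for each of the eight equalities one must select a quadruple in which the right pairwise sum falls outside the root system, and then track the signs accurately through the two $c$-relations — a routine but error-prone bookkeeping task rather than a conceptual one. In particular, no analogue of Lemma~\ref{lem:auxiliary} for $G_2$ is needed, precisely because in the $F_4/E_6$ setting all the roots $u_i,v_i,s_i,t_i$ are long and every surviving sum has the common squared length $2$, so the denominators in Lemma~\ref{lem:auxiliary} that matter are equal.
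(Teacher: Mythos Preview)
Your proposal is correct and takes essentially the same approach as the paper: apply Lemma~\ref{lem:auxiliary} to a quadruple of long roots summing to zero so that one of the three terms vanishes, then use equal lengths and the sign relations $c_{\alpha,\beta}=-c_{\beta,\alpha}$ and $c_{\alpha,\beta}=-c_{-\alpha,-\beta}$ to rewrite the surviving two-term identity. The paper carries this out only for the first equality, with the quadruple $(u_1,u_2,-v_1,-v_2)$, and then asserts that the remaining seven are analogous; you have gone further and written down all eight quadruples explicitly, which is more than the paper does.
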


\begin{proof}
We establish below the first equality $c_{u_1,u_2} c_{v_1,v_2}= c_{u_1,-v_2}c_{v_1,-u_2}$. The same arguments can be used to establish the others.  
To proceed, we first note that the four roots $u_1$, $u_2$, $-v_1$, and $-v_2$ satisfy the assumption of Lemma~\ref{lem:auxiliary}. Thus,
$$
\frac{c_{u_1,u_2}c_{-v_1,-v_2}}{|u_1 + u_2|^2} + 
\frac{c_{u_2,-v_1}c_{u_1,-v_2}}{|u_2 - v_1|^2} + \frac{c_{-v_1,u_1}c_{u_2,-v_2}}{|u_1 - v_1|^2} = 0.
$$
Since $u_1-v_1$ and $u_2-v_2$ are not roots, $c_{-v_1,u_1} = c_{u_2,-v_2} = 0$. Next, note that $|u_1 + u_2|^2 = |u_1 - v_2|^2$. It then follows that $c_{u_1,u_2} c_{v_1,v_2}= c_{u_1,-v_2}c_{v_1,-u_2}$. 
\end{proof}

\begin{remark}\label{rmk:equalitiesforc}
There are a few other equalities of the $c_{*,*}$'s in~\eqref{eq:c**}: 
$$
\left\{
\begin{array}{lllllll}
c_{v_1, v_2}c_{s_1, s_2} & = & c_{v_1, -s_2}c_{s_1,-v_2}, & \quad 
& c_{v_1, v_2}c_{t_1, t_2} & = & c_{v_1, -t_2}c_{t_1,-v_2}, \\
c_{v_1, -u_2}c_{s_1, -t_2} & = & c_{s_1, -u_2}c_{v_1, -t_2}, & \quad 
& c_{s_1,s_2}c_{t_1, t_2} & = & c_{s_1, -t_2}c_{t_1, -s_2}, 
\end{array}
\right.
$$
which follow as consequences of Lemma~\ref{lem:equalitiesforc}. 
\end{remark}

With the preliminaries above, we will now compute $[\Sigma_1, \Psi_{u_2, v_2}]$.  
The values of $\epsilon_1$ and $\epsilon_2$ are specified as follows: For $\epsilon_1 = (\epsilon_{v_1}, \epsilon_{s_1}, \epsilon_{t_1})$, each of its entries can be either $1$ or $-1$; For $\epsilon_2 = (\epsilon_{v_2}, \epsilon_{s_2}, \epsilon_{t_2})$, the first entry $\epsilon_{v_2}$ can be either $1$ or $-1$ and the other two entires $\epsilon_{s_2}$ and $\epsilon_{t_2}$ are $0$. 
Since $\epsilon_{s_2} = \epsilon_{t_2} = 0$, 
the last two addends in the expressions of $\mu_i$, $\nu_i$ for $i = 1,2$ in~\eqref{eq:c**} and the first two addends in $\mu_i$, $\nu_i$ for $i = 3,4$ are all zero. 
Now, we define 
\begin{equation}\label{eq:defpq1}
\begin{array}{lll}
p_1:= \sigma_2 \epsilon_{v_1} c_{v_1, -u_2}, &  p_2 := \sigma_2 \epsilon_{s_1} c_{s_1, -u_2}, & p_3:=  \sigma_2\epsilon_{t_1} c_{t_1,-u_2}, \\
p_4:= \sigma_2 \epsilon_{v_2} c_{u_1, -v_2}, & q_1 :=c_{u_1, u_2}, & q_2 := c_{t_1,-u_2}c_{u_1,-v_2}c_{t_1,-v_2}. 
\end{array}
\end{equation}
Note that each $p_i$, for $i = 1,\ldots, 4$, is a free variable taking value from the set $\{1, -1\}$ while $q_1$ and $q_2$ are fixed. By~\eqref{eq:c**} and Lemma~\ref{lem:equalitiesforc}, we obtain the following expressions for the coefficients $\mu_i$ and $\nu_i$  in~\eqref{eq:sigmasigma}:  
\begin{equation}\label{eq:c**reduced}
 \left\{
\begin{array}{lll}
	\mu_1 = \nu_1 =  q_1 + q_1p_1p_4, & & \mu_2 = \nu_2 = p_1 + p_4, \\
	\mu_3  =  p_2 - q_2 p_3p_4, & & \nu_3  = p_2 + q_2 p_3p_4\\ 
	\mu_4  = p_3 - q_2p_2p_4, & & \nu_4 =  p_3 + q_2p_2p_4. 
\end{array}
\right.
\end{equation}
We can thus obtain from~\eqref{eq:c**reduced} all possible values of $\mu_i$ and $\nu_i$ by choosing different values of $p_i\in \{1,-1\}$ for $i = 1,\ldots, 4$.  
Examination of these values, combined with Lemma~\ref{lem:computesigmasigma}, leads to the fact that 
%
\begin{equation}\label{eq:sigmapsiresult}
[\Sigma_1, \Psi_{u_2, v_2}] \equiv \Sigma_{3} \cup \{Y_{u_3}\pm Y_{v_3}, Z_{u_3} \pm Z_{v_3}\} \cup \{Y_{s_3}\pm Y_{t_3}, Z_{s_3} \pm Z_{t_3}\}. 
\end{equation}
Finally, by item { (c)} of Theorem~\ref{prop:orthogonalpairs}, $(u_3,v_3)$ and $(s_3,t_3)$ are {\em iop}s. We thus conclude from~\eqref{eq:sigmapsiresult} that $[\Sigma_1, \Psi_{u_2, v_2}]\equiv \Sigma_3 \cup \Psi_{u_3,v_3} \cup \Psi_{s_3,t_3}$. \hfill{\qed}

\section{Proof of Prop.~\ref{prop:computesigmasigma}}
We establish below the two items.
\vspace{.1cm}

{\em Proof of item~1.}
Let $u_i$, $v_i$, $s_i$, and $t_i$ be defined in~\eqref{eq:defuvst}. Recall that $\Delta_{{\rm sin}_i} = \pm \{u_i, v_i, s_i, t_i\}$ for all $i = 1,2,3$. 
Because roots in $\Delta_{{\rm sin}_i}$ are either proportional or strongly orthogonal, we obtain that 
$[\Sigma_i,\Sigma_i] \equiv \{\mathrm{i} H_{u_i} \pm \mathrm{i} H_{v_i} \pm \mathrm{i} H_{s_i} \pm \mathrm{i} H_{t_i} \}$. 
The item then follows from the fact that $\{\pm u_i \pm v_i \pm s_i \pm t_i \}= 2(\Delta_{\rm sin} \backslash \Delta_{{\rm sin}_i})$. 

\vspace{.1cm}

{\em Proof of item~2.} 
By symmetry, we prove only for $[\Sigma_1, \Sigma_2]$. 
The proof is similar to the one for item~2 of Prop.~\ref{prop:computesigmapsi}.  
Let $\mathbf{Y}_i(\epsilon_i)$ and $\mathbf{Z}_i(\epsilon_i)$ be defined in~\eqref{eq:defsigmayz}  
and let each entry of $\epsilon_1$ or $\epsilon_2$ be either $1$ or $-1$.    
Lemmas~\ref{lem:defsigmai} -- \ref{lem:equalitiesforc} as well as Remark~\ref{rmk:equalitiesforc} still hold. In addition to the $p_i$, for $i = 1,\ldots, 4$, defined in~\eqref{eq:defpq1}, we further let 
$$
p_5 := \sigma_2\epsilon_{s_2} c_{u_1, -s_2} \quad \mbox{and} \quad   
p_6 := \sigma_2\epsilon_{t_2} c_{u_1, -t_2}.
$$
Then, all the $p_i$, for $i = 1,\ldots, 6$, can be treated as free variables which take value from the set $\{1,-1\}$ while $q_1$ and $q_2$ defined in~\eqref{eq:defpq1} are fixed. By~\eqref{eq:c**} and Lemma~\ref{lem:equalitiesforc}, we obtain the following expressions for the coefficients $\mu_i$ and $\nu_i$  in~\eqref{eq:sigmasigma}: 
\begin{equation}\label{eq:c**reduced1}
 \left\{
\begin{array}{lll}
	\mu_1 & = & \nu_1 \,\, = \,\,  q_1 + q_1p_1p_4 + q_1p_2p_5 + q_1 p_3p_6, \\
	\mu_2 & = & p_1 + p_4 - q_2 p_2p_6 - q_2 p_3 p_5, \\
	\mu_3 & = & p_2 + p_5  - q_2p_1 p_6 - q_2 p_3p_4, \\ 
	\mu_4 & = & p_3 + p_6 - q_2p_1p_5 - q_2p_2p_4, \\
	\nu_2 & = & p_1 + p_4 + q_2 p_2p_6 + q_2 p_3 p_5, \\
	\nu_3 & = & p_2 + p_5 + q_2p_1 p_6 + q_2 p_3p_4, \\  
	\nu_4 & = & p_3 + p_6 + q_2p_1p_5 + q_2p_2p_4. 
\end{array}
\right.
\end{equation}
We can thus obtain from~\eqref{eq:c**reduced1} all possible values of $\mu_i$ and $\nu_i$ by choosing different values of $p_i\in \{1,-1\}$ for $i = 1,\ldots, 6$.  
Examination of these values, combined with Lemma~\ref{lem:computesigmasigma}, leads to the fact that
$[\Sigma_1,\Sigma_2] \equiv \Sigma_3 \cup \bigcup_{x\in \Delta_{{\rm sin}_3}} \{Y_x, Z_x\}$.  
\hfill{\qed}

\section{Proof of Prop.~\ref{prop:computeg2}} 
We establish below the two items.
\vspace{.1cm}

{\em Proof of item~1.} For each $i = 1,2,3$, $x_i$ and $y_i$ are strongly orthogonal. It follows that $[\Phi_i, \Psi_i]\equiv [\Phi_i, \Phi_i]$ and, by computation, $[\Phi_i, \Phi_i]\equiv \Phi_i$. Next, for $[\Psi_i,\Psi_i]$,  we first obtain by computation that $[\Psi_i,\Psi_i]\equiv \{ \mathrm{i} (H_{y_i} \pm H_{x_i}) \}$. Let $(i,j,k)$ be a cyclic rotation of $(1,2,3)$. Then, $2y_j = -(y_i + 3x_i)$ and $2y_k = -(y_i - 3x_i)$. By the linearity $h_{\alpha + \beta} = h_\alpha + h_\beta$ and the scaling $H_\alpha = \nicefrac{2h_\alpha}{|\alpha|^2}$, we obtain that 
$$
\left\{
\begin{array}{lllll}
	2\mathrm{i} H_{y_j} & = & - \nicefrac{|y_i|^2}{|y_j|^2} \, \mathrm{i}H_{y_i} - \nicefrac{3|x_i|^2}{|y_j|^2}\, \mathrm{i} H_{x_i} & = & -\mathrm{i}(H_{y_i} + H_{x_i}), \vspace{3pt}\\ 
	2\mathrm{i} H_{y_k} & = & - \nicefrac{|y_i|^2}{|y_j|^2} \, \mathrm{i}H_{y_i} + \nicefrac{3|x_i|^2}{|y_j|^2}\, \mathrm{i} H_{x_i} & = & -\mathrm{i}(H_{y_i} - H_{x_i}).
\end{array}
\right.
$$
It then follows that $[\Psi_i, \Psi_i] \equiv \{\mathrm{i}H_{y_j}, \mathrm{i} H_{y_k}\}$. 
\vspace{.1cm}

{\em Proof of item~2.}
The results for $[\Phi_i, \Phi_j]$ and $[\Phi_i, \Psi_j]$ directly follow from computation. We establish the result for $[\Psi_i,\Psi_j]$. By symmetry, we can assume, without loss of generality, that $i = 1$ and $j = 2$. 
To proceed, we first have the following fact adapted from Lemma 5.1~\cite[Ch.~III]{SH:79}:

\begin{lemma}\label{lem:threerootsumzero}
If three roots $\alpha$, $\beta$, $\gamma$ sum to $0$, then $\nicefrac{c_{\alpha, \beta}}{|\gamma|^2} = \nicefrac{c_{\beta, \gamma}}{|\alpha|^2} = \nicefrac{c_{\gamma, \alpha}}{|\beta|^2}$.	
\end{lemma}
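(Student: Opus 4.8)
The plan is to derive the identity from the Jacobi identity applied to the Chevalley generators $X_\alpha$, $X_\beta$, $X_\gamma$, using only the structural relations recorded in Lemma~\ref{theorem:rootspacedecomp}. First I would observe that $\alpha$, $\beta$, $\gamma$ are pairwise non-proportional: if, say, $\alpha=\pm\beta$, then $\gamma=-\alpha-\beta$ would equal either $0$ or $\mp 2\alpha$, neither of which is a root. Consequently $\alpha$ and $\beta$ are linearly independent in $\h^*$, and since $\mu\mapsto h_\mu$ is a linear isomorphism onto its image, $h_\alpha$ and $h_\beta$ are linearly independent in $\h$; this is the fact I will invoke at the very end.

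Next, since $\alpha+\beta=-\gamma$, $\beta+\gamma=-\alpha$, and $\gamma+\alpha=-\beta$ are all roots, Lemma~\ref{theorem:rootspacedecomp} gives $[X_\alpha,X_\beta]=c_{\alpha,\beta}X_{-\gamma}$, and similarly for the two cyclic brackets. Bracketing once more and using $[X_{-\gamma},X_\gamma]=-[X_\gamma,X_{-\gamma}]=-H_\gamma$ together with its cyclic analogues, I would obtain
\[
[[X_\alpha,X_\beta],X_\gamma]=-c_{\alpha,\beta}H_\gamma,\qquad [[X_\beta,X_\gamma],X_\alpha]=-c_{\beta,\gamma}H_\alpha,\qquad [[X_\gamma,X_\alpha],X_\beta]=-c_{\gamma,\alpha}H_\beta.
\]
The Jacobi identity then yields $c_{\alpha,\beta}H_\gamma+c_{\beta,\gamma}H_\alpha+c_{\gamma,\alpha}H_\beta=0$.

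Finally I would rewrite this in terms of the $h$'s. Using $H_\mu=\nicefrac{2h_\mu}{|\mu|^2}$ and the linearity $h_\gamma=h_{-\alpha-\beta}=-h_\alpha-h_\beta$ (immediate from $\mu(H)=B(h_\mu,H)$), the relation becomes
\[
\left(\frac{c_{\beta,\gamma}}{|\alpha|^2}-\frac{c_{\alpha,\beta}}{|\gamma|^2}\right)h_\alpha+\left(\frac{c_{\gamma,\alpha}}{|\beta|^2}-\frac{c_{\alpha,\beta}}{|\gamma|^2}\right)h_\beta=0.
\]
Since $h_\alpha$ and $h_\beta$ are linearly independent, both coefficients vanish, which is exactly the asserted chain of equalities. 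There is no genuine obstacle here; the only point requiring care is the sign bookkeeping in the double brackets (the minus sign coming from $[X_{-\gamma},X_\gamma]=-H_\gamma$) and making sure the subscripts on $c_{\alpha,\beta}$, $c_{\beta,\gamma}$, $c_{\gamma,\alpha}$ are kept in the cyclic order dictated by the Jacobi identity.
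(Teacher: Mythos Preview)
Your proof is correct and follows exactly the standard Jacobi-identity argument; the paper does not supply its own proof but merely cites Helgason \cite[Ch.~III, Lemma~5.1]{SH:79}, whose proof is precisely the one you have written out.
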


Note that $\sum^3_{i = 1}x_i = \sum^3_{i = 1}y_i = 0$ and, moreover,  $|x_1| = |x_2| = |x_3|$ and $|y_1| = |y_2| = |y_3|$. 
By Lemma~\ref{lem:threerootsumzero}, the following hold:
\begin{equation}\label{eq:n123}
c_{x_1, x_{2}} = c_{x_2,x_3} = c_{x_3,x_1} = \pm 2\quad
\mbox{and} 
\quad 
c_{y_1,y_2} = c_{y_2,y_3} = c_{y_3,y_1} = \pm 1.
\end{equation}
Note that $[X_{x_i}, X_{x_j}] = c_{x_i,x_j} X_{-x_k}$ and $[X_{y_i}, X_{y_j}] = c_{y_i,y_j} X_{-y_k}$. By negating (if necessary) $X_{x_1}$ and $X_{-x_1}$ (resp. $X_{y_1}$ and $X_{-y_1}$), we can assume that every $c_{x_i, x_j}$ (resp. $c_{y_i,y_j}$) in~\eqref{eq:n123} is positive. We next have the following fact:

\begin{lemma}
For any real numebrs $\epsilon$ and $\epsilon'$, the following hold: 	
\begin{equation}\label{eq:compforG2}
\left\{
\begin{array}{lll}
	{[Y_{y_1} + \epsilon Y_{x_1}, Y_{y_2} + \epsilon' Y_{x_2}]} & = & 
	\mu_1 Y_{y_3} + \mu_3 Y_{x_3}, \vspace{3pt}\\
	{[Z_{y_1} + \epsilon Z_{x_1}, Z_{y_2} + \epsilon' Z_{x_2}]} & = &
	\mu_2 Y_{y_3} + \mu_4 Y_{x_3}, \vspace{3pt}\\
	{[Y_{y_1} + \epsilon Y_{x_1}, Z_{y_2} + \epsilon' Z_{x_2}]} & = & 
	\nu_1 Z_{y_3} + \nu_3 Z_{x_3}, \vspace{3pt}\\
	{[Z_{y_1} + \epsilon Z_{x_1}, Y_{y_2} + \epsilon' Y_{x_2}]} & = & 
	\nu_2 Z_{y_3} + \nu_4 Z_{x_3},	
\end{array}
\right.
\end{equation}
where coefficients $\mu_i$ and $\nu_i$, for $i = 1,2,3,4$, are given by 
\begin{equation}\label{eq:munumunu}
\left\{
\begin{array}{lll}
\mu_1 & := & -\nu_1 \,\, := \,\, -(1 + \epsilon\epsilon' c_{x_1, -x_2}), \\
\mu_2 & := & \nu_2 \,\, := \,\, 1 - \epsilon\epsilon' c_{x_1, -x_2}, \\
\mu_3 & := & \epsilon c_{x_1,y_2} + \epsilon' c_{y_1,-x_2} - 2\epsilon\epsilon', \\
\nu_3 & := & \epsilon c_{x_1,y_2} + \epsilon' c_{y_1,-x_2} + 2\epsilon\epsilon', \\
\mu_4 & := & - \epsilon c_{x_1,y_2} + \epsilon' c_{y_1,-x_2} + 2\epsilon\epsilon', \\
\nu_4 & := & \epsilon c_{x_1,y_2} - \epsilon' c_{y_1,-x_2} + 2\epsilon\epsilon'. 
\end{array}
\right. 
\end{equation}  
\end{lemma}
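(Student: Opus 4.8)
The plan is to prove \eqref{eq:compforG2}--\eqref{eq:munumunu} by expanding each of the four brackets bilinearly and then applying the commutation rules of Proposition~\ref{lem3}, using crucially that $\g_0$ is the compact real form of $\g$. For the compact real form the Cartan involution is the identity, so every root of $\Delta$ is compact imaginary, $\theta\alpha=\alpha$ and $\sigma_\alpha=1$ for all $\alpha$; in particular $-\theta\beta=-\beta$, so \eqref{lem3:cond1} reduces, for $\beta\notin\{\pm\alpha\}$, to
\[
[Y_\alpha,Y_\beta]=c_{\alpha,\beta}Y_{\alpha+\beta}-c_{\alpha,-\beta}Y_{\alpha-\beta},\qquad
[Z_\alpha,Z_\beta]=-c_{\alpha,\beta}Y_{\alpha+\beta}-c_{\alpha,-\beta}Y_{\alpha-\beta},
\]
\[
[Y_\alpha,Z_\beta]=c_{\alpha,\beta}Z_{\alpha+\beta}+c_{\alpha,-\beta}Z_{\alpha-\beta},\qquad
[Z_\alpha,Y_\beta]=c_{\alpha,\beta}Z_{\alpha+\beta}-c_{\alpha,-\beta}Z_{\alpha-\beta},
\]
with $Y_\gamma=Z_\gamma=0$ when $\gamma$ is not a root, and where for imaginary $\gamma$ one has $Y_{-\gamma}=-Y_\gamma$ and $Z_{-\gamma}=Z_\gamma$.

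The next step is the root bookkeeping. Writing $x_i=e_j-e_k$ and $y_i=e_j+e_k-2e_i$ in coordinates, a direct inspection gives $y_1+y_2=-y_3$, $y_1-y_2=-3x_3$ (not a root), $y_1-x_2=-x_3$, $y_1+x_2$ (not a root), $x_1+y_2=x_3$, $x_1-y_2$ (not a root), $x_1+x_2=-x_3$, and $x_1-x_2=y_3$. Hence each bracket in \eqref{eq:compforG2} lies in $\spann\{Y_{x_3},Y_{y_3}\}$ or in $\spann\{Z_{x_3},Z_{y_3}\}$. For example, expanding $[Y_{y_1}+\epsilon Y_{x_1},Y_{y_2}+\epsilon'Y_{x_2}]=[Y_{y_1},Y_{y_2}]+\epsilon'[Y_{y_1},Y_{x_2}]+\epsilon[Y_{x_1},Y_{y_2}]+\epsilon\epsilon'[Y_{x_1},Y_{x_2}]$, substituting the reduced rules above, and using $Y_{-x_3}=-Y_{x_3}$ and $Y_{-y_3}=-Y_{y_3}$, the coefficient of $Y_{y_3}$ is $-c_{y_1,y_2}-\epsilon\epsilon'c_{x_1,-x_2}$ and that of $Y_{x_3}$ is $\epsilon c_{x_1,y_2}+\epsilon'c_{y_1,-x_2}-\epsilon\epsilon'c_{x_1,x_2}$. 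Invoking the normalization recorded before the statement, namely that $c_{y_1,y_2}$ and $c_{x_1,x_2}$ have already been made equal to $1$ and $2$ (using \eqref{eq:n123} and Lemma~\ref{lem:threerootsumzero}), these become exactly $\mu_1$ and $\mu_3$. The other three brackets are handled identically and yield $\mu_2,\mu_4$ and $\nu_1,\dots,\nu_4$; the identities $\mu_1=-\nu_1$ and $\mu_2=\nu_2$ drop out of the sign discrepancy between the $[Y,Y]$/$[Z,Z]$ rules and the $[Y,Z]$/$[Z,Y]$ rules above.

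This is a purely mechanical verification, so I do not anticipate any real difficulty. The two places that need care are: correctly deciding which of the twelve sums and differences of $\{y_1,x_1\}$ against $\{y_2,x_2\}$ are roots (so that the vanishing terms are dropped), and tracking consistently the asymmetry $Y_{-\gamma}=-Y_\gamma$ versus $Z_{-\gamma}=+Z_\gamma$, since it is precisely this that dictates the sign patterns appearing in \eqref{eq:munumunu}.
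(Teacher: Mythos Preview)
Your proposal is correct and follows essentially the same approach as the paper: the paper's proof simply records the root identities $y_1+y_2=x_2-x_1=-y_3$ and $x_1+y_2=x_2-y_1=x_3$, notes that $\sigma_\alpha=1$ for all $\alpha$ since $\g_0$ is compact, and says the lemma follows from \eqref{lem3:cond1} and \eqref{eq:n123}. Your write-up expands this out in full, which is fine; the only (harmless) slip is the count of ``twelve'' sums and differences, where there are in fact eight.
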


\begin{proof}
Note that 
$
y_1 + y_2 = x_2 - x_1 = -y_3 
$ 
and
$
x_1 + y_2 = x_2 - y_1 = x_3
$. Also, note that $\g_0$ is the compact real form of $\g$ and, hence, $\sigma_\alpha = 1$ for all $\alpha\in \Delta$. 
The lemma then follows from~\eqref{lem3:cond1} and~\eqref{eq:n123}. 
\end{proof}

Note that $c^2_{x_1,-x_2} = 9$ and $c^2_{x_1,y_2} = c^2_{y_1,-x_2} = 1$. Furthermore, the following holds:

\begin{lemma}\label{lem:3c}
	We have that $c_{x_1,-x_2} = 3c_{x_1,y_2}c_{y_1,-x_2}$.
\end{lemma}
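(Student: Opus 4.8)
The plan is to read off Lemma~\ref{lem:3c} from the structure-constant identity of Lemma~\ref{lem:auxiliary}, applied to a single well-chosen quadruple of $G_2$ roots. Recall the relations $x_1 + y_2 = x_3$ and $x_2 - y_1 = x_3$, so that $x_1 + y_2 = -(y_1 - x_2)$; hence the four roots
\[
s := x_1, \qquad t := y_2, \qquad u := -x_2, \qquad v := y_1
\]
sum to zero. First I would check the non-degeneracy hypothesis of Lemma~\ref{lem:auxiliary}, namely that no two of $s,t,u,v$ have sum $0$: one computes $s+t = x_3$, $s+u = x_1 - x_2 = y_3$, $s+v = x_1 + y_1 = -2x_3$, $t+u = y_2 - x_2 = 2x_3$, $t+v = y_1 + y_2 = -y_3$, $u+v = y_1 - x_2 = -x_3$, none of which is zero. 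Lemma~\ref{lem:auxiliary} then gives
\[
\frac{c_{x_1,y_2}\,c_{-x_2,y_1}}{|x_1 + y_2|^2} + \frac{c_{y_2,-x_2}\,c_{x_1,y_1}}{|y_2 - x_2|^2} + \frac{c_{-x_2,x_1}\,c_{y_2,y_1}}{|x_1 - x_2|^2} = 0.
\]

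The next step is to simplify the three terms. The middle term vanishes because $y_2 - x_2 = 2x_3$ is not a root of $G_2$, so $c_{y_2,-x_2} = 0$ (equivalently $x_1 + y_1 = -2x_3$ is not a root, so $c_{x_1,y_1} = 0$). For the denominators, $|x_1 + y_2|^2 = |x_3|^2$ while $|x_1 - x_2|^2 = |y_3|^2 = 3\,|x_3|^2$, using that in $G_2$ a long root has squared length three times that of a short root. Finally, using the antisymmetry $c_{\alpha,\beta} = -c_{\beta,\alpha}$ (valid whenever $\alpha + \beta$ is a root) to rewrite $c_{-x_2,x_1} = -c_{x_1,-x_2}$, $c_{-x_2,y_1} = -c_{y_1,-x_2}$, and $c_{y_2,y_1} = -c_{y_1,y_2}$, together with the sign normalization $c_{y_1,y_2} = 1$ fixed just before the lemma (cf.~\eqref{eq:n123}), the displayed identity collapses to $\tfrac{1}{3}\,c_{x_1,-x_2} = c_{x_1,y_2}\,c_{y_1,-x_2}$, i.e. the claimed equality.

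The only genuine choice in this argument is the quadruple $(x_1, y_2, -x_2, y_1)$; once it is found, everything else is bookkeeping with the explicit $G_2$ root data already laid out in this subsection. So the main (and only mild) obstacle is spotting which four roots to feed into Lemma~\ref{lem:auxiliary} and verifying the "no two summing to $0$" condition for them. A secondary point to be careful about is that several of the structure constants occurring in the identity belong to pairs of roots whose sum is not a root, so the convention $c_{\alpha,\beta} = 0$ in that case must be invoked — but this only works in our favor, since it is exactly what kills the middle term.
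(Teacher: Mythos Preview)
Your proof is correct and follows essentially the same approach as the paper: both apply Lemma~\ref{lem:auxiliary} to the same four roots $x_1,\,y_2,\,-x_2,\,y_1$ (the paper orders them as $x_1,\,-x_2,\,y_1,\,y_2$, which merely cyclically permutes the three terms of the identity), kill one term because $x_1+y_1$ and $y_2-x_2$ are not roots, and use $|y_3|^2 = 3|x_3|^2$ together with the normalization $c_{y_1,y_2}=1$ to read off the factor of~$3$.
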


\begin{proof}
The four roots $x_1$, $-x_2$, $y_1$, $y_2$ satisfy the assumption of Lemma~\ref{lem:auxiliary}, i.e., $x_1 -x_2 + y_1 + y_2 = 0$ and no two of which sum to zero. Thus, 
$$
\frac{c_{x_1, -x_2}c_{y_1,y_2}}{|x_1-x_2|^2} + \frac{c_{-x_2,y_1}c_{x_1, y_2}}{|y_1-x_2|^2} + \frac{c_{y_1,x_1}c_{-x_2, y_2}}{|y_1 + x_1|^2} = 0.
$$
Since $y_1 + x_1$ and $y_2-x_2$ are not roots, $c_{y_1,x_1} = c_{-x_2, y_2} = 0$. Also, note  that $x_1 - x_2 = y_3$ and $y_1 - x_2 = -x_3$, so $|x_1 - x_2|^2 = 3|y_1 - x_2|^2$. 
Since $c_{y_1,y_2} = 1$, it follows that  
$c_{x_1,-x_2} = 3c_{x_1,y_2}c_{y_1,-x_2}$. 
\end{proof} 

In the sequel, we will let $\epsilon$ and $\epsilon'$ take values from the set $\{1,-1\}$. 
For convenience, we let $$p:= \epsilon c_{x_1, y_2}, \quad  p':= \epsilon' c_{y_1, -x_2}, \quad q:= \nicefrac{c_{x_1,-x_2}}{3}.$$ We treat $p$ and $p'$ as free variables which take value from the set $\{1,-1\}$. Then, by~\eqref{eq:munumunu} and Lemma~\ref{lem:3c}, the following hold for $\mu_i$ and $\nu_i$:
\begin{equation}\label{eq:munumunure}
\left\{
\begin{array}{lll}
\mu_1 & = & -\nu_1 \,\, = \,\, -(1 + 3pp'), \\
\mu_2 & = & \nu_2 \,\, = \,\, 1 - 3pp', \\
\mu_3 & = & p + p' - 2q pp', \\
\nu_3 & = & p + p' + 2q pp', \\
\mu_4 & = & - p + p' + 2q p p', \\
\nu_4 & = & p - p' + 2q pp'. 
\end{array}
\right. 
\end{equation} 
We obtain from the above expression all possible values of $\mu_i$ and $\nu_i$ by choosing different values of $p, p'\in \{1,-1\}$. Examination of these values, combined with~\eqref{eq:compforG2}, leads to the fact that $[\Psi_1, \Psi_2] \equiv \Psi_3 \cup \{Y_{y_3}, Z_{y_3}\}$.  \hfill{\qed}

\end{document}